\DeclareMathAlphabet{\mathpzc}{OT1}{pzc}{m}{it}
\numberwithin{equation}{section}
\title{System Level Synthesis}
\author{James Anderson, John C. Doyle, Steven Low, Nikolai Matni}
\date{\today}
\begin{document}
\maketitle

\begin{abstract}
This article surveys the \emph{System Level Synthesis} framework, which presents a novel perspective on constrained robust and optimal controller synthesis for linear systems.  We show how SLS shifts the controller synthesis task from the design of a controller to the design of the entire closed loop system, and highlight the benefits of this approach in terms of scalability and transparency.  We emphasize two particular applications of SLS, namely large-scale distributed optimal control and robust control.  In the case of distributed control, we show how SLS allows for localized controllers to be computed, extending robust and optimal control methods to large-scale systems under practical and realistic assumptions.  In the case of robust control, we show how SLS allows for novel design methodologies that, for the first time, quantify the degradation in performance of a robust controller due to model uncertainty -- such transparency is key in allowing robust control methods to interact, in a principled way, with modern techniques from machine learning and statistical inference.  Throughout, we emphasize practical and efficient computational solutions, and demonstrate our methods on easy to understand case studies.
\end{abstract}

\section{Introduction}
\label{sec:intro}
The systems we seek to design and control are becoming increasingly complex, be it in their dynamics, their scale, or their interaction with the environment.  To address this complexity, control methods must be able to accommodate the large-scale distributed nature of many of these systems (e.g.,  smart-grids, intelligent transportation systems, software defined networks).  But this in itself is often not sufficient, as certain components of the system, or the environment in which it must interact, are too difficult or impossible to model from first principles, thus motivating a principled integration of data-driven techniques.  While there has been a tremendous amount of work in each of these areas, usually targeting a specific problem setting, what is lacking is a unified framework that allows for all of these issues to be simultaneously addressed in a principled manner.  This paper summarizes our progress towards developing such a unified framework in the context of linear optimal control.  As we describe in more detail below, by moving from controller synthesis to \emph{System Level Synthesis}, wherein the closed loop behavior and properties of the system are directly optimized over, we are able to make significant improvements over the state of the art in terms of computational scalability, richness of problems that can be tackled, and the principled integration of models produced by data-driven methods, among other things.

We begin by providing a brief overview of relevant results from classical optimal control, robust control, and distributed optimal control.  We then highlight our main contributions, and end this section by providing a roadmap to the reader for the rest of the paper.  Our goal is to convey the core concepts of SLS to as broad an audience as possible, while also providing deep dives into specific domains (such as robust and distributed control), in hopes of appealing to both the interested graduate student (be they specializing in control theory, optimization, machine learning, or artificial intelligence), as well as the domain experts. 

\subsection{Prior Work}
\label{sec:prior}
\subsubsection{Classical Controller Parameterizations and Synthesis Results}
The foundation of many optimal controller synthesis procedures is a parameterization of all internally stabilizing controllers, and the responses that they achieve, over which relevant performance measures can be easily optimized.  \revsecond{For finite dimensional linear-time-invariant (LTI) systems, the class of internally stabilizing LTI feedback controllers is characterized by the celebrated Youla parameterization \cite{1976_Youla_parameterization} and the closely related factorization approach \cite{Factorization}. In \cite{1976_Youla_parameterization}, the authors showed that the Youla parameterization defines an isomorphism between a stabilizing controller and the resulting closed loop system response from sensors to actuators -- therefore rather than synthesizing the controller itself, this system response (or Youla parameter) could be directly optimized. This allowed for the incorporation of customized design specifications on the closed loop system into the controller design process via convex optimization \cite{Boyd_closed} or interpolation \cite{Dahleh}. Subsequently, analogous parameterizations of stabilizing controllers for more general classes of systems were developed: notable examples include the polynomial approach \cite{Polynomial} for generalized Rosenbrock systems \cite{Rosenbrock} and the behavioral approach \cite{Willems,Behavior_I, Behavior_II, Behavior_para} for linear differential systems. These results illustrate the power and generality of the Youla parameterization and factorization approaches to optimal control in the centralized setting.} Together with state-space methods, they played a major role in shifting controller synthesis from an ad hoc, loop-at-a-time tuning process to a principled one with well defined notions of optimality, and in the LTI setting, paved the way for the foundational results of robust and optimal control that would follow \cite{1989_DGKF}.

\subsubsection{Robust Control}
Building on these classical results, a rich line of work looked to explicitly account for modeling uncertainty in the design procedure.  In particular, systems were typically viewed as being composed of a nominal system estimate and a set of unknown but bounded model errors.
It was therefore necessary to ensure that the computed controller ensured stability
and performance guarantees for any such admissible realization.  When modeling errors to the nominal system are allowed to be arbitrary
norm-bounded linear time-invariant (LTI) operators in feedback with the nominal plant, traditional
small-gain theorems and robust synthesis techniques can be applied to exactly
solve the problem \cite{dahleh1994control,ZDGBook}.  However, when the
errors are known to have more structure there are more sophisticated techniques based on structured
singular values and corresponding $\mu$-synthesis techniques
\cite{doyle-ssv,mu-mixed,mu-complex,mu-real} or integral quadratic constraints
(IQCs) \cite{megretski1997system}.  

While theoretically appealing and much less conservative than traditional small-gain approaches,
the resulting synthesis methods are both computationally intractable (although
effective heuristics do exist) and difficult to interpret analytically.  In
particular, we know of no results in the literature that bound the degradation in performance of controlling an uncertain system in terms of the size of the perturbations affecting it.  While traditional results in robust control considered fixed model uncertainty, in the data-driven and learning based control setting, analytic interpretability of the effects uncertainty size are essential in obtaining sample-complexity bounds for these methods \cite{learning-lqr, dean18safe, dean18regret,boczar2018finite}.

\subsubsection{Distributed Control}
\rev{However, as control engineers shifted their attention from centralized to distributed optimal control, it was observed that the parameterization approaches that were so fruitful in the centralized setting were no longer directly applicable.} In contrast to centralized systems, modern cyber-physical systems (CPS) are large-scale, physically distributed, and interconnected.  Rather than a logically centralized controller, these systems are composed of several sub-controllers, each equipped with their own sensors and actuators -- these sub-controllers  exchange locally available information (such as sensor measurements or applied control actions) via a communication network.  \revsecond{These information sharing constraints make the corresponding distributed optimal controller synthesis problem challenging to solve \cite{1972_Ho_info,2012_Mahajan_Info_survey,2006_Rotkowitz_QI_TAC,2002_Bamieh_spatially_invariant,2005_Bamieh_spatially_invariant,2013_Nayyar_common_info}. In particular, imposing such structural constraints on the controller can lead to optimal control problems that are NP-hard \cite{1968_Witsenhausen_counterexample,1984_Tsitsiklis_NP_hard}.}

Despite these technical and conceptual challenges, a body of work \cite{2002_Bamieh_spatially_invariant, 2004_Dullerud, 2005_Bamieh_spatially_invariant, 2006_Rotkowitz_QI_TAC, 2012_Mahajan_Info_survey, 2013_Nayyar_common_info} that began in the early 2000s, and that culminated with the introduction of ``quadratic invariance'' (QI) in the seminal paper \cite{2006_Rotkowitz_QI_TAC}, showed that for a large class of practically relevant LTI systems, such internal structure could be integrated with the Youla parameterization and still preserve the convexity of the optimal controller synthesis task.  Informally, a system is quadratically invariant if sub-controllers are able to exchange information with each other faster than their control actions propagate through the CPS \cite{2010_Rotkowitz_QI_delay}.  Even more remarkable is that this condition is tight, in the sense that QI is a necessary \cite{2014_Lessard_convexity} and sufficient \cite{2006_Rotkowitz_QI_TAC} condition for subspace constraints (defined by, for example, communication delays) on the controller to be enforceable via convex constraints on the Youla parameter. \revsecond{The identification of QI triggered an explosion of results in distributed optimal controller synthesis \cite{2012_Lessard_two_player,2010_Shah_H2_poset,2013_Lamperski_H2,2013_Lessard_structure,2013_Scherer_Hinf,2014_Lessard_Hinf,2014_Matni_Hinf,2014_Tanaka_Triangular,2014_Lamperski_state} -- these results showed that the robust and optimal control methods that proved so powerful for centralized systems could be ported to the distributed setting.} \rev{As far as we are aware, no such results exist for the more general classes of systems considered in \cite{Polynomial,Willems,Behavior_I,Behavior_II,Behavior_para}.}

\rev{However, a fact that is not emphasized in the distributed optimal control literature is that distributed controllers are actually more complex to synthesize and implement than their centralized counterparts.\footnote{For example, see the solutions presented in \cite{2012_Lessard_two_player,2010_Shah_H2_poset,2013_Lamperski_H2,2013_Lessard_structure,2013_Scherer_Hinf,2014_Lessard_Hinf,2014_Matni_Hinf,2014_Tanaka_Triangular,2014_Lamperski_state} and the message passing implementation suggested in \cite{2014_Lamperski_state}.} } \rev{In particular, a major limitation of the QI framework is that, for strongly connected systems,\footnote{We say that a plant is strongly connected if the state of any subsystem can eventually alter the state of all other subsystems.} it cannot provide a convex characterization of localized controllers, in which local sub-controllers only access a subset of system-wide measurements. This need for global exchange of information between sub-controllers is a limiting factor in the scalability of the synthesis and implementation of these distributed optimal controllers.}

\subsection{Main Contributions}
Motivated by the issues raised above, we propose a  novel parameterization of internally stabilizing controllers and the closed loop responses that they achieve, providing an alternative approach to constrained optimal and robust controller synthesis. Specifically, rather than directly designing only the feedback loop between sensors and actuators, as in the Youla framework, we propose directly designing the entire closed loop response of the system, as captured by the maps from process and measurement disturbances to control actions and states.  As such, we call the proposed method a System Level Approach (SLA) to controller synthesis, which is composed of three elements: System Level Parameterizations (SLPs), System Level Constraints (SLCs) and System Level Synthesis (SLS) problems.  Further, in contrast to the QI framework, which seeks to impose structure on the input/output map between sensor measurements and control actions, the SLA imposes structural constraints on the system response itself, and shows that this structure carries over to the \emph{internal realization} of the corresponding controller.  It is this conceptual shift from structure on the input/output map to the internal realization of the controller that allows us to expand the class of structured controllers that admit a convex characterization, and in doing so, vastly increase the scalability of distributed optimal control methods.  Further, by moving our focus from controllers to system responses, we are able to make transparent the effects of modeling uncertainty and approximations on controller synthesis and performance.  We summarize our main contributions below. 

This paper summarizes, and in some places extends, our recent theoretical and computational contributions \cite{2014_Wang_ACC,2014_Wang_CDC,2014_Wang_Allerton,2015_Wang_H2,2015_Wang_Reg,2015_Wang_LDKF,WMD_ACC17, Virtual, SLStutorial, Thesis, anderson17, SysLevelSyn1,SysLevelSyn2, learning-lqr} to the area of constrained optimal and robust controller synthesis. In particular, we
\begin{itemize}
\item define and analyze the system level approach to controller synthesis, which is built around novel SLPs of all stabilizing controllers and the closed loop responses that they achieve;
\item in the state-feedback setting, we show that the above parameterizations are stable under small perturbations, leading to novel robust control problems that, among other features, allow us to bound performance degradation as a function of modeling uncertainty size;
\item show that SLPs allow us to constrain the closed loop response of the system to lie in arbitrary sets: we call such constraints on the system SLCs.  If these SLCs admit a convex representation, then the resulting set of constrained system responses admits a convex representation as well;
\item show that such constrained system responses can be used to directly implement a controller achieving them -- in particular, any SLC imposed on the system response imposes a corresponding SLC on the internal structure of the resulting controller;
\item show that the set of constrained stabilizing controllers that admit a convex parameterization using SLPs and SLCs is a strict superset of those that can be parameterized using quadratic invariance -- hence we provide a generalization of the QI framework, characterizing the broadest known class of constrained controllers that admit a convex parameterization;
\item formulate and analyze the SLS problem, which exploits SLPs and SLCs to define the broadest known class of constrained optimal control problems that can be solved using convex programming.  We show that the optimal control problems considered in the QI literature \cite{2012_Mahajan_Info_survey}, as well as the recently defined localized optimal control framework \cite{2015_Wang_H2} are all special cases of SLS problems;
\item show that if \emph{locality} constraints are imposed on the system response, and the SLCs admit a certain (partial) decomposability property, the computational complexity of controller synthesis and implementation can be made to scale as $O(1)$ relative the size of the full system, assuming sufficient parallel computation.
\end{itemize}

\subsection{Paper Structure}
{The paper is organized as follows. We begin with a gentle warmup in Section~\ref{sec:sys_resp_intro}, wherein we consider full-state-feedback problems over a finite time horizon -- by restricting ourselves to this simplified setting, we are able to build up nearly all of the core machinery of SLS with minimal technical overhead.  We show how several known optimal and robust control synthesis problems can be posed in the SLS framework, essentially turning (robust) optimal control problems into (robust) optimization problems over system responses. 

In Section~\ref{sec:LTIinfinite_horizon} we formally present the general optimal control problem formulation, and briefly review classical solution approaches. With this information in hand, the finite time horizon results of Section~\ref{sec:sys_resp_intro} are extended to the infinite horizon setting in Section~\ref{sec:state_feedback_SLS}. The notion of locality is introduced in Section~\ref{sec:locality} and then used to show how to decompose the synthesis problem into uncoupled subproblems of  reduced dimension in Section~\ref{sec:scalability}.  We end with a robust extension of the previous results, and show how it can be used to guarantee near-optimal performance of computationally tractable solutions to SLS problems.  Finally, in Section \ref{sec:output}, we extend these results to the output-feedback setting.  We end with conclusions in Section \ref{sec:conclusion}.
}

\section{Working with System Responses}\label{sec:sys_resp_intro}
To introduce the notion of a system response, and to illustrate some of its benefits, we begin by considering linear optimal control problems over a finite time horizon with full state-feedback.  We show how the corresponding controller synthesis task can be cast as an optimization over the closed loop behavior of the system, i.e., over system responses, as opposed to over the controller itself.  By initially restricting ourselves to a finite time horizon, we are not burdened by the technical overhead associated with system stability, controller internal stability, or infinite dimensional optimization problems.  In subsequent sections, we build upon the material presented here and extend these methods to infinite horizon problems, distributed control problems, and to the output-feedback setting.

\subsection{Finite-horizon System Level Synthesis}
\label{sec:time}
Consider the linear-time-varying (LTV) system
\begin{equation}
x_{t+1} = A_tx_t + B_tu_t + w_t,
\label{eq:dynamics}
\end{equation}
where $x_t \in \R^n$ is the state, $u_t \in \R^p$ is the control input, and $w_t \in \R^n$ is an exogenous disturbance process.  Our goal is to study the behavior of this system over the horizon $t=0,\dots,T$ when the control input $u_t$ is a causal LTV state-feedback controller, i.e., $u_t = K_t(x_0,x_1,\dots,x_t)$ for some linear map $K_t$.  To do so, we introduce the following signals and block-lower-triangular matrices:

\begin{equation}
\tf x = \begin{bmatrix} x_0 \\ x_1 \\ \vdots \\ x_{T} \end{bmatrix}, \ \tf u = \begin{bmatrix} u_0 \\ u_1 \\ \vdots \\ u_{T} \end{bmatrix} \ \tf w = \begin{bmatrix} x_0  \\ w_0 \\ w_1 \\ \vdots \\ w_{T-1}\end{bmatrix}, \ \K = \begin{bmatrix} K^{0,0} & & & \\ K^{1,1} & K^{1,0} & & \\ \vdots & \ddots & \ddots & \\  K^{T,T} & \cdots & K^{T,1} & K^{T,0} \end{bmatrix} \label{eq:definitions},
\end{equation}
where here $\K$ should be thought of as the matrix representation of the convolution operator induced by the linear-time-varying controller $K_t$ such that $u_t = \K^{t,\star} \tf x$, for $\K^{t,\star}$ the $t$-th row of $\K$.

Further, we let $Z$ be the block-downshift operator, i.e., a matrix with identity matrices along its first block sub-diagonal and zeros elsewhere, and define
\begin{equation*}
\A := \mathrm{blkdiag}\left(A_0,A_1,\dots,A_{T-1}, 0\right), \ \B := \mathrm{blkdiag}\left(B_0,B_1,\dots,B_{T-1},0\right).
\end{equation*}
Note that in the linear-time-invariant (LTI) setting, $\A$ and $\B$ are block diagonal matrices with the same matrix being repeated along the diagonal. Using these definitions allows us to compactly rewrite the behavior of the system \eqref{eq:dynamics} over the horizon $t=0,\dots,T$ as
\begin{equation}
\begin{array}{rcl}
\tf x &=& Z\A\tf x + Z \B \tf u + \tf w \\
&=& Z(\A+\B\K)\tf x + \tf w.
\end{array}
\end{equation}

In particular, the closed loop behavior of our system under the feedback law $\K$ can be entirely characterized by the following maps,
\begin{equation}
\begin{array}{rcl}
\tf x &=& (I - Z(\A+\B\K))^{-1}\tf w\\
\tf u &=& \K(I - Z(\A+\B\K))^{-1}\tf w,
\end{array}
\label{eq:full_responses}
\end{equation}
which describe the closed loop transfer functions from the exogenous disturbance $\tf w$ to the state $\tf x$ and control input $\tf u$, respectively.  These maps are what we call \emph{system responses}, and we now show how optimal controller synthesis can be performed by directly optimizing over these system responses, as opposed to the controller map $\tf K$ itself.

To that end, we introduce two additional lower-block-triangular operators
\begin{equation}
\Phix = \begin{bmatrix} \Phi_x^{0,0} & & & \\\Phi_x^{1,1} & \Phi_x^{1,0} & & \\ \vdots & \ddots & \ddots & \\  \Phi_x^{T,T} & \cdots & \Phi_x^{T,1} & \Phi_x^{T,0} \end{bmatrix}, \ \Phiu = \begin{bmatrix} \Phi_u^{0,0} & & & \\ \Phi_u^{1,1} & \Phi_u^{1,0} & & \\ \vdots & \ddots & \ddots & \\  \Phi_u^{T,T} & \cdots & \Phi_u^{T,1} & \Phi_u^{T,0} \end{bmatrix}.
\end{equation}
Here we use $\Phix$ to denote the system response describing the map from $\tf w \to \tf x$, and similarly, $\Phiu$ to denote the system response describing the map from $\tf w \to \tf u$ -- this allows us to compactly rewrite equation \eqref{eq:full_responses} as
\begin{equation}
\begin{array}{rcl}
\begin{bmatrix}\tf x \\ \tf u \end{bmatrix} &=& \begin{bmatrix}\Phix  \\ \Phiu \end{bmatrix} \tf w.
\label{eq:toep_resp}
\end{array}
\end{equation}

Our objective is to optimize directly over these system responses $\SFpair$: in order to do so, we must ensure that there exists a linear-time-varying controller $\K$ such that equations \eqref{eq:full_responses} and \eqref{eq:toep_resp} are equivalent.  The next theorem shows that enforcing the system responses to obey the system dynamics is necessary and sufficient to ensure the existence of such a corresponding controller, i.e., to ensure that the pair $\SFpair$ is \emph{achievable}.

\begin{theorem}\label{thm:finite_horizon}
Over the horizon $t=0,\hdots, T$, the  system dynamics \eqref{eq:dynamics}  with block-lower-triangular state feedback law $\K$ defining the control action as $\tf u = \tf K \tf x$, the following are true
\begin{enumerate}
\item the affine subspace defined by
\begin{equation}
\begin{bmatrix} I - Z\A & -Z\B \end{bmatrix}\begin{bmatrix} \Phix \\ \Phiu \end{bmatrix} = I,
\label{eq:subspace}
\end{equation}
parameterizes all possible system responses \eqref{eq:toep_resp}.
\item for any block-lower-triangular matrices $\SFpair$ satisfying \eqref{eq:subspace}, the controller $\tf K = \Phiu \Phix^{-1}$ achieves the desired response.
\end{enumerate}
\label{thm:ft-param}
\end{theorem}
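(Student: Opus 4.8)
The plan is to prove the two claims by relating the two representations of the closed-loop maps directly and then verifying achievability by construction. The key identity driving everything is that the open-loop dynamics written in stacked form give $\tf x = Z\A\tf x + Z\B\tf u + \tf w$, so that under any block-lower-triangular feedback $\tf u = \K\tf x$ we have $(I - Z(\A+\B\K))\tf x = \tf w$. Since $Z$ is strictly block-lower-triangular (the downshift), the matrix $Z(\A+\B\K)$ is strictly block-lower-triangular regardless of $\K$, and hence $I - Z(\A+\B\K)$ is block-lower-triangular with identity blocks on the diagonal. Such a matrix is always invertible, and its inverse is again block-lower-triangular with identity diagonal blocks; this is the technical fact I would isolate first, because it guarantees that the maps in \eqref{eq:full_responses} are well-defined with no stability or well-posedness caveats, which is precisely the payoff of working over a finite horizon.

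\textbf{Part 1 (necessity and sufficiency of the affine constraint).} First I would show that any achievable $\SFpair$ of the form \eqref{eq:full_responses} satisfies \eqref{eq:subspace}. Writing $\Phix = (I - Z(\A+\B\K))^{-1}$ and $\Phiu = \K\Phix$, I would left-multiply by $(I - Z\A)$ and substitute to get
\begin{equation*}
(I - Z\A)\Phix - Z\B\Phiu = (I - Z\A)\Phix - Z\B\K\Phix = (I - Z(\A+\B\K))\Phix = I,
\end{equation*}
which is exactly \eqref{eq:subspace}. This direction is a direct calculation. Conversely, I would show the affine constraint is not only necessary but parameterizes \emph{all} responses: given any block-lower-triangular pair satisfying \eqref{eq:subspace}, the construction in Part 2 produces a realizing controller, so every feasible point of the subspace is achievable, closing the loop between the two claims.

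\textbf{Part 2 (the realizing controller).} For the second claim, I would set $\K := \Phiu\Phix^{-1}$ and verify it reproduces the maps \eqref{eq:full_responses}. Because $\Phix$ inherits identity diagonal blocks from \eqref{eq:subspace} (the top-left block of the constraint forces $\Phi_x^{0,0} = I$, and the strictly-lower-triangular structure of $Z\A$ propagates this down the diagonal), $\Phix$ is invertible and $\K$ is a well-defined block-lower-triangular matrix; I would note that the product of a block-lower-triangular matrix with the inverse of a block-lower-triangular matrix with identity diagonal is again block-lower-triangular, so $\K$ respects the required causal structure. Substituting this $\K$ into the constraint \eqref{eq:subspace}, namely $(I-Z\A)\Phix - Z\B\Phiu = I$, and replacing $\Phiu = \K\Phix$ gives $(I - Z(\A+\B\K))\Phix = I$, i.e.\ $\Phix = (I - Z(\A+\B\K))^{-1}$, matching the first line of \eqref{eq:full_responses}; then $\Phiu = \K\Phix$ matches the second line.

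\textbf{The main obstacle} I expect is not any single algebraic step but the careful bookkeeping of the block-triangular structure: one must confirm that $\K = \Phiu\Phix^{-1}$ is genuinely causal (block-lower-triangular) rather than merely well-defined, since causality is what makes it an admissible state-feedback law, and one must verify that $\Phix$ is invertible for \emph{every} point of the affine subspace, not just those arising from a known controller. Both hinge on the same structural lemma --- that $I$ plus a strictly-block-lower-triangular matrix is invertible with block-lower-triangular inverse --- so I would state and prove that lemma once at the outset and invoke it in both parts. Everything else reduces to the substitution identities above.
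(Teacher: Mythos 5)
Your proposal is correct and follows essentially the same route as the paper's proof: Part 1 by the same direct substitution showing $(I-Z\A)\Phix - Z\B\Phiu = (I-Z(\A+\B\K))\Phix = I$, and Part 2 by observing that \eqref{eq:subspace} forces identity diagonal blocks on $\Phix$ (hence invertibility) and then verifying $\K = \Phiu\Phix^{-1}$ reproduces \eqref{eq:full_responses}. The only difference is cosmetic: you isolate the nilpotence/invertibility fact as a standalone lemma and explicitly check that $\K$ is block-lower-triangular (i.e.\ causal), a point the paper leaves implicit.
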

\begin{proof}
Proof of 1.: Let $\tf K$ be any block-lower-triangular operator, and $\tf u = \tf K \tf x$.  Then as argued above, we have that 
\begin{equation*}
\begin{array}{rcl}
\tf x &=& (I-Z(\A+\B\tf K))^{-1}\tf w \\
\tf u &=& \tf K(I-Z(\A+\B\tf K))^{-1}\tf w.
\end{array}
\end{equation*}
It is then easily seen that
\begin{equation*}
\begin{bmatrix} I - Z\A \ -Z\B\end{bmatrix}\begin{bmatrix}(I-Z(\A+\B\tf K))^{-1} \\ \tf K (I-Z(\A+\B\tf K))^{-1} \end{bmatrix} = (I - Z\A -Z\B\tf K)(I-Z(\A+\B\tf K))^{-1} = I.
\end{equation*}
Proof of 2.:  First notice that the affine constraint \eqref{eq:subspace} implies that $\Phix^{t,0} = I$ for $t=0,\dots,T$, and hence $\Phix^{-1}$ exists. Let $\tf K = \Phiu \Phix^{-1}$ and notice that
\begin{equation*}
\tf x = (I-Z(\A+\B\Phiu \Phix^{-1}))^{-1}\tf w.
\end{equation*}
But we then have that
\begin{equation*}
(I-Z(\A+\B\Phiu \Phix^{-1}))^{-1} = ((I-Z\A)\Phix - Z\B \Phiu)\Phix^{-1})^{-1} = \Phix ((I-Z\A)\tf \Phix - Z\B \Phiu)^{-1}  = \Phix
\end{equation*}
where the last equality follows form the fact that $\SFpair$ satisfy \eqref{eq:subspace}.  Similarly we have that
\begin{equation*}
\tf u = \Phiu \Phix^{-1}\tf x = \Phiu \Phix^{-1} \Phix \tf w = \Phiu \tf w,
\end{equation*}
where the second equality follows from the fact that $\tf x = \Phix \tf w$.
\end{proof}

At this point we emphasize that there is nothing ``distributed'' about the formulation or Theorem~\ref{thm:finite_horizon}. As we will see in later sections, the implications of Theorem~\ref{thm:finite_horizon} and its various generalizations will provide the building blocks for a unified and scalable distributed control framework.

Finally, before providing examples of how this parameterization can be used to formulate SLS problems, we make a small comment on controller realization.  Even in the finite horizon setting, where no issues relating internal stability arise, one may wish to avoid inverting the matrix $\Phix$ to compute the map $\tf K$ -- reasons may include numerical ill-condition and prohibitive computational cost for larger horizons.  We show here that one can instead implement this inverse using a particular feedback interconnection -- in particular, one can show that the following implementation:
 \begin{equation}
 \begin{array}{rcl}
 u_t & = & \sum_{\tau = 1}^T \Phi_u(t,\tau)\hat{w}_{t-\tau} \\
 \hat{x}_{t+1} & = & \sum_{\tau = 1}^{T-1}\Phi_x(t+1,\tau+1)\hat{w}_{t-\tau} \\
 \hat{w}_t & = & x_{t+1} - \hat{x}_{t+1},
 \end{array}
 \end{equation}
 leads to the desired mapping $\tf K = \Phiu\Phix^{-1}$ from $\tf x \to \tf u$.  An added benefit of this realization, which we exploit heavily in the sequel, is that any structure imposed on the system level response $\SFpair$ carries over directly to the internal structure of the controller realization, allowing a natural and transparent way to impose distributed constraints on the controller realization.

\subsection{System Level Synthesis}
The benefit of the previous parameterization is that the system behavior is now entirely defined in terms of \emph{affine} functions of the disturbance process (i.e., \eqref{eq:subspace}), and further to ensure achievability it suffices to constrain the maps $\SFpair$ to lie in an affine space.  We now show how this can be used to turn standard optimal control problems in to convex optimization problems.  The process of converting an optimal control problem to one over system responses consists of three steps:
\begin{enumerate}
\item Rewrite the problem using the vector notation \eqref{eq:definitions}.
\item Set $\tf x = \Phix \tf w$, and $\tf u = \Phiu \tf w$, and constrain $\SFpair$ to lie in the affine space \eqref{eq:subspace}.
\item Use knowledge about the disturbance process $\tf w$ to derive an appropriate objective function (see below for LQR, $\mathcal{H}_\infty$, and $\mathcal{L}_1$ optimal control).
\end{enumerate}
We now provide some worked examples of classical centralized control problem posed in the SLS framework.

\subsubsection{LQR with no driving noise and known $x_0$}\label{sec:LQRx0}
Here we let 
\begin{equation}
\tf w = \begin{bmatrix} x_0 \\ 0 \\ \vdots \\ 0\end{bmatrix}.
\end{equation}

For simplicity of notation, we assume that $u_{T}$ is included in the cost functional, i.e., that the optimal control problem that we seek to solve is given by
\begin{equation}
\begin{array}{rl}
\min_{x_t,u_t} & \sum_{t=0}^{T} x_t^\tp Q_t x_t + u_t^\tp R_tu_t\\
\st & x_{t+1} = Ax_t + Bu_t, \ t = 0, \dots, T-1, \\
& x_0 \text{ known}.
\end{array}
\label{eq:lqr_toep}
\end{equation}

Using vector notation, this problem can be written as 
\begin{equation}
\begin{array}{rl}
\min_{\tf x,\tf u} & \bignorm{\begin{bmatrix} \Qq^{\frac{1}{2}} & 0 \\ 0 & \Rr^{\frac{1}{2}}\end{bmatrix}\begin{bmatrix}\tf x \\ \tf u\end{bmatrix}}_F^2\\
\st & \tf x = Z\A\tf x + Z\B\tf u,
\end{array}
\label{eq:lqr_toep}
\end{equation}
where
\begin{equation}
\Qq := \mathrm{blkdiag}(Q_0,Q_1, \dots, Q_{T}), \ \Rr := \mathrm{blkdiag}(R_0,R_1, \dots, R_{T}).
\end{equation}

Now, we use the fact that $\tf x = \Phix \tf w = \Phix(:,0)x_0$ and $\tf u = \Phiu \tf w = \Phiu(:,0)x_0$, where we use $\tf\Phi(:,0)$ to denote the first block column of a matrix $\tf\Phi$, and Theorem \ref{thm:ft-param}, to cast this problem as an optimization over system responses:
\begin{equation}
\begin{array}{rl}
\min_{\Phix,\Phiu} & \bignorm{\begin{bmatrix} \Qq^{\frac{1}{2}} & 0 \\ 0 & \Rr^{\frac{1}{2}}\end{bmatrix}\begin{bmatrix}\Phix(:,0) \\ \Phiu(:,0)\end{bmatrix}x_0}_F^2 \\
\st & \begin{bmatrix} I - Z\A & -Z\B \end{bmatrix}\begin{bmatrix} \Phix(:,0) \\ \Phiu(:,0) \end{bmatrix} = I,
\end{array}
\label{eq:lqr_compact}
\end{equation}
which can be expressed as a convex quadratic program.

\subsubsection{LQR with driving noise and random $x_0$}
\label{sec:lqr-noise}
Here we let $\tf w \iid \mathcal{N}(0,\Sigma_w)$.  For simplicity of notation, we assume that $u_{T}$ is included in the cost functional, i.e., that the optimal control problem that we seek to solve is given by
\begin{equation}
\begin{array}{rl}
\min_{x_t,u_t} & \sum_{t=0}^{T} \E\left[x_t^\tp Q_t x_t + u_t^\tp R_tu_t\right] \\
\st & x_{t+1} = Ax_t + Bu_t + w_t.
\end{array}
\label{eq:lqr_toep}
\end{equation}

Following the same steps as the previous subsection, this problem can be rewritten as
\begin{equation}
\begin{array}{rl}
\min_{\Phix,\Phiu} & \bignorm{\begin{bmatrix} \Qq^{\frac{1}{2}} & 0 \\ 0 & \Rr^{\frac{1}{2}}\end{bmatrix}\begin{bmatrix}\Phix \\ \Phiu\end{bmatrix}\Sigma_w^{1/2}}_F^2 \\
\st & \begin{bmatrix} I - Z\A & -Z\B \end{bmatrix}\begin{bmatrix} \Phix \\ \Phiu \end{bmatrix} = I,
\end{array}
\label{eq:lqr_compact}
\end{equation}
where $\Qq$ and $\Rr$ are as defined above. Once again, this problem can be expressed as a convex quadratic program.

\subsubsection{$\mathcal{H}_\infty$ optimal control}
Consider the following $\mathcal{H}_\infty$ optimal control problem
\begin{equation}
\begin{array}{rl}
\min_{\tf x,\tf u} \max_{\norm{\tf w}_{2}\leq 1}&  \sum_{t=0}^T x_t^\top Q_tx_t + u_t^\top R_tu_t\\
\st & x_{t+1} = Ax_t + Bu_t + w_t.
\end{array}
\label{eq:hinf0}
\end{equation},

which can be rewritten in vector notation as

\begin{equation}
\begin{array}{rl}
\min_{\tf x ,\tf u} \max_{\norm{\tf w}_2\leq 1}&  \bignorm{\begin{bmatrix} \Qq^{\frac{1}{2}} & 0 \\ 0 & \Rr^{\frac{1}{2}}\end{bmatrix}\begin{bmatrix}\tf x \\ \tf u\end{bmatrix}}^2_2\\
\st & \tf x = Z\A\tf x + Z\B\tf u + \tf w.
\end{array}
\label{eq:hinf1}
\end{equation}

Now, using that $\tf x = \Phix \tf w$ and $\tf u = \Phiu \tf w$, we can rewrite our control problem in terms of system responses as follows

\begin{equation}
\begin{array}{rl}
\min_{\Phix ,\Phiu} \max_{\norm{\tf w}_2\leq 1}&  \bignorm{\begin{bmatrix} \Qq^{\frac{1}{2}} & 0 \\ 0 & \Rr^{\frac{1}{2}}\end{bmatrix}\begin{bmatrix}\Phix \\ \Phiu \end{bmatrix}\tf w}^2_2\\
\st & \begin{bmatrix} I - Z\A & -Z\B \end{bmatrix}\begin{bmatrix} \Phix \\ \Phiu \end{bmatrix} = I.
\end{array}
\label{eq:hinf1}
\end{equation}

Now, it suffices to notice that the inner maximization over $\tf w$ is simply the definition of the spectral norm of the weighted system responses, leading to the final optimization problem over $\SFpair$ 

\begin{equation}
\begin{array}{rl}
\min_{\Phix ,\Phiu} &  \bignorm{\begin{bmatrix} \Qq^{\frac{1}{2}} & 0 \\ 0 & \Rr^{\frac{1}{2}}\end{bmatrix}\begin{bmatrix}\Phix \\ \Phiu \end{bmatrix}}^2_{2\to 2}\\
\st & \begin{bmatrix} I - Z\A & -Z\B \end{bmatrix}\begin{bmatrix} \Phix \\ \Phiu \end{bmatrix} = I,
\end{array}
\label{eq:hinf}
\end{equation}

which can be expressed as a semidefinite program.
\subsubsection{$\mathcal{L}_1$ optimal control}\label{sec:L1opt}
The previous subsection optimized the worst case $\ell_2\to \ell_2$ gain of our system -- a similar derivation can be used to obtain a problem that optimizes the worst case $\ell_\infty \to \ell_\infty$ gain of our system.  In particular, we skip a step and simply begin with the following problem in vector notation
\begin{equation}
\begin{array}{rl}
\min_{\tf x ,\tf u} \max_{\norm{\tf w}_\infty\leq 1}&  \bignorm{\begin{bmatrix} \Qq^{\frac{1}{2}} & 0 \\ 0 & \Rr^{\frac{1}{2}}\end{bmatrix}\begin{bmatrix}\tf x \\ \tf u\end{bmatrix}}_\infty\\
\st & \tf x = Z\A\tf x + Z\B\tf u + \tf w.
\end{array}
\label{eq:linf1}
\end{equation}

Once again, using that $\tf x = \Phix \tf w$ and $\tf u = \Phiu \tf w$, we can rewrite our control problem in terms of system responses as follows

\begin{equation}
\begin{array}{rl}
\min_{\Phix ,\Phiu} \max_{\norm{\tf w}_\infty\leq 1}&  \bignorm{\begin{bmatrix} \Qq^{\frac{1}{2}} & 0 \\ 0 & \Rr^{\frac{1}{2}}\end{bmatrix}\begin{bmatrix}\Phix \\ \Phiu \end{bmatrix}\tf w}_\infty\\
\st & \begin{bmatrix} I - Z\A & -Z\B \end{bmatrix}\begin{bmatrix} \Phix \\ \Phiu \end{bmatrix} = I.
\end{array}
\label{eq:hinf1}
\end{equation}

Now, it suffices to notice that the inner maximization over $\tf w$ is simply the definition of the $\mathcal{L}_1$ norm (i.e., the matrix $\ell_\infty \to \ell_\infty$ induced norm) of the weighted system responses, leading to the final optimization problem over $\SFpair$ 

\begin{equation}
\begin{array}{rl}
\min_{\Phix ,\Phiu} &  \bignorm{\begin{bmatrix} \Qq^{\frac{1}{2}} & 0 \\ 0 & \Rr^{\frac{1}{2}}\end{bmatrix}\begin{bmatrix}\Phix \\ \Phiu \end{bmatrix}}_{\infty\to\infty}\\
\st & \begin{bmatrix} I - Z\A & -Z\B \end{bmatrix}\begin{bmatrix} \Phix \\ \Phiu \end{bmatrix} = I,
\end{array}
\label{eq:hinf}
\end{equation}
which can be expressed as a linear program.

\subsubsection{General Constrained Optimal Control}
The four example problems described above can be viewed as specific instantiations of a generic problem structure. They each contain the System Level Parameterization (SLP)~\eqref{eq:subspace} as a constraint, and the cost-function is some appropriately function of the system responses $\SFpair$.  Although not included in the examples above, we could also additionally impose that the system responses lie in some sets, i.e., that $\SFpair \in \Ss$, or that the state and input trajectories also lie in some sets, i.e., that  $\Phix\tf w \in \mathcal X$ and $\Phiu \tf w \in \mathcal U$ for all $\tf w \in \mathcal W$. Recent work in this direction is presented in~\cite{CheA19, DeaTMR18}.

Combining all of these elements together, we obtain a System Level Synthesis (SLS) problem:
\begin{subequations}\label{eq:SLSsyn}
\begin{align}
 \min_{\Phix, \Phiu}\quad &  g(\Phix,\Phiu)  \label{eq:cost_SLS}\\
\st \quad &  \begin{bmatrix} I - Z\A & -Z\B \end{bmatrix}\begin{bmatrix} \Phix \\ \Phiu \end{bmatrix} = I \label{eq:subspace_SLS} \\
\quad & \SFpair \in \Ss,  \\
\quad & \Phix \tf w \in \mathcal X, \, \Phiu \tf w \in \mathcal U \ \text{for all $\tf w \in \mathcal W$}. \label{eq:SLCfullmpc}
\end{align}
\end{subequations}
Clearly, if the objective functions and constraints are chosen to be convex functions, then the SLS problem is itself convex, as is the case for all of the examples given above. 

To reiterate, the SLP is the affine constraint~\eqref{eq:subspace_SLS} that ensures the system is achievable over the horizon $t=0,\dots,T$, i.e. it ensures that there exists a controller $\tf K$ such that system response $\SFpair$ described by~\eqref{eq:toep_resp} and the closed-loops maps $\tf w \rightarrow \tf x$ and $\tf w \rightarrow \tf u$ as given by~\eqref{eq:full_responses} are equivalent. System Level Constraints (SLCs) modeled by~\eqref{eq:SLCfullmpc} force the closed loop response to lie in a pre-described set $\Ss$. Typical choices for additional constraints $\Ss$ imposed on the system responses would include additional performance requirements (resulting in multi-objective control problems,) and structural constraints (as imposed by distributed constraints), or sparsity surrogate constraints (as imposed by architectural constraints).  In Section~\ref{sec:locality} we will describe a particularly useful class of SLCs that define spatiotemporal locality constraints. 
Additionally, the constraints $\Phix\tf w \in \mathcal X$ and $\Phiu \tf w \in \mathcal U$ can be used to incorporate state and input constraints.  Typical choices for constraints $\mathcal X \times \mathcal U$ on the state and input trajectories include, for example, polytopic or ellipsoidal constraints.   At this point, the connection between the finite time horizon SLS problem \eqref{eq:SLSsyn} and the subproblems solved in model predictive control should be apparent.



\subsection{A new perspective on robustness}
\label{sec:robust-time}
Thus far, we have only shown that SLS can be used to cast standard optimal control problems as optimizations over system responses.  As a preview of some of the additional benefits of working with system responses, we present here a novel robust control synthesis method that allows us to bound the performance degradation incurred by using approximate system responses that do not exactly satisfy the achievability constraints \eqref{eq:subspace}.  In later sections, we extend these robustness results to the infinite horizon setting.

Our first result is a robust variant of Theorem \ref{thm:finite_horizon}, which characterizes the behavior achieved by a controller built from approximate system responses.  Before stating the result, we let
\begin{equation}
\tf \Delta = \begin{bmatrix} \Delta^{0,0} & & & \\ \Delta^{1,1} & \Delta^{1,0} & & \\ \vdots & \ddots & \ddots & \\  \Delta^{T,T} & \cdots & \Delta^{T,1} & \Delta^{T,0 }\end{bmatrix}
\label{eq:delta}
\end{equation}
be an arbitrary block-lower-triangular matrix.

\begin{theorem}
Let $\tf \Delta$ be defined as in equation \eqref{eq:delta}, and suppose that  $\{\Phix, \Phiu\}$ satisfy
\begin{equation}
\begin{bmatrix}I - Z\A & -Z\B \end{bmatrix}\begin{bmatrix} \Phix \\ \Phiu \end{bmatrix} = I + \tf \Delta.
\label{eq:robust-param}
\end{equation}

If $(I+\Delta^{i,0})^{-1}$ exists for $i = 1,\dots,T$, then the controller $\tf K = \Phiu \Phix^{-1}$ achieves the system response
\begin{equation}
\begin{bmatrix} \tf x \\ \tf u \end{bmatrix} = \begin{bmatrix}
\Phix \\ \Phiu \end{bmatrix} (I + \tf \Delta)^{-1}\tf w
\label{eq:robust_response}
\end{equation}
\label{thm:ft-robust}
\end{theorem}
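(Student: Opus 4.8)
The plan is to follow the proof of part~2 of Theorem~\ref{thm:ft-param} essentially verbatim, tracking how the perturbation $\tf\Delta$ propagates through the algebra. The only structural change relative to that proof is that the achievability identity now reads $(I-Z\A)\Phix - Z\B\Phiu = I + \tf\Delta$ rather than $=I$, so I expect the factor $(I+\tf\Delta)^{-1}$ to appear exactly where an identity sat before. Concretely I would (i) use the hypothesis to establish that $\Phix$ and $I+\tf\Delta$ are both invertible, (ii) substitute $\tf K = \Phiu\Phix^{-1}$ into the closed-loop map $\tf x = (I - Z(\A+\B\tf K))^{-1}\tf w$, (iii) simplify the resulting expression using \eqref{eq:robust-param}, and (iv) read off $\tf x$ and then $\tf u = \tf K\tf x$.

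For the invertibility step, I would read off the block diagonal of \eqref{eq:robust-param}. Since $Z$ is the strict block-downshift, both $Z\A$ and $Z\B$ are strictly block-lower-triangular, so $I - Z\A$ has identity diagonal blocks and $-Z\B$ has zero diagonal blocks. Comparing diagonal blocks on both sides therefore gives $\Phi_x^{i,0} = I + \Delta^{i,0}$. Since $\Phix$ and $I + \tf\Delta$ are both block-lower-triangular with exactly these diagonal blocks, and a block-lower-triangular matrix is invertible iff its diagonal blocks are, the hypothesis that each $(I+\Delta^{i,0})^{-1}$ exists makes both $\Phix^{-1}$ and $(I+\tf\Delta)^{-1}$ well defined. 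I would also note in passing that $(I - Z(\A+\B\tf K))^{-1}$ always exists, because $Z(\A+\B\tf K)$ is strictly block-lower-triangular and hence nilpotent, so the closed-loop map is well defined regardless of $\tf\Delta$.

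The algebraic core is then short. Factoring $\Phix^{-1}$ out to the right,
\begin{equation*}
I - Z(\A + \B\Phiu\Phix^{-1}) = \big((I-Z\A)\Phix - Z\B\Phiu\big)\Phix^{-1} = (I + \tf\Delta)\Phix^{-1},
\end{equation*}
where the last equality is precisely \eqref{eq:robust-param}. Inverting both sides (legitimate by the previous step) yields $\tf x = (I - Z(\A+\B\tf K))^{-1}\tf w = \Phix(I+\tf\Delta)^{-1}\tf w$, and then $\tf u = \tf K\tf x = \Phiu\Phix^{-1}\Phix(I+\tf\Delta)^{-1}\tf w = \Phiu(I+\tf\Delta)^{-1}\tf w$, which together are exactly \eqref{eq:robust_response}.

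I do not expect a serious obstacle: given Theorem~\ref{thm:ft-param} the computation is a one-line perturbation of the existing argument. The only places demanding care are bookkeeping rather than ideas --- keeping the noncommutative products in the correct order when factoring out $\Phix^{-1}$, and verifying that the stated hypothesis (invertibility of the diagonal blocks $I+\Delta^{i,0}$) is exactly what underwrites invertibility of both $\Phix$ and $I+\tf\Delta$. I would also double-check the index range in the hypothesis against the diagonal-block computation, since the $i=0$ block $\Phi_x^{0,0} = I + \Delta^{0,0}$ appears on equal footing with the others and likewise needs to be invertible for $\Phix^{-1}$ to exist.
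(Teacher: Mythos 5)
Your proof is correct and rests on exactly the same algebra as the paper's; the only difference is packaging, since the paper right-multiplies \eqref{eq:robust-param} by $(I+\tf \Delta)^{-1}$ so that the rescaled pair $\{\Phix(I+\tf \Delta)^{-1},\,\Phiu(I+\tf \Delta)^{-1}\}$ satisfies the unperturbed constraint \eqref{eq:subspace} and generates the same controller $\tf K=\Phiu\Phix^{-1}$, then invokes Theorem~\ref{thm:finite_horizon} as a black box, whereas you inline that theorem's proof via the factorization $I - Z(\A+\B\tf K) = (I+\tf \Delta)\Phix^{-1}$. Your route has the minor merit of making the invertibility bookkeeping explicit through the diagonal-block identity $\Phi_x^{i,0} = I + \Delta^{i,0}$, which the paper leaves implicit. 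Your closing remark is also a genuine catch: the hypothesis as stated (and repeated verbatim in the paper's proof) runs only over $i=1,\dots,T$, yet the $i=0$ diagonal block $I+\Delta^{0,0}$ is needed on equal footing for $\Phix^{-1}$ and $(I+\tf \Delta)^{-1}$ to exist; this slip is immaterial in the paper's subsequent application, where $\tf \Delta$ carries a leading factor of $Z$ and hence has vanishing diagonal blocks, but your version of the hypothesis is the right one.
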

\begin{proof}
If $(I+\Delta^{i,0})^{-1}$ exists for $i=1,\dots,T$, then so does $(I + \tf \Delta)^{-1}$.  Therefore the constraint \eqref{eq:robust-param} is equivalent to
\begin{equation}
\begin{bmatrix}I - Z\A & -Z\B \end{bmatrix}\begin{bmatrix} \Phix \\ \Phiu \end{bmatrix}(I + \tf \Delta)^{-1} = I.
\end{equation}

Then noting that $\tf K = \Phiu \Phix^{-1} = \Phiu (I + \tf \Delta)^{-1} (\Phix (I + \tf \Delta)^{-1})^{-1} $, we have, by Theorem \ref{thm:finite_horizon}, that the controller $\tf K =  \Phiu \Phix^{-1} $ achieves the system responses \eqref{eq:robust_response}.
\end{proof}

We now present a simple example of how this theorem can be used to compute a controller that is robust to parametric uncertainty in the system dynamics $(\A,\B)$ -- we return to this theorem and its infinite horizon analog later in the text, at which point we present several other use-cases.

Suppose that rather than having access to the true system dynamics $(\A,\B)$, we instead only have access to estimates $(\Aahat,\Bbhat)$, and further let $\{\Phixh, \Phiuh\}$ satisfy the achievability constraints defined by $(\Aahat,\Bbhat)$.  Then simple algebra shows that the approximate system responses $\{\Phixh, \Phiuh\}$ satisfy
\begin{equation}
\begin{bmatrix} I - Z \A & -Z \B \end{bmatrix}\begin{bmatrix} \Phixh \\ \Phiuh \end{bmatrix} = I + Z\begin{bmatrix}\DA & \DB \end{bmatrix}\begin{bmatrix} \Phixh \\ \Phiuh \end{bmatrix},
\end{equation}
where $\DA:= \Aahat- \A$ and $\DB := \Bbhat - \B$.  We can then invoke Theorem \ref{thm:ft-robust} with
\[
\tf \Delta := Z\begin{bmatrix}\DA & \DB \end{bmatrix}\begin{bmatrix} \Phixh \\ \Phiuh \end{bmatrix} \]
to conclude that the controller $\hat\K = \Phiuh\Phixh^{-1}$, computed using only the system estimates $(\Aahat,\Bbhat)$, achieves the following response on the actual system $(\A,\B)$:
\[
\begin{bmatrix} \tf x \\ \tf u \end{bmatrix} = \begin{bmatrix} \Phixh \\ \Phiuh \end{bmatrix} \left(I + \tf\Delta\right)^{-1}\tf w.
\]

If we further suppose that we have known bounds on our modeling error, i.e., that $\norm{\DA}_{2\to 2} \leq \epsilon_A$ and $\norm{\DB}_{2 \to 2} \leq \epsilon_B$, then we can use this insight to formulate a robust SLS problem. Recall that for matrices, the induced $2\to 2$ norm is simply the maximum singular value. For illustrative purposes, we formulate a robust LQR problem, but the general approach is applicable to any performance metrics that have a submultiplicative property.  In particular, consider the problem presented in Section \ref{sec:lqr-noise}, and assume for simplicity that $\Sigma_w = I$.  Then, assuming we only have access to the approximate dynamics $(\Aahat,\Bbhat)$, we can pose the following robust optimization problem

\begin{equation}
\begin{array}{rl}
\displaystyle\min_{\Phix,\Phiu} \max_{\norm{\DA}\leq \epsilon_A, \ \norm{\DB}\leq \epsilon_B}& \bignorm{\begin{bmatrix} \Qq^{\frac{1}{2}} & 0 \\ 0 & \Rr^{\frac{1}{2}}\end{bmatrix}\begin{bmatrix}\Phix \\ \Phiu\end{bmatrix}\left(I + \tf \Delta \right)^{-1} }_F^2 \\
\st & \begin{bmatrix} I - Z\Aahat & -Z\Bbhat \end{bmatrix}\begin{bmatrix} \Phix \\ \Phiu \end{bmatrix} = I.
\end{array}
\label{eq:robust-nc}
\end{equation}

By considering a performance metric that maximizes over the admissible model uncertainty, we are able to provide robust performance guarantees over the uncertainty set.  However, as posed, this problem is non-convex and does not admit a computationally efficient solution.  

We can however exploit the structure of the problem to derive a tractable upper bound.  In particular, notice that the effect of the model uncertainty, as captured by the $(I + \tf \Delta)^{-1}$ term, can be isolated by applying the submultiplicative property of the spectral and frobenius norms, and the strictly-lower-block-triangular structure of $\tf \Delta$ can be further exploited to compute an upper bound on its effect.  Specifically, we have that
\begin{align*}
\bignorm{\begin{bmatrix} \Qq^{\frac{1}{2}} & 0 \\ 0 & \Rr^{\frac{1}{2}}\end{bmatrix}\begin{bmatrix}\Phix \\ \Phiu\end{bmatrix}\left(I + \tf \Delta \right)^{-1} }_F^2 &\leq \bignorm{\begin{bmatrix} \Qq^{\frac{1}{2}} & 0 \\ 0 & \Rr^{\frac{1}{2}}\end{bmatrix}\begin{bmatrix}\Phix \\ \Phiu\end{bmatrix}}_F^2\bignorm{\left(I + \tf \Delta \right)^{-1} }_{2\to 2}^2\\
&\leq \bignorm{\begin{bmatrix} \Qq^{\frac{1}{2}} & 0 \\ 0 & \Rr^{\frac{1}{2}}\end{bmatrix}\begin{bmatrix}\Phix \\ \Phiu\end{bmatrix}}_F^2\left(\sum_{t=0}^{T-1}\bignorm{\tf \Delta }_{2\to2}^{t}\right)^2,
\end{align*}
where the first inequality follows from the submultiplicative property of the spectral and Frobenius norms, and the second inequality by noticing that because $\tf \Delta$ is strictly-lower-block-triangular, the inverse can be expressed as a finite series and applying the submultiplicative property and the triangle inequality.

Finally, we note that using Proposition 3.5 from \cite{learning-lqr}, we can upper bound the spectral norm of $\tf \Delta$ as follows:
\begin{align*}
\bignorm{\tf \Delta}_{2\to 2} = \bignorm{Z\begin{bmatrix}\DA & \DB \end{bmatrix}\begin{bmatrix} \Phixh \\ \Phiuh \end{bmatrix}} _{2\to 2}\leq \sqrt{2}\bignorm{\begin{matrix} \epsilon_A \Phix \\ \epsilon_B \Phiu\end{matrix}}_{2\to 2}.
\end{align*}

Leveraging these two inequalities, we can now write a tractable (quasi-convex) optimization problem that provides an upper-bound to our original robust optimization problem \eqref{eq:robust-nc}.

\begin{equation}
\begin{array}{rl}
\displaystyle\min_{\tau}\left(\sum_{t=0}^{T-1}\tau^t\right)^2 \min_{\Phix,\Phiu} & \bignorm{\begin{bmatrix} \Qq^{\frac{1}{2}} & 0 \\ 0 & \Rr^{\frac{1}{2}}\end{bmatrix}\begin{bmatrix}\Phix \\ \Phiu\end{bmatrix} }_F^2 \\
\st & \begin{bmatrix} I - Z\Aahat & -Z\Bbhat \end{bmatrix}\begin{bmatrix} \Phix \\ \Phiu \end{bmatrix} = I \\
& \sqrt{2}\bignorm{\begin{matrix} \epsilon_A \Phix \\ \epsilon_B \Phiu\end{matrix}}_{2\to 2} \leq \tau.
\end{array}
\label{eq:robust}
\end{equation}

Thus, by leveraging the transparent mapping of system uncertainty to system performance, we were able to derive a tractable upper bound that produces a controller that is both guaranteed to be robustly stabilizing and performant.  Further, as we show in Section \ref{sec:robust}, this transparency also allows us to derive bounds on performance degradation as a function of the uncertainty sizes $\epsilon_A$ and $\epsilon_B$.  Although we defer presenting this result to the infinite horizon setting, as the computations become less cumbersome in that case, we note that analogous, albeit messier, degradation bounds can be derived for the robust optimization problem \eqref{eq:robust}.

\section{Preliminaries and Notation}\label{sec:LTIinfinite_horizon}
We now formally introduce the infinite horizon optimal control problems that we solve in this paper.

\subsection{Notation}
We use lower and upper case Latin letters such as $x$ and $A$ to denote vectors and matrices, respectively, and lower and upper case boldface Latin letters such as $\tf x$ and $\tf G$ to denote signals and transfer matrices, respectively.  We use calligraphic letters such as $\s$ to denote sets. In the interest of clarity, we work with discrete time linear time invariant systems, but unless stated otherwise, all results extend naturally to the continuous time setting.  We use standard definitions of the Hardy spaces $\mathcal{H}_2$ and $\mathcal{H}_\infty$, and denote their restriction to the set of real-rational proper transfer matrices by $\mathcal{RH}_2$ and $\RHinf$.  We use $G(i)$ to denote the $i$th spectral component of a transfer function $\tf G$, i.e., $\tf G(z) = \sum_{i=0}^{\infty} \frac{1}{z^i} G(i)$ for $| z | > 1$.    We use $\FT$ to denote the space of finite impulse response (FIR) transfer matrices with horizon $T$, i.e., $\FT := \{ \tf G \in \RHinf \, | \, \tf G = \sum_{i=0}^T\frac{1}{z^i}G(i)\}$. We frequently use the notation $\tf G \in \frac{1}{z}\RHinf$ to denote that $\tf G$ is strictly proper. Informally this can be parsed as $z\tf G \in \RHinf$. Finally, we use $\tf G(T_1:T_2)$ to denote the projection of $\tf G$ onto $\mathcal{F}_{T_1}^\perp \cap \mathcal{F}_{T_2}$, i.e., $\tf G(T_1:T_2) = \sum_{i=T_1}^{T_2}\frac{1}{z^i}G(i)$. 

\subsection{System Model}
We consider discrete time linear time invariant (LTI) systems of the form
\begin{subequations} \label{eq:sys-dynamics}
\begin{align}
x[t+1] &= A x[t] + B_1 w[t] + B_2 u[t] \label{eq:sys_x} \\
\bar{z}[t] &= C_1 x[t] + D_{11} w[t] + D_{12} u[t] \label{eq:sys_z} \\
y[t] &= C_2 x[t] + D_{21} w[t] + D_{22} u[t] \label{eq:sys_y}
\end{align}
\end{subequations}
where $x$, $u$, $w$, $y$, $\bar{z}$ are the state vector, control action, external disturbance, measurement, and regulated output, respectively. 
Equation \eqref{eq:sys-dynamics} can be written in state space form as
\begin{equation}
\tf P = \left[ \begin{array}{c|cc} A & B_1 & B_2 \\ \hline C_1 & D_{11} & D_{12} \\ C_2 & D_{21} & D_{22} \end{array} \right] = \begin{bmatrix} \tf P_{11} & \tf P_{12} \\ \tf P_{21} & \tf P_{22} \end{bmatrix} \nonumber
\end{equation}
where $\tf P_{ij} = C_i(zI-A)^{-1}B_j + D_{ij}$. We refer to $\tf P$ as the open loop plant model. 

Consider a dynamic output feedback control law $\tf u = \tf K \tf y$. The controller $\tf K$ is assumed to have the state space realization
\begin{subequations} \label{eq:Kss}
\begin{align}
\xi[t+1] &= A_k \xi[t] + B_k y[t] \label{eq:Kss1} \\
u[t] &= C_k \xi[t] + D_k y[t], \label{eq:Kss2}
\end{align}
\end{subequations}
where $\xi$ is the internal state of the controller. We have $\tf K = C_k(zI-A_k)^{-1}B_k + D_k$. A schematic diagram of the interconnection of the plant $\tf P$ and the controller $\tf K$ is shown in Figure \ref{fig:pk}.

\begin{figure}[h]
      \centering
      \includegraphics[width=0.25\textwidth]{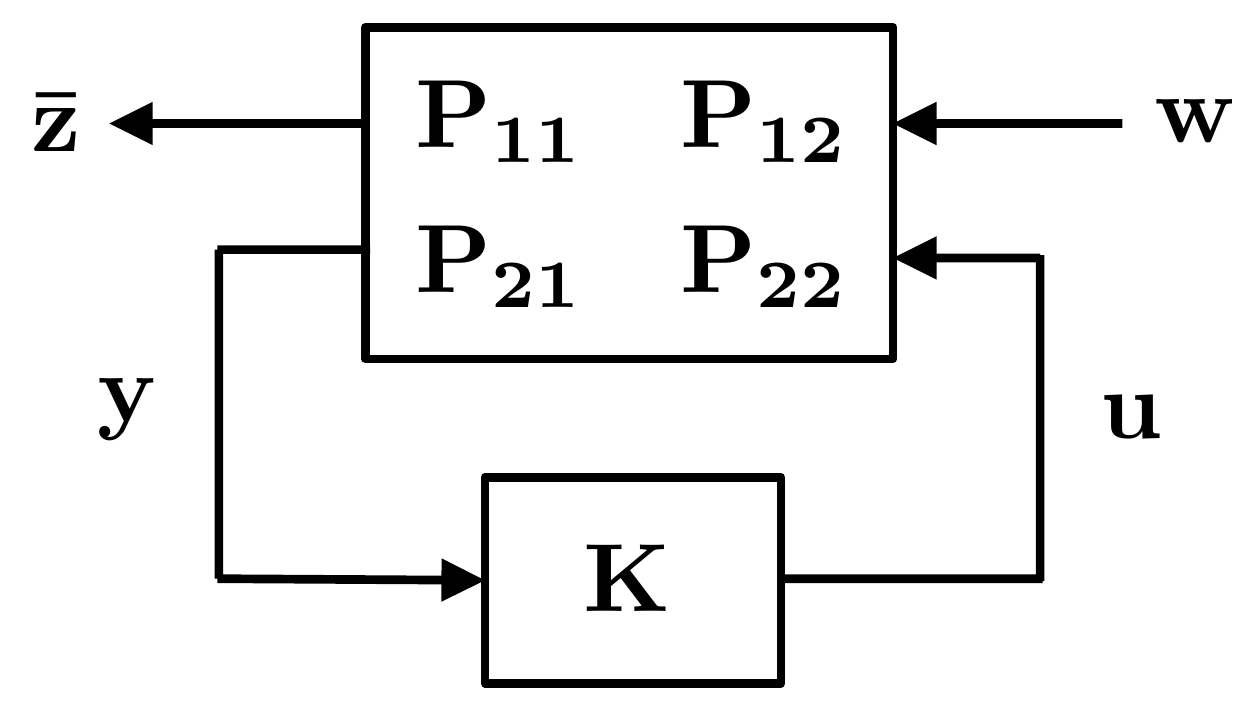}
      \caption{Interconnection of the plant $\tf P$ and controller $\tf K$.}
      \label{fig:pk}
\end{figure}

The following assumptions are made throughout the paper.
\begin{assumption}
The interconnection in Figure \ref{fig:pk} is well-posed -- the matrix $(I-D_{22}D_k)$ is invertible.
\end{assumption}
\begin{assumption}\label{as:stab}
Both the plant and the controller realizations are stabilizable and detectable; i.e., $(A,B_2)$ and $(A_k, B_k)$ are stabilizable, and $(A,C_2)$ and $(A_k,C_k)$ are detectable.
\end{assumption}
Note that assumption~\ref{es:stab}
\rev{The goal of the optimal control problem is to find a controller $\tf K$ to stabilize the plant $\tf P$ and minimize a suitably chosen norm\footnote{\rev{Typical choices for the norm include $\mathcal{H}_2$ and $\mathcal{H}_\infty$.}} of the closed loop transfer matrix from external disturbance $\tf w$ to regulated output $ \tf{\bar{z}}$. This leads to the following centralized optimal control formulation:}
\rev{
\begin{eqnarray}
\underset{\tf K}{\text{minimize  }} &&|| \tf P_{11} + \tf P_{12} \tf K(I- \tf P_{22} \tf K)^{-1} \tf P_{21} || \nonumber\\
\text{subject to } && \tf K \text{ internally stabilizes } \tf P. \label{eq:centralized-add}
\end{eqnarray}
}
\vspace{-5mm}

\subsection{Youla Parameterization} \label{sec:move_Youla}
\revsecond{
A common technique to solve the optimal control problem \eqref{eq:centralized-add} is via the Youla parameterization, which is based on a doubly co-prime factorization of the plant, defined as follows.
\begin{definition}
A collection of stable transfer matrices, $\tf U_r$, $\tf V_r$, $\tf X_r$, $\tf Y_r$, $\tf U_l$, $\tf V_l$, $\tf X_l$, $\tf Y_l$ $\in \RHinf$ defines a doubly co-prime factorization of $\tf P_{22}$ if $\tf P_{22} = \tf V_r \tf U_r^{-1} = \tf U_l^{-1} \tf V_l$ and
\begin{equation}
\begin{bmatrix} \tf X_l & \tf{-Y}_l \\ \tf{-V}_l & \tf U_l \end{bmatrix} \begin{bmatrix} \tf U_r & \tf Y_r \\ \tf V_r & \tf X_r \end{bmatrix} = I. \nonumber
\end{equation}
\end{definition}
Such doubly co-prime factorizations can always be computed if $\tf P_{22}$ is stabilizable and detectable \cite{Zhou1996robust}.

Let $\tf Q$ be the Youla parameter. From \cite{Zhou1996robust}, problem \eqref{eq:centralized-add} can be reformulated in terms of the Youla parameter as 
\begin{eqnarray}
\underset{\tf Q}{\text{minimize  }} &&|| \tf T_{11} + \tf T_{12} \tf Q \tf T_{21}|| \nonumber\\
\text{subject to } && \tf Q \in \RHinf \label{eq:trad_youla-add}
\end{eqnarray}
with $\tf T_{11} = \tf P_{11} + \tf P_{12} \tf Y_r \tf U_l \tf P_{21}$, $\tf T_{12} = - \tf P_{12} \tf U_r$, and $\tf T_{21} = \tf U_l \tf P_{21}$. 

The benefit of optimizing over the Youla parameter $\tf Q$, rather than the controller $\tf K$, is that \eqref{eq:trad_youla-add} is convex with respect to the Youla parameter. One can then incorporate various convex design specifications \cite{Boyd_closed} in \eqref{eq:trad_youla-add} to customize the controller synthesis task.
Once the optimal Youla parameter $\tf Q$, or a suitable approximation thereof, is found in \eqref{eq:trad_youla-add}, we reconstruct the controller by setting $\tf K =  (\tf Y_r - \tf U_r \tf Q)(\tf X_r - \tf V_r \tf Q)^{-1}$. 

}

\subsection{Structured Controller Synthesis and QI} \label{sec:structured}
\rev{We now move our discussion to the distributed optimal control problem.}
We follow the paradigm adopted in \cite{2006_Rotkowitz_QI_TAC,2012_Lessard_two_player,2010_Shah_H2_poset,2013_Lamperski_H2,2013_Lessard_structure,2013_Scherer_Hinf,2014_Lessard_Hinf,2014_Matni_Hinf,2014_Tanaka_Triangular}, and focus on information asymmetry introduced by delays in the communication network -- this is a reasonable modeling assumption when one has dedicated physical communication channels (e.g., fiber optic channels), but may not be valid under wireless settings.  In the references cited above, locally acquired measurements are exchanged between sub-controllers subject to delays imposed by the communication network,\footnote{Note that this delay may range from 0, modeling instantaneous communication between sub-controllers, to infinite, modeling no communication between sub-controllers.} which manifest as subspace constraints on the controller itself.\rev{\footnote{\rev{For continuous time systems, the delays can be encoded via subspaces that may reside within $\mathcal{H}_\infty$ as opposed $\RHinf$.}}}

\rev{Let $\mathcal{C}$ be a subspace enforcing the information sharing constraints imposed on the controller $\tf K$. A distributed optimal control problem can then be formulated as \cite{2006_Rotkowitz_QI_TAC,2011_QIN, 2014_Lessard_convexity, 2014_Sabau_QI}:}
\begin{equation}
\begin{array}{rl}
\underset{\tf K}{\text{minimize  }} &\| \tf P_{11} + \tf P_{12} \tf K(I- \tf P_{22} \tf K)^{-1} \tf P_{21} \| \\
\text{subject to } & \tf K \text{ internally stabilizes } \tf P, \,\, \tf K \in \mathcal{C}.
\end{array}
\label{eq:decentralized}
\end{equation}

\rev{A summary of the main results from the distributed optimal control literature \cite{2006_Rotkowitz_QI_TAC,2012_Lessard_two_player,2010_Shah_H2_poset,2013_Lamperski_H2,2013_Lessard_structure,2013_Scherer_Hinf,2014_Lessard_Hinf,2014_Matni_Hinf,2014_Tanaka_Triangular} can be given as follows:} if the subspace $\mathcal{C}$ is quadratically invariant with respect to $\tf P_{22}$ \revsecond{(i.e., $\tf K \tf P_{22} \tf K \in \mathcal{C}, \,\, \forall \tf K \in \mathcal{C}$)} \cite{2006_Rotkowitz_QI_TAC}, then the set of all stabilizing controllers lying in subspace $\mathcal{C}$ can be parameterized by those stable transfer matrices $\tf Q \in \RHinf$ satisfying $\mathfrak{M}(\tf Q)\in \mathcal{C}$, \rev{for $\mathfrak{M}(\tf Q) := \tf K(I- \tf P_{22} \tf K)^{-1} = (\tf Y_r - \tf U_r \tf Q) \tf U_l$.\footnote{\rev{By definition, we have $\tf P_{22} = \tf V_r \tf U_r^{-1} = \tf U_l^{-1} \tf V_l$. This implies that the transfer matrices $\tf U_r$ and $\tf U_l$ are both invertible. Therefore, $\mathfrak{M}$ is an invertible affine map of the Youla parameter $\tf Q$.}}}
Further, these conditions can be viewed as tight, in the sense that quadratic invariance is also a necessary condition \cite{2011_QIN, 2014_Lessard_convexity} for a subspace constraint $\mathcal{C}$ on the controller $\tf K$ to be enforced via a convex constraint on the Youla parameter $\tf Q$.

This allows the optimal control problem \eqref{eq:decentralized} to be recast as the following convex model matching problem:
\begin{equation} 
\begin{array}{rl}
\underset{\tf Q}{\text{minimize}} & \| \tf T_{11} + \tf T_{12} \tf Q \tf T_{21}\| \\
\text{subject to} & \tf Q \in \RHinf, \,\, \mathfrak{M}(\tf Q) \in \mathcal{C}.
\end{array}
\label{eq:trad_youla2}
\end{equation}

\subsection{\revsecond{QI imposes limitations on controller sparsity}} \label{sec:limitation}
\revsecond{When working with large-scale systems, it is natural to impose that sub-controllers only collect information from a local subset of all other sub-controllers. This can be enforced by setting the subspace constraint $\mathcal{C}$ in problem \eqref{eq:decentralized} to encode a suitable sparsity pattern $\tf K_{ij} = 0$,\footnote{$\tf K_{ij}$ denotes the $(i,j)$-entry of the transfer matrix $\tf K$.} for some $i, j$.  However, if the plant $\tf P_{22}$ is dense (i.e., if the underlying system is strongly connected), which may occur even if the system matrices $(A,B_2,C_2)$ are sparse, then \emph{any} such sparsity constraint is not QI condition with respect to the plant $\tf P_{22}$: this follows immediately from the algebraic definition of QI $\tf K \tf P_{22} \tf K \in \mathcal{C}, \,\, \forall \tf K \in \mathcal{C}$.  As QI is a necessary and sufficient condition for the subspace constraint $\tf K \in \mathcal{C}$ to be enforced via a convex constraint on the Youla parameter $\tf Q$, we conclude that for strongly connected systems, any sparsity constraint imposed on the controller $\tf K$ can only be enforced via a non-convex constraint on Youla parameter.  A major motivation for SLS was to circumvent this limitation of the QI framework.}

\section{State-Feedback System Level Synthesis}\label{sec:state_feedback_SLS}
In this section, we extend the results of Section \ref{sec:sys_resp_intro} to the infinite horizon setting, and propose a novel parameterization of internally stabilizing state-feedback controllers centered around \emph{system responses}, which are defined by the closed loop maps from process disturbances to state and control action. We show that for a given system, the set of stable \revsecond{closed loop system responses that are achievable by an internally stabilizing LTI controller} is an affine subspace of $\RHinf$, and that the corresponding internally stabilizing controller achieving the desired system response admits a particularly simple and transparent realization. 

\revsecond{Consider a state feedback model given by}
\begin{equation}
\tf P = \left[ \begin{array}{c|cc} A & B_1 & B_2 \\ \hline C_1 & D_{11} & D_{12} \\ I & 0 & 0 \end{array} \right]. \label{eq:sfplant}
\end{equation}

In this case, the state dynamics described in \eqref{eq:sys-dynamics} reduce to
\begin{equation}
x[t+1] = Ax[t] +B_2u[t] + B_1 w[t].
\label{eq:LTI}
\end{equation}

The $z$-transform of these state dynamics is then given by
\begin{equation}
(zI - A) \tf x = B_2 \tf u + \ttf{\delta_x}, \label{eq:z_state}
\end{equation}
where we let $ \ttf{\delta_x} := B_1\tf w$ denote the disturbance affecting the state.

{
%

Consider now the (dynamic) state-feedback control policy $\tf u = \tf K \tf x$ and substitute this into~\eqref{eq:z_state}. After a simple rearrangement the maps $\Phix : \ttf{\delta_x} \rightarrow \tf x$ and $\Phiu : \ttf{\delta_x} \rightarrow \tf u$ are seen to be
\begin{equation}
\begin{array}{rcl}
{\tf{\Phi}}_x &=&(zI - A - B_2\tf K)^{-1} \\
{\tf{\Phi}}_u &=& \tf K(zI-A-B_2 \tf K)^{-1}.
\end{array} \label{eq:full_responses_inf}
\end{equation}
The resemblance between~\eqref{eq:full_responses} and~\eqref{eq:full_responses_inf} is immediate. 

\begin{figure}[ht!]
      \centering
      \includegraphics[width=0.4\textwidth]{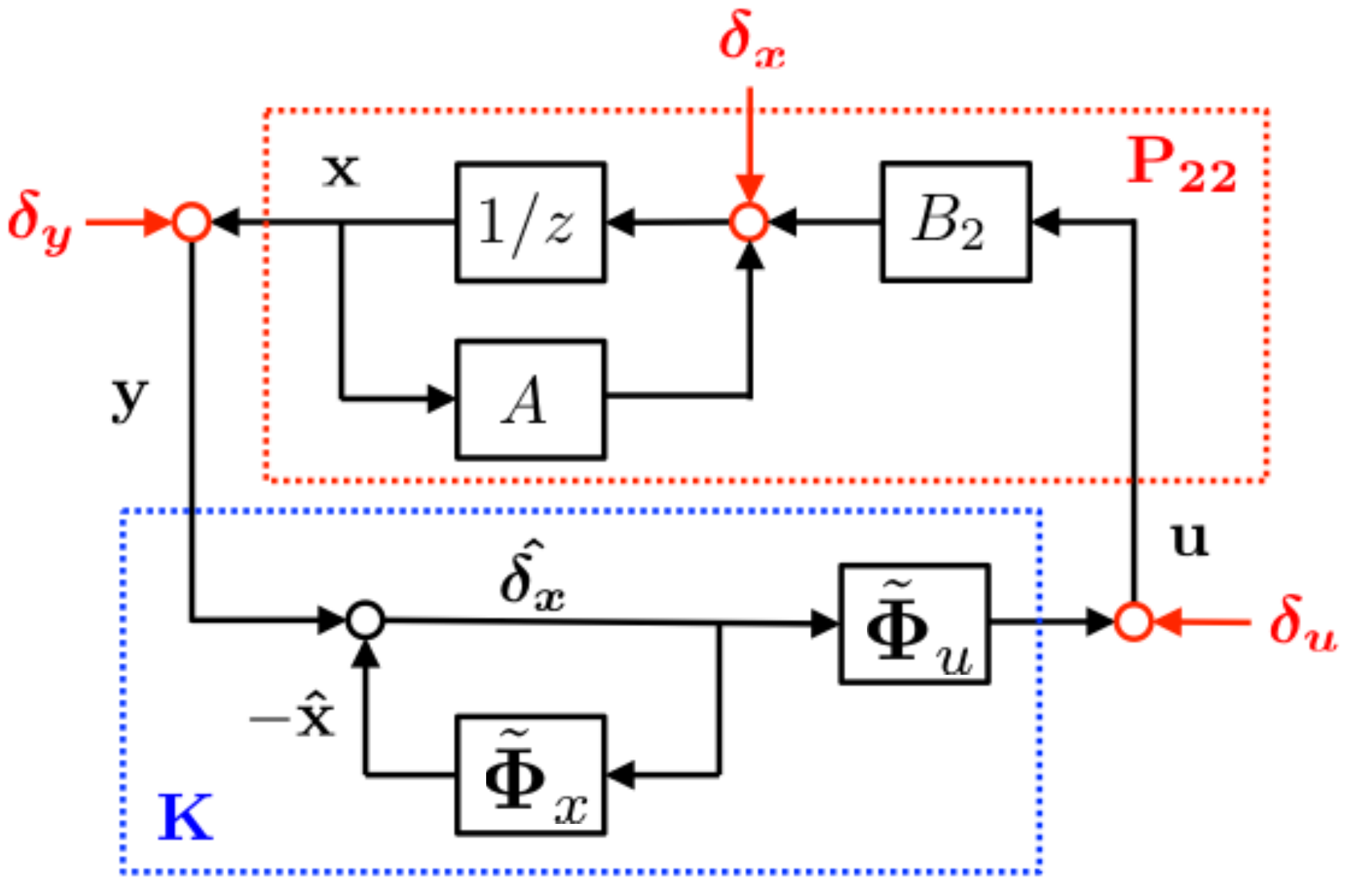}
      \caption{The proposed state feedback controller structure, with $\tilde{\Phix} = I - z \Phix $ and $\tilde{\Phiu} = z \Phiu$.}
      \label{fig:sf_controller}
\end{figure}

We now provide an infinite horizon analogue to Theorem~\ref{thm:finite_horizon}, that ensures that the system response $\SFpair$ are achievable by the proposed state-feedback policy $\tf u = \tf K \tf x$.  As we are now operating over an infinite horizon, we must take care to provide an internally stable realization for the controller.
\begin{theorem}\label{thm:param}
Consider the LTI system~\eqref{eq:LTI}, evolving under a dynamic state-feedback control policy $\tf u = \tf K \tf x$. The following statements are true:
\begin{enumerate}
\item The affine subspace defined by
\begin{equation}\label{eq:affine_cons}
 \begin{bmatrix} zI - A & -B_2 \end{bmatrix}\begin{bmatrix} \Phix \\ \Phiu \end{bmatrix} = I, \quad \Phix, \Phiu \in \frac{1}{z}\RHinf
\end{equation}
parameterizes all system responses from $\ttf{\delta_x}$ to $(\tf x,\tf u)$ as defined in~\eqref{eq:full_responses_inf}, achievable by an internally stabilizing state feedback controller $\tf K$.
\item For any transfer matrices $\SFpair$ satisfying~\eqref{eq:affine_cons}, the controller $\tf K = \Phiu \Phix^{-1}$, implemented as shown in Figure \ref{fig:sf_controller} via the following equations:
\begin{equation*}
\begin{array}{rcl}
\tf u &=& z\Phiu \ttf{\hat{\delta}_x} \\
\ttf{\hat{\delta}_x} &=& \tf x + (I-z\Phix)\ttf{\hat{\delta}_x},
\end{array}
\end{equation*}
 is internally stabilizing and achieves the desired system response~\eqref{eq:full_responses_inf}.
\end{enumerate}
\end{theorem}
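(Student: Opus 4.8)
The plan is to prove the two statements in parallel, reusing the purely algebraic manipulations from the proof of Theorem~\ref{thm:finite_horizon} and isolating the genuinely new infinite-horizon content, which is the stability bookkeeping. For the forward inclusion in statement~1, I would start from an internally stabilizing $\tf K$ and its responses $\Phix = (zI - A - B_2\tf K)^{-1}$, $\Phiu = \tf K\Phix$ from~\eqref{eq:full_responses_inf}. Exactly as in Theorem~\ref{thm:finite_horizon}, a one-line computation gives $\begin{bmatrix} zI-A & -B_2\end{bmatrix}\begin{bmatrix}\Phix \\ \Phiu\end{bmatrix} = (zI - A - B_2\tf K)(zI-A-B_2\tf K)^{-1} = I$, so the affine identity~\eqref{eq:affine_cons} holds. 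Internal stability of $\tf K$ then forces the closed-loop maps $\Phix, \Phiu$ to lie in $\RHinf$, and strict properness of $(zI-A)^{-1}$ together with properness of $\tf K$ upgrades this to $\Phix, \Phiu \in \frac{1}{z}\RHinf$. Hence every internally achievable response lies in the claimed subspace.

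For the reverse inclusion and statement~2, take any $\SFpair \in \frac{1}{z}\RHinf$ satisfying~\eqref{eq:affine_cons} and set $\tf K = \Phiu\Phix^{-1}$. Matching spectral components in~\eqref{eq:affine_cons} shows $\Phi_x(1) = I$, so $\Phix$ is invertible and $I - z\Phix$ is strictly proper; consequently the realization blocks $z\Phiu$ and $z\Phix - I$ lie in $\RHinf$ and the loop defining $\ttf{\hat{\delta}_x}$ is well posed. The achievability computation is then identical to statement~2 of Theorem~\ref{thm:finite_horizon}: substituting $\tf K = \Phiu\Phix^{-1}$ and invoking~\eqref{eq:affine_cons} gives $(zI - A - B_2\tf K)^{-1} = \Phix$ and $\tf K\Phix = \Phiu$, so the realized controller reproduces~\eqref{eq:full_responses_inf}.

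The hard part is internal stability of the realization, and this is where the $\frac{1}{z}\RHinf$ hypothesis and the affine constraint do the real work. The subtlety is that the isolated controller map $\tf x \mapsto \ttf{\hat{\delta}_x}$ equals $(z\Phix)^{-1} = \frac{1}{z}\Phix^{-1}$, which is \emph{unstable}, since it contains the improper polynomial $\Phix^{-1} = zI - A - B_2\tf K$; stability can only emerge from the full interconnection. I would therefore inject two test disturbances at the break points of the loop — the process disturbance $\ttf{\delta_x}$ on the plant and an auxiliary signal $\tf{d}$ into the summing junction defining $\ttf{\hat{\delta}_x}$ — and compute the maps to all internal signals $(\tf x, \tf u, \ttf{\hat{\delta}_x})$. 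Eliminating $\tf x$ and $\tf u$ collapses the loop to $z\bigl[(zI-A)\Phix - B_2\Phiu\bigr]\ttf{\hat{\delta}_x} = \ttf{\delta_x} + (zI-A)\tf{d}$, and the affine constraint~\eqref{eq:affine_cons} reduces the bracketed operator to the identity, leaving the pure delay $z\,\ttf{\hat{\delta}_x} = \ttf{\delta_x} + (zI-A)\tf{d}$. The crucial point is that inverting this pure delay contributes only a stable $\frac{1}{z}$ factor, so $\ttf{\hat{\delta}_x}$, and in turn $\tf x$ and $\tf u$, are stable functions of the injected disturbances. Verifying that each of these maps lands in $\RHinf$ — using only that $z\Phix$, $z\Phiu$, $\Phix A$, $\Phiu A \in \RHinf$ — certifies internal stability and completes the proof.
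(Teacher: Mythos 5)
Your route coincides with the paper's own proof. Part 1 is the same direct substitution, plus the (correct, and slightly more explicit than the paper's) observation that internal stability and strict properness of $(zI-A)^{-1}$ place $\SFpair$ in $\frac{1}{z}\RHinf$. Part 2 likewise follows the paper: deduce $\Phix(1) = I$ from \eqref{eq:affine_cons} so that $\Phix^{-1}$ exists and the loop is well posed, repeat the finite-horizon algebra for achievability, and then certify internal stability by injecting perturbations, collapsing the loop via the affine constraint, and checking that the resulting closed-loop maps are stable. Your collapsed equation $z\ttf{\hat{\delta}_x} = \ttf{\delta_x} + (zI-A)\tf{d}$ and the maps you derive from it reproduce exactly the first two columns of the paper's table in \eqref{eq:sf_cl}, with your auxiliary signal $\tf d$ playing the role of the paper's measurement perturbation $\ttf{\delta_y}$.

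There is, however, one genuine (if small) gap in the internal-stability step: you inject perturbations at only two of the three interconnection points. The paper also injects $\ttf{\delta_u}$ on the link carrying the control action from the $z\Phiu$ block into the plant, and verifies the corresponding third column of closed-loop maps, $\Phix B_2$, $I + \Phiu B_2$, $\frac{1}{z}B_2$, to $(\tf x,\tf u,\ttf{\hat{\delta}_x})$. This is not pedantry: internal instability commonly manifests only at an unchecked injection point (this is precisely how unstable pole-zero cancellations hide), so the bare assertion that stability of the maps from $(\ttf{\delta_x},\tf d)$ ``certifies internal stability'' does not stand on its own. Here the omission happens to be harmless and is repaired in one line: a perturbation at the plant input acts on the loop exactly like the process disturbance $B_2\ttf{\delta_u}$ --- a constant, hence stable, composition of the channel you did check --- plus an identity feedthrough onto $\tf u$, so all maps from $\ttf{\delta_u}$ are stable as well; you should either say this or check the third channel directly. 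Separately, a minor imprecision in your motivating remark: $(z\Phix)^{-1} = \frac{1}{z}\Phix^{-1}$ is proper, not improper, since $z\Phix$ is biproper with value $I$ at $z=\infty$; the correct concern is that it need not be \emph{stable}, its poles being the poles of $\tf K$ --- your point that stability must emerge from the full interconnection stands, but for that reason.
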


The similarities between the finite horizon case given in Theorem~\ref{thm:ft-param} and those presented in Theorem~\ref{thm:param} are striking -- this should not be surprising, as the lower-block-triangular operators we previously considered can be viewed as truncations of the matrix representations of the transfer matrices $\SFpair$.

At this point the observant reader may have noticed that Theorem~\ref{thm:param} circumvents the standard assumptions of \emph{stabilizability} of the pair $(A,B_2)$. However this not the case: 
\begin{lemma}
The constraint~\eqref{eq:affine_cons} is feasible if and only if $(A,B_2)$ is stabilizable.
\label{lem:stabilizable}
\end{lemma}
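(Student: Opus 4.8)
The plan is to prove both directions by directly interrogating the structure of the affine constraint \eqref{eq:affine_cons}, namely $\begin{bmatrix} zI-A & -B_2\end{bmatrix}\begin{bmatrix}\Phix \\ \Phiu\end{bmatrix} = I$ with $\Phix,\Phiu \in \frac{1}{z}\RHinf$. The cleanest framing is to view feasibility as a statement about whether the row-operator $\begin{bmatrix} zI-A & -B_2\end{bmatrix}$ admits a stable, strictly-proper right inverse. This immediately suggests a Hautus-type argument, since stabilizability of $(A,B_2)$ is equivalent to the Hautus condition that $\begin{bmatrix} zI-A & B_2\end{bmatrix}$ have full row rank for every $z$ with $|z|\geq 1$ (every unstable mode).

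For the direction \emph{stabilizability $\Rightarrow$ feasibility}, the most transparent route is to exhibit an explicit feasible point rather than argue abstractly. First I would take any static stabilizing gain $K$ (which exists precisely because $(A,B_2)$ is stabilizable, so that $A+B_2K$ is Schur-stable) and set
\begin{equation*}
\Phix = (zI - A - B_2 K)^{-1}, \qquad \Phiu = K(zI - A - B_2 K)^{-1}.
\end{equation*}
Then I would verify two things: that these lie in $\frac{1}{z}\RHinf$, which holds because $A+B_2K$ being Schur guarantees the resolvent is stable and strictly proper; and that they satisfy the affine equation, which is the same one-line algebraic identity $(zI-A-B_2K)(zI-A-B_2K)^{-1}=I$ already used in the proof of Theorem~\ref{thm:param}, part~1. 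This produces a concrete witness and closes the forward direction.

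For the converse, \emph{feasibility $\Rightarrow$ stabilizability}, I would argue by contrapositive via the Hautus test. Suppose $(A,B_2)$ is not stabilizable; then there exists $\lambda$ with $|\lambda|\geq 1$ and a nonzero left eigenvector $v^*$ (row vector) with $v^* A = \lambda v^*$ and $v^* B_2 = 0$, so that $v^*\begin{bmatrix} \lambda I - A & -B_2\end{bmatrix} = 0$. Left-multiplying the constraint \eqref{eq:affine_cons} by $v^*$ and evaluating the resulting transfer-function identity at $z=\lambda$ would then force $v^* = 0$ after using that $\Phix,\Phiu$ are analytic at $z=\lambda$ (since $|\lambda|\geq 1$ and the maps are in $\frac{1}{z}\RHinf$, hence stable and bounded on and outside the unit disk). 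This contradiction shows feasibility fails whenever stabilizability fails.

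The main obstacle is the converse, and specifically the care needed at the boundary case $|\lambda| = 1$: the argument relies on $\Phix$ and $\Phiu$ being well-defined (finite) when evaluated at the unbalanced eigenvalue $\lambda$, which is exactly what stability in $\RHinf$ buys us — a stable transfer matrix is analytic on the closed exterior of the unit disk, so evaluation at $|\lambda|\geq 1$ is legitimate and yields the finite vector identity $v^* = v^*(\lambda I - A)\Phix(\lambda) - v^* B_2 \Phiu(\lambda) = 0$. I would make sure to state explicitly that it is precisely the membership $\Phix,\Phiu\in\frac{1}{z}\RHinf$ (rather than mere rationality) that licenses this pointwise evaluation; without the stability constraint the lemma would be false, as one could always invert $\begin{bmatrix} zI-A & -B_2\end{bmatrix}$ over rational functions regardless of stabilizability.
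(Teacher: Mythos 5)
Your proposal is correct and takes essentially the same route as the paper: the forward direction is the identical explicit construction $\Phix = (zI-A-B_2F)^{-1}$, $\Phiu = F(zI-A-B_2F)^{-1}$ from a stabilizing static gain, and the converse is the same PBH/Hautus argument exploiting that stability of $\Phix,\Phiu$ licenses evaluation on $|z|\geq 1$. The only cosmetic difference is that the paper states the converse directly (the right inverse forces $\begin{bmatrix} zI-A & -B_2\end{bmatrix}$ to have full row rank for all $|z|\geq 1$), whereas you unwind it as a contrapositive with an explicit left eigenvector -- these are the same argument.
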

\begin{proof}
We first show that the stabilizability of $(A,B_2)$ implies that there exist transfer matrices $\Phix, \Phiu\in\frac{1}{z}\RHinf$ satisfying equation \eqref{eq:affine_cons}. From the definition of stabilizability, there exists a matrix $F$ such that $A+B_2 F$ is a stable matrix. Substituting the state feedback control law $u = F x$ into \eqref{eq:z_state}, we have $\tf x = (zI-A-B_2 F)^{-1}  \ttf{\delta_x}$ and $\tf u = F(zI-A-B_2 F)^{-1}  \ttf{\delta_x}$. 
The system response is given by $\Phix = (zI-A-B_2 F)^{-1}$ and $\Phiu = F(zI-A-B_2 F)^{-1}$, which lie in $\frac{1}{z}\RHinf$ and are a solution to \eqref{eq:affine_cons}.

For the opposite direction, we note that $\Phix, \Phiu \in \RHinf$ implies that these transfer matrices do not have poles outside the unit circle $| z | \geq 1$. From \eqref{eq:z_state}, we further observe that $\begin{bmatrix} zI - A & -B_2 \end{bmatrix}$ is right invertible in the region where $\Phix$ and $\Phiu$ do not have poles, with $\begin{bmatrix} \Phix^\top & \Phiu^\top \end{bmatrix}^\top$ being its right inverse. This then implies that $\begin{bmatrix} zI - A & -B_2 \end{bmatrix}$ has full row rank for all $| z | \geq 1$. This is equivalent to the PBH test \cite{ZDGBook} for stabilizability, proving the claim.
\end{proof}
%
%

This lemma therefore provides a sanity check: if the system is not stabilizable, then there do not exist any stable system responses, and hence the constraints are infeasible.  We now prove Theorem \ref{thm:param}.

\begin{proof}[Theorem \ref{thm:param}]
This proof is nearly identical to that of Theorem \ref{thm:ft-param}, but with the added complication of ensuring internal stability of the resulting controller.

Proof of 1.: Let $\tf K$ be any internally stabilizing controller, and set $\tf u = \tf K \tf x$.  Then as argued above, we have that
\begin{equation}
\begin{array}{rcl}
\tf x &=& (zI-(A+B_2\tf K))^{-1}\ttf{\delta_x} \\
\tf u &=& \tf K(zI-(A+B_2\tf K))^{-1}\ttf{\delta_x}.
\end{array}
\end{equation}
It is then easily seen that
\begin{equation}
\begin{bmatrix} zI - A \ -B_2\end{bmatrix}\begin{bmatrix}(zI-(A+B_2\tf K))^{-1} \\ \tf K (zI-(A+B_2\tf K))^{-1} \end{bmatrix} = (zI - A -B_2\tf K)(zI-(A+B_2\tf K))^{-1} = I.
\end{equation}

Proof of 2.: First notice that the affine constraint \eqref{eq:affine_cons} implies that $\Phix(1) = I$, and hence $\Phix^{-1}$ exists.  We begin by showing that the controller $\tf K =\Phiu\Phix^{-1}$ achieves the desired response, and then show that the implementation suggested in Figure \ref{fig:sf_controller} is internally stabilizing.  Let $\tf K = \Phiu\Phix^{-1}$ and notice that
\begin{equation}
\tf x = (zI-(A+B_2\Phiu \Phix^{-1}))^{-1}\ttf{\delta_x}.
\end{equation}
But we then have that
\begin{equation}
(zI-(A+B_2\Phiu \Phix^{-1}))^{-1} = ((zI-A)\Phix - B_2 \Phiu)\Phix^{-1})^{-1} = \Phix ((zI-A)\tf \Phix - B_2 \Phiu)^{-1}  = \Phix
\end{equation}
where the last equality follows form the fact that $\SFpair$ satisfy \eqref{eq:affine_cons}.  Similarly we have that
\begin{equation}
\tf u = \Phiu \Phix^{-1}\tf x = \Phiu \Phix^{-1} \Phix \ttf{\delta_x} = \Phiu \ttf{\delta_x},
\end{equation}
where the second equality follows from the fact that $\tf x = \Phix \ttf{\delta_x}$.

Finally, we turn to proving the internal stability of the proposed controller realization shown in Figure \ref{fig:sf_controller}, where here $\Phixtil = I - z \Phix$ and $\Phiutil= z \Phiu$.  We first note that from Figure \ref{fig:sf_controller}, we can express the state feedback controller $\tf K$ as $\tf K = \Phiutil (I - {\Phixtil})^{-1} = (z \Phiu)(z \Phix)^{-1} = \Phiu \Phix^{-1}$. 

It can be checked that $z\Phixtil,\Phiutil \in \RHinf$, and hence the internal feedback loop between $\ttf{\hat{\delta}_x}$ and the reference state trajectory $\tf{\hat{x}}$ is well defined.  As is standard, we introduce external perturbations $\delta_x, \delta_y$, and $\delta_u$ into the system and note that the perturbations entering other links of the block diagram can be expressed as a combination of $(\ttf{\delta_x}, \ttf{\delta_y}, \ttf{\delta_u})$ being acted upon by some stable transfer matrices.\footnote{The matrix $A$ may define an unstable system, but viewed as an element of $\mathcal{F}_0$, defines a stable (FIR) transfer matrix.}  Hence the standard definition of internal stability applies, and we can use a bounded-input bounded-output argument (e.g., Lemma $5.3$ in \cite{Zhou1996robust}) to conclude that it suffices to check the stability of the nine closed loop transfer matrices from perturbations $(\ttf{\delta_x}, \ttf{\delta_y}, \ttf{\delta_u})$ to the internal variables $(\tf x, \tf u, \ttf{\hat{\delta_x}})$ to determine the internal stability of the structure as a whole.

Routine calculations show that the closed loop transfer matrices from $(\ttf{\delta_x}, \ttf{\delta_y}, \ttf{\delta_u})$ to $(\tf x, \tf u, \ttf{\hat{\delta}_x})$ are given by
\begin{equation}
\begin{bmatrix} \tf x \\ \tf u \\ \ttf{\hat{\delta}_x} \end{bmatrix} = \begin{bmatrix} \Phix & -\Phixtil - \Phix A & \Phix B_2 \\ \Phiu & \Phiutil - \Phiu A & I + \Phiu B_2 \\ \frac{1}{z}I & I-\frac{1}{z}A & \frac{1}{z}B_2 \end{bmatrix} \begin{bmatrix} \ttf{\delta_x} \\ \ttf{\delta_y} \\ \ttf{\delta_u} \end{bmatrix}. \label{eq:sf_cl}
\end{equation}
As all nine transfer matrices in \eqref{eq:sf_cl} are stable, the implementation in Figure \ref{fig:sf_controller} is internally stable. Furthermore, the desired system response $\{\Phix, \Phiu\}$, from $\ttf{\delta_x}$ to $(\tf x, \tf u)$, is achieved.
\end{proof}

With Theorem~\ref{thm:param} in hand we can now proceed as in the finite horizon case and define the System Level Synthesis problem. Let us assume that a suitable cost functional $g$ has been chosen -- suitable examples include $\mathcal{L}_1, \hinf$, or $\mathcal{H}_2$ objective functions \cite{Zhou1996robust}. The general form of an SLS problem takes the form
\begin{equation}
\begin{array}{rl}
 \min_{\Phix, \Phiu}&  g(\Phix,\Phiu) \\
\st &  \begin{bmatrix} zI-A & - B_2 \end{bmatrix}\begin{bmatrix} \Phix \\ \Phiu \end{bmatrix} = I \\
& \Phix, \Phiu \in \frac{1}{z}\mathcal{R}\hinf \cap \Ss  .
\end{array}\label{eq:SLSsyn_inf}
\end{equation}
If the objective function $g(\cdot)$ and constraint set $\mathcal{S}$ are convex, then optimization problem \eqref{eq:SLSsyn_inf} is a convex but infinite-dimensional problem.  The constraint set $\Ss$ can be used to capture additional constraints to be imposed on the the system response, which in turn translate to constraints on the internal realization of the controller as illustrated in Figure \ref{fig:sf_controller}.  We note here that if the system is controllable, then a useful SLC to add is that the system responses are FIR, i.e., that $\SFpair \in \fir$ for some suitably chosen horizon $T$.  In that case, the optimization problem \eqref{eq:SLSsyn_inf} immediately becomes finite dimensional (assuming that the remaining constraints admit finite dimensional representations), and further, the controller implementation shown in Figure \ref{fig:sf_controller} reduces to an interconnection of FIR filter banks, greatly simplifying controller deployment.  Finally, we show in Section \ref{sec:robust} that, modulo some minor technical assumptions and slight modifications to problem \eqref{eq:SLSsyn_inf}, taking a sufficiently long FIR horizon $T$ results in near optimal performance.
}

%

\subsection{Distributed Control, Locality, and Scalability}
The key insight that we exploit in this section is that distributed constraints (in the form of sparsity and delay constraints) can be imposed on the internal blocks of the controller realization shown in Figure \ref{fig:sf_controller} via subspace constraints on the system responses.  This idea of imposing structure on the internal controller realization (i.e., on $\SFpair$), instead of on the controller itself (i.e., on the map $\tf K = \Phiu\Phix^{-1}$), is what allows us to circumvent the limitations of existing results in the distributed control literature.  This insight, coupled with the observation that many optimal control problems of interest can be decomposed into parallel subproblems, allows us to scale out distributed control methods to the large-scale setting with ease.

Before delving into the technical details, we present a simple example that illustrates some of the shortcomings in the distributed controls literature that we are able to overcome.

\begin{example}
\label{ex:motivating}
\rev{
Consider the optimal control problem:
\begin{equation}
\begin{array}{rl}
\minimize_{~u} & \lim_{T\to \infty}\frac{1}{T}\sum_{t=0}^T \mathbb{E}\|x[t]\|_2^2 \\
\text{subject to} & x[t+1]=Ax[t]+u[t]+w[t],
\end{array}
\label{eq:opt_ctrl_example}
\end{equation}
with disturbance $w[t]\overset{\mathrm{i.i.d}}{\sim{}}\mathcal{N}(0,I)$.  We assume full state-feedback, i.e., the control action at time $t$ can be expressed as $u[t]=f(x[0:t])$ for some function $f$.  An optimal control policy $u^\star$ for this LQR problem is easily seen to be given by $u^\star[t]=-Ax[t]$. } 

\rev{Further suppose that the state matrix $A$ is sparse and let its support define the adjacency matrix of a graph $\mathcal{G}$ for which we identify the $i$th node with the corresponding state/control pair $(x_i,u_i)$.  
In this case, we have that the optimal control policy $u^\star$ can be implemented in a \emph{localized} manner. In particular, in order to implement the state feedback policy for the $i$th actuator $u_i$, only those states $x_j$ for which $A_{ij}\neq 0$ need to be collected -- thus only those states corresponding to immediate neighbors of node $i$ in the graph $\mathcal{G}$, i.e., only \emph{local} states, need to be collected to compute the corresponding control action, leading to a localized implementation.  As we discuss below, the idea of locality is essential to allowing controller synthesis and implementation to scale to arbitrarily large systems, and hence such a structured controller is desirable.  }

\rev{Now suppose that we naively attempt to solve optimal control problem \eqref{eq:opt_ctrl_example} by converting it to its equivalent $\mathcal{H}_2$ model matching problem \rev{\eqref{eq:decentralized}} and constraining the controller $\tf K$ to have the same support as $A$, i.e., $\tf K = \sum_{t=0}^\infty \frac{1}{z^t}K[t]$, $\supp{K[t]} \subset \supp {A}$.} 
\revsecond{If the graph $\mathcal{G}$ is strongly connected, then \emph{any} sparsity constraint in the form of $\tf{K}_{ij} = 0$ is not QI with respect to the plant $\tf P_{22} = (zI-A)^{-1}$.  To see this, note that if the graph $\mathcal{G}$ is strongly connected, then $\tf P_{22}$ is a dense transfer function: it then follows immediately that any subspace $\mathcal{C}$ enforcing sparsity constraints on $\tf K$ fails to satisfy $\tf K \tf P_{22} \tf K \in \mathcal{C}, \,\, \forall \tf K \in \mathcal{C}$, and hence is not QI with respect to $P_{22}$.
The results of \cite{2014_Lessard_convexity} further allow us to conclude that computing such a structured controller can never be done using convex programming when using the Youla parameterization.}

\rev{In contrast, in the case of a full control ($B_2=I$) problem, the condition \eqref{eq:affine_cons} simplifies to $(zI-A)\Phix - \Phiu = I$, $\Phix, \Phiu \in \frac{1}{z}\RHinf$.   Again, suppose that we wish to synthesize an optimal controller that has a communication topology given by the support of $A$ -- from the above implementation, it suffices to constrain the support of transfer matrices $\Phix$ and $\Phiu$ to be a subset of that of $A$. It can be checked that $\Phix = \frac{1}{z}I$, and $\Phiu = -\frac{1}{z}A$ satisfy the above constraints, and recover the globally optimal controller $\tf K = -A$. } 
\end{example}

\subsection{Spatial and Temporal Locality}\label{sec:locality}
As mentioned above, the constraint set $\Ss$ can be used to impose distributed constraints on the controller realization -- in this section we describe a useful such class of constraints that enforce \emph{locality} in the system responses.  In particular, in addition to the FIR constraints described above, which can be interpreted as localizing the system responses in time, we also ask that the system response be spatially localized.

The notion of temporal locality ensures that the closed-loop disturbance response has a finite-impulse response. Furthermore, it ensures that the resulting System Level Synthesis problem is a finite-dimensional mathematical program. Spatial locality ensures that when a disturbance enters the system, its effect is felt only in a local neighborhood of the origin. Spatiotemporal locality, as well as being desirable from a performance perspective, will also make computation tractable. We begin by introducing spatial locality.

The network structure of the system is naturally modeled by a graph $\cG(V,E)$. In this setting the vertices $v_i\in V$ denote subsystems and the unweighted edges, $e_{ij}:=(v_i,v_j)\in E\subset V\times V$ with $v_i,v_j \in V$, indicate that subsystem $j$ affects subsystem $i$. As a distance metric on $\cG$ we use the length of the shortest path from $v_i$ to $v_j$ which we denote by $\dist(v_i\rightarrow v_j)$. 

\begin{defn}
For the unweighted graph $\cG$, the $d$-outgoing set of subsystem $i$ is  $\out_{v_i}(d):= \{v_j~|~ \dist(v_i \rightarrow v_j)\le d\}$. Analogously, the $d$-incoming set of subsystem $i$ is $\inset_{v_i}:=\{v_j~|~ \dist(v_i \rightarrow v_j)\le d\}$.
\end{defn}
The definition of locality we use is built directly on the notion of the incoming and outgoing sets. Informally, the system is $d$-localizable if every subsystem in the network can synthesize a controller using a plant model which is limited to subsystems contained in its $d$-outgoing set and implemented using signals coming only from subsystems in its $d$-incoming set. Note that we choose $d$ to be significantly smaller than the longest path in $\cG$.

\begin{defn}
Given the state dynamics~\eqref{eq:LTI} and the map $x = \Phix \tf w$. Denote the transfer matrix describing the perturbation from signal $w_j$ at subsystem $j$ to the state of subsystem $i$, $x_i$, by $[\Phix]_{ij}$. Then the map $\Phix$ is $d$-localizable if and only if for every subsystem $j$, $[\Phix]_{ij}= 0$ for all $i \notin \out_j(d)$. The definition for $d$-localizability of $\Phiu$ where $u = \Phiu \tf w$, follows in the same manner but with perturbations to control action $u_i$ at subsystem $i$. 
\end{defn}

\begin{defn}
The subspace $\cL_d$ is said to be a $d$-localized constraint if it constrains $\Phix$ to be $d$-localized and $\Phiu$ to be $(d+1)$-localized.
\end{defn}

We now introduce temporal-locality: The set $\fir$ is a convex \emph{System Level Constraint} (SLC) that  ensures the system response $\SFpair$ has a finite impulse response of horizon  length $T$. Consider a transfer matrix $\tf G$, then:
\begin{equation*}
\fir:= \left\{\tf G \in \mathcal{R}\hinf ~|~\tf G = \sum_{i=0}^T\frac{1}{z^i}\tf G[i] \right\},
\end{equation*}
We make an important observation regarding computation here. When the SLC set $\Ss$ intersects $\fir$, the resulting SLS problem~\eqref{eq:SLSsyn_inf} becomes finite-dimensional. We now have all the ingredients necessary to define a broad class of locality constraints.

\begin{defn}
The system given by the state equations~\eqref{eq:LTI} is $(d,T)$-localizable if~\eqref{eq:SLSsyn_inf} is feasible with $\Ss = \cL_d \cap \fir $ for some ($d,T)\in \tf Z_+ \times \tf Z_+$.
\end{defn}

The final class we consider describe  delay constraints. We allow for three types of delays in the network; actuation delay, communication delay, and sensing delay denoted by $k_a, k_c$, and $k_s$ respectively. Actuation delay describes the time taken for control action $u_i$ to affect state $x_i$ of it's subsystem. Communication delay is the amount of time it takes for a subsystem-controller to transmit a signal to its immediate neighbors. Finally, the sensing delay is the time taken for a subsystem-controller $u_i$ to access its state variable $x_i$. For brevity it is assumed that $(k_a,k_c,k_s)$ are integers and normalized with respect to the sampling period of the system.

We are now in position where we can combine the $(d,T)$-localization subspace constraint and the communication subspace constraints as constraints on the system response $\{\Phix, \Phiu\}$. The most transparent way to implement these constraints is to enforce sparsity constraints on the entries of the system response spectral factors. Recall that $\Phix = \sum_{k=0}^{\infty}  z^{-k}\Phi_x[k]$ and $\Phiu = \sum_{k=0}^{\infty}  z^{-k}\Phi_u[k]$, then the combined $(d,T)$-localization and communication subspace constraints can be encoded by specifying the support of the spectral factors. Define $\alpha = \floor{\frac{k-k_a-k_s}{k_c}}$, then we have
\begin{equation*}
\begin{array}{l}
\supp(\Phi_x[k]) \subseteq \supp\left((A+I)^{\min(d,\alpha)} \right)\\
\\
\supp(\Phi_u[k]) \subseteq  \supp\left((A+I)^{\min(d+1,\alpha)} \right)
\end{array} \quad \text{for } k=1\hdots T
\end{equation*}
and $\Phi_x[k] = \tf 0$,  $\Phi_u[k] = \tf 0 $ for all $k>T$. The reader is referred to~\cite{MickeyThesis} for a detailed description of the derivation of $\alpha$ and for some explicit cases where combinations of $d$ and the delay parameters are guaranteed to lead to feasible solutions. The delay constraints which are encoded as sparsity constraints on the spectral factors are convex constraints. For notational convenience we will use the set $\cD$ to denote these sparsity constraints. Thus a $(d,T)$-locality constraint with delays is given by $\Ss = \cL_d \cap \fir \cap \cD$.

\subsection{Scalability via Decomposition}\label{sec:scalability}
We now introduce the concept of column-separability for cost functionals and constraints. It is this column-wise separability that will allow us to achieve $O(1)$ synthesis complexity with respect to the state dimension of~\eqref{eq:LTI}.

For compactness of notation, in this section we define the transfer matrix $\tf \Phi = \left[ \Phix^T ~ \Phiu^T \right]^T$ and assume it dimension $n_r \times n_c$. Let $\{c_1, \hdots , c_p \}$ partition the set $\{1,\hdots, n_c\}$. Let us partition $\tf \Phi$ as $\{\tf \Phi(:,c_1)\hdots, \tf \Phi(:,c_p)\}$. This idea of partitioning a decision variable by its columns leads to the notion of column-wise separability:
\begin{defn}\label{def:cols}
The functional $g(\tf \Phi)$ is column-wise separable with respect to the partition $c_1,\hdots,c_p$ if it can be written as
\begin{equation*}
g(\tf \Phi) = \sum_{j=1}^pg_j(\tf \Phi(:,c_j))
\end{equation*}
for some functionals $g_j$ for $j=1,\hdots,p$.
\end{defn}
Some examples of column-wise separable norms are described below. We note that the $\hinf$-norm is \emph{not} column- (nor row-) wise separable.
{
\begin{example} [Frobenius norm]
The square of the Frobenius norm of a $m \times n$ matrix $\Phi$ is given by
\begin{equation}
\| \Phi \|_{F}^2 = \sum_{i=1}^m \sum_{j=1}^n \Phi_{ij}^2. \nonumber
\end{equation}
This objective function is column-wise separable with respect to arbitrary column-wise partition. Consider the explicit example of
\begin{equation*}
\Phi = \left[ \begin{array}{ccc} 1 & 2 & 3 \\ 4 & 5 & 6\end{array}  \right]
\end{equation*}
partitioned as $\{1,3\},\{2\}$, then 
\begin{equation*}
\|\Phi \|_F^2 =  \left\| \left[ \begin{array}{cc} 1 & 3  \\ 4 & 6\end{array}  \right]  \right\|_F^2 + \left\| \left[ \begin{array}{c} 2 \\  5 \end{array} \right] \right\|_F^2.
\end{equation*}
\end{example}
\begin{example} [$\mathcal{H}_2$ norm]
The square of the $\mathcal{H}_2$ norm of a transfer matrix $\ttf{\Phi}$ is given by
\begin{equation}
\| \ttf{\Phi} \|_{\mathcal{H}_2}^2 = \sum_{t=0}^\infty \| \Phi[t] \|_{F}^2, \label{eq:h2_comp}
\end{equation}
which is column-wise separable with respect to arbitrary column-wise partition. Now consider the $\htwo$-norm of a transfer matrix realization of a system with a finite impulse response of horizon $T$, i.e. $\tf \Phi \in \fir$. In this case the summation in~\eqref{eq:h2_comp} is finite.
\end{example}
\begin{example}[Element-wise $\ell_1$] \label{ex:l1}
Motivated by the separability of the $\mathcal{H}_2$ norm, we define the element-wise $\ell_1$ norm (denoted by $\mathcal{E}_1$) of a transfer matrix $\tf \Phi \in \FT$ as
\begin{equation}
|| \tf \tf \Phi ||_{\mathcal{E}_1}  =  \sum_i \sum_j \sum_{t=0}^T | \Phi_{ij}[t] |. \nonumber
\end{equation}
\end{example}

}

\begin{defn}
The constraint-set $\Ss$ in~\eqref{eq:SLSsyn_inf} is column-wise separable with respect to the partition $c_1,\hdots,c_p$ when the condition
\begin{equation*}
\tf \Phi \in \Ss \Leftrightarrow \tf \Phi(:,c_j)\in \Ss_j ~\text{for } j=1,\hdots,p
\end{equation*}
is satisfied for some sets $\Ss_j$ for $j =1,\hdots,p$, where $\Ss_j = \cL(:,c_j)\cap \cX_j \cap \fir$.
\end{defn}
{
\begin{example} [Affine Subspace]\label{ex:ZAB}
The affine subspace constraint
\begin{equation}
\tf G \ttf{\Phi} = \tf H \nonumber
\end{equation}
is column-wise separable with respect to arbitrary column-wise partition. Specifically, we have
\begin{equation}
\tf G \Sub{\ttf{\Phi}}{ : }{c_j} = \Sub{\tf{H}}{ : }{c_j} \nonumber
\end{equation}
for $c_j$ any subset of $\{1, \dots, n\}$. An important special case of column-wise  separable affine subspace constraints is the system level parametrization constraint~\eqref{eq:SLSsyn_inf}, here we have 
\begin{equation*}
\tf G = \begin{bmatrix} zI-A & - B_2 \end{bmatrix} ,\quad  \tf \Phi = \left[ \begin{array}{c} \Phix \\ \Phiu \end{array} \right], \text{and} \quad \tf H = I.
\end{equation*}
\end{example}
\begin{example} [Locality and FIR Constraints]
The locality and FIR constraints introduced in Section \ref{sec:locality}
\begin{equation}
\ttf{\Phi} \in \Ell \cap \FT \nonumber
\end{equation}
are column-wise separable with respect to arbitrary column-wise partition.  This follows from the fact that both locality and FIR constraints can be encoded via sparsity structure: the resulting linear subspace constraint is trivially column-wise separable.
\end{example}

With the definition of column-wise separability firmly established the following chain of problems will make clear how functions and constraint sets that satisfy these definitions provide computational tractability to large-scale problems. First, consider the generic optimization problem 
\begin{subequations} \label{eq:gopt}
\begin{align} 
\underset{\ttf{\Phi}}{\text{minimize }} \quad & g(\ttf{\Phi}) \label{eq:gopt-1}\\
\text{subject to } \quad & \ttf{\Phi} \in \s, \label{eq:gopt-2}
\end{align}
\end{subequations}
where the decision variable is $\ttf{\Phi}$, an $m \times n$ transfer matrix, $g(\cdot)$ a functional objective, and $\s$ a set constraint. Assume now that the objective function \eqref{eq:gopt-1} and the set constraint \eqref{eq:gopt-2} are both column-wise separable with respect to a column-wise partition $\{ c_1, \dots, c_p\}$. When the objective function and constraints are both column-wise separable we are said to have a  \emph{column-wise separable problem}. Specifically, \eqref{eq:gopt} can be partitioned into $p$ parallel subproblems as
\begin{subequations} \label{eq:gopt-decom}
\begin{align} 
\underset{\Sub{\ttf{\Phi}}{ : }{c_j}}{\text{minimize }} \quad & g_j(\Sub{\ttf{\Phi}}{ : }{c_j}) \label{eq:gopt-decom-1} \\
\text{subject to } \quad & \Sub{\ttf{\Phi}}{ : }{c_j} \in \s_j \label{eq:gopt-decom-3}
\end{align}
\end{subequations}
for $j = 1, \dots, p$. 
We will now show that System Level Synthesis problems of the form~\eqref{eq:SLSsyn_inf} under separability conditions can be written in the form of~\eqref{eq:gopt-decom} and explicitly form the problem. First, let us define
\begin{equation*}
\tf Z_{AB} = \begin{bmatrix} zI-A & -B_2 \end{bmatrix},
\end{equation*}
set $\tf \Phi$ as defined in example~\ref{ex:ZAB}, and let the the system level constraint be given by $\s = \Ell \cap \FT \cap \Sother$. The state-feedback SLS problem~\eqref{eq:SLSsyn_inf} can then be written as 
\begin{subequations}\label{eq:optSLSdecomp}
\begin{align}
 \min_{\tf \Phi}\quad &  g(\tf \Phi) \label{eq:SLSobj}\\
\st\quad &  \tf Z_{AB} = I \label{eq:Zab_aff}\\
& \tf \Phi\in \Ss  .\label{eq:set_cons}
\end{align}
\end{subequations}
As we have already shown, the affine subspace constraint~\eqref{eq:Zab_aff} is column-wise separable with respect to any column partitioning. If $g(\cdot)$ is chosen to be column-wise separable, for example it is given by the $\htwo-$ or  $\mathcal{E}_1$-norms, then~\eqref{eq:optSLSdecomp} is a column-wise separable optimization problem if the SLC~\eqref{eq:set_cons} is column-wise separable. Given that we have shown $\fir \cap \Ell $ to be separable, all that is needed is for $\Ss$ to be column-wise separable is for $\cX$ to also be separable. When this is true, we  can express the set constraint $\s_j$ in \eqref{eq:gopt-decom-3} as $\s_j = \Sub{\Ell}{:}{c_j} \cap \FT \cap \Sother_j$ for some $\Sother_j$ for each $j$. Finally, using the column partitions, \eqref{eq:optSLSdecomp} can be partitioned into $p$ parallel subproblems of the form
\begin{subequations} \label{eq:decom}
\begin{align} 
\underset{\Sub{\ttf{\Phi}}{ : }{\colps_j}}{\text{minimize }} \quad & g_j(\Sub{\ttf{\Phi}}{ : }{c_j}) \label{eq:decom-1}\\
\text{subject to } \quad & \tf Z_{AB} \Sub{\ttf{\Phi}}{ : }{c_j} = \Sub{I}{ : }{c_j} \label{eq:decom-2} \\
& \Sub{\ttf{\Phi}}{ : }{c_j} \in \Sub{\Ell}{:}{c_j} \cap \FT \cap \Sother_j \label{eq:decom-3}
\end{align}
\end{subequations}
for $j = 1, \dots, p$. 
The column-wise separable SLS problem~\eqref{eq:decom} is exactly in the form of the generic column-wise separable optimization problem~\eqref{eq:gopt-decom}.

 Observe that the $p$ sub-problems are completely decoupled and can be solved in parallel, moreover, because of the constraint $\cL$ the dimension of each the sub-problem will be smaller than that of the global problem~\eqref{eq:SLSsyn_inf} -- this is because the decomposed constraint $\cL(:,c_j)$ has sparse support and can be permuted into partition of zero and non-zero elements, the zero elements can then be dropped. Effectively, this means that each sub-problem only has to work with a local system model. Subsystem $i$ comprising $(A_{ii}, A_{ij}, B_{ii})$ is a constructed from suitably defined sub-blocks of $(A,B_2)$ as defined in~\eqref{eq:LTI}.
 
 The following simplified example should provide some additional intuition into the role of spatio-temporal locality.
 \begin{example}[Symmetric Chain]
 Consider the network dynamical system with the dynamics at node $i$ given by
 \begin{align*}
 x_i[t+1] &= A_{ii}x_i[t] + \sum_{j\in \mathcal{N}_i}A_{ij}x_j[t] + B_{ii}u_i[t] + \delta_{x_i}[t].
 \end{align*}
 The set $\mathcal{N}_i$ denotes the incoming set of node $i$, in this case the system graph is a bi-directional chain, i.e. the adjacency matrix has zeros everywhere apart from at the sub- and super-diagonal entries. Furthermore, we assume that the control inputs and disturbances only directly affect  local subsystems. In Figure~\ref{fig:time_space_cone}  a diagram of the system response to a particular disturbance $(\ttf{\delta_x})_i$ is depicted. The horizontal-axis denotes time and the vertical-axis indicates position in the chain. At time $t$ an impulse hits node $i$ (chosen as the center of the chain). The figure shows  the communication delay imposed on the controller via the QI subspace SLC, the deadbeat response of the system to the disturbance imposed by the FIR SLC, and the localized region affected by the disturbance $(\ttf{\delta_x})_i$ imposed by the localized SLC. 
 
 \begin{figure}[ht!]
      \centering
      \includegraphics[width=0.6\textwidth]{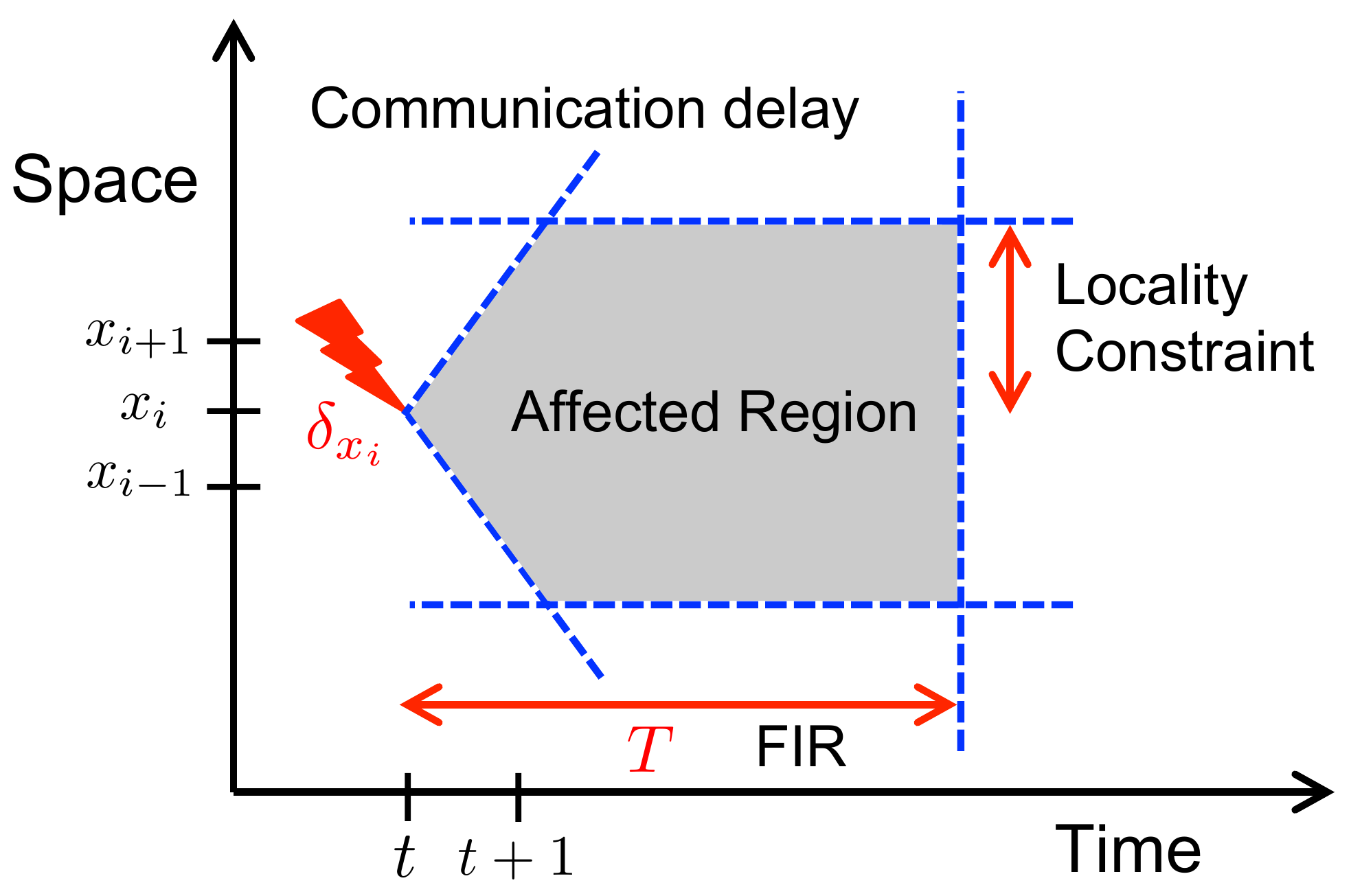}
      \caption{The state response to a disturbance hitting the chain at time $t$.}
      \label{fig:time_space_cone}
\end{figure}

\end{example}

\subsubsection{Dimension Reduction}
In the previous section, the idea of column-wise separability for functions and constraints was introduced. It was shown that SLS problems~\eqref{eq:SLSsyn_inf} defined by column-wise separable components could be easily solved by a set of parallel sub-problems as described by~\eqref{eq:decom}. In this subsection, we take advantage of the fact that locality constraints $\Ell(:,c_j)$ and network structure likely enforce a sparsity pattern on the data and decision variables. Such sparsity leads to many of the entries in the columns of $\tf \Phi (:,c_j)$ being constrained to be zero, thus many of the elements of $\tf Z_{AB}$ are not needed - we now proceed to demonstrate how this can capitalized on.

Let $s_j$ and $t_j$ be sets of positive integers, and assume that we have a column-wise separable SLS problem~\eqref{eq:decom} with column partitions $\{c_1,\hdots, c_p\}$. Later in the section we will describe how to construct the sets $s_j$ and $t_j$, for now we focus on the intuition behind the algorithm. It can be taken for granted that the cardinality of the set $s_j$ is less than the number of rows in $\tf \Phi(:,c_j)$. Likewise, the cardinality of $t_j$ will be less than the number of rows in $\tf Z_{AB}$. Consider the $j^{\text{th}}$ subproblem from~\eqref{eq:decom}, it will be shown that it is equivalent to the following optimization problem:
\begin{subequations} \label{eq:decom40}
\begin{align} 
\underset{\Sub{\ttf{\Phi}}{\rone_j}{c_j}}{\text{minimize }} \quad & \bar{g}_j(\Sub{\ttf{\Phi}}{\rone_j}{c_j}) \label{eq:decom40-1}\\
\text{subject to } \quad & \Sub{\tf Z_{AB}}{\rtwo_j}{\rone_j} \Sub{\ttf{\Phi}}{\rone_j}{c_j} = \Sub{I}{\rtwo_j}{c_j} \label{eq:decom40-2} \\
& \Sub{\ttf{\Phi}}{\rone_j}{c_j} \in \Sub{\Ell}{\rone_j}{c_j} \cap \FT \cap \bar{\Sother}_j \label{eq:decom40-3},
\end{align}
\end{subequations}
where $\tf \Phi(s_j,c_j)$ corresponds to a sub-matrix of $\tf \Phi$ composed of the rows and columns of $\tf \Phi$ specified by the sets $s_j$ and $c_j$ respectively. The cost functional $\bar{g}_j$  is simply the functional $g_j$ restricted to the reduced set of variables, likewise $ \bar{\Sother}_j$ is simply $\cX_j$ restricted to the appropriate subset. Roughly speaking, the set $s_j$ is the collection of optimization variables contained within the localized region specified by $\Sub{\Ell}{:}{c_j}$, and the set $t_j$ is the collection of states that are directly affected by the optimization variables in $s_j$.

The computational complexity of solving~\eqref{eq:decom40} depends on the choice of cost functional $g$ (and hence $\bar{g}_j$) and the number of variables in the problem. It should be clear by looking at~\eqref{eq:decom40} that the cardinality of the sets $c_j,s_j,t_j$ determine the computational complexity of the resulting reduced-order SLP. For example, the cardinality of the set $s_j$ is equal to the number of nonzero rows of the locality constraint $\Sub{\Ell}{:}{\colps_j}$. The sparsity of the system matrices is also central to the cardinality of theses sets. When the locality constraint and the system matrices are suitably sparse, it is possible to make the size of these sets much smaller than the size of the global network. In this case, the global optimization subproblem \eqref{eq:decom} reduces to a local optimization subproblem \eqref{eq:decom40} which depends only on the local plant model $\Sub{\tf Z_{AB}}{t_j}{s_j}$. 

Combining column-wise separability with the dimension-reduction procedure described above, we have the following algorithm: 
\\
\noindent
\\
\textbf{Input: }$g(\cdot), \Ss,$ and $\tf{Z}_{AB}$
\begin{enumerate}
\item Find the column-wise partitioning $\{c_1,\hdots ,c_p\}$ such that~\eqref{eq:optSLSdecomp} decomposes into~\eqref{eq:decom}.
\item \textbf{For} $j=1,\hdots, p$

Construct the sets $s_j, t_j$ such that~\eqref{eq:decom} reduces to ~\eqref{eq:decom40}.

Solve~\eqref{eq:decom40}.
\\
\noindent
\\
\textbf{Output: }$\tf \Phi$

\textbf{End For}
\item From sub-solutions $\tf \Phi(s_j,c_j)$ with $j=1,\hdots,p$, construct $\tf \Phi$.
\end{enumerate}

The above algorithm embodies the idea of locality. Not only does locality allow for decoupling of the system level synthesis problem, it also provides a natural set of reduced order models where each variable in the smaller problem corresponds to a ``local'' patch of the system model. The reader is referred to \cite{SysLevelSyn2} for the details of how to construct minimal sets $s_j$ and $t_j$.

}

\subsection{Localized LQR Optimal Control}
\label{sec:llqr}
Here we provide an example of a classical distributed control problem, namely that of distributed $\mathcal{H}_2$, or LQR, optimal control, that can be made scalable by imposing locality constraints on the system responses $\SFpair$, and exploiting the separability of the resulting optimization problem.  We call the resulting optimization problem the Localized LQR (LLQR) problem.

In \cite{2014_Wang_CDC}, we formulate the LLQR problem with uncorrelated process noise (i.e., $B_1 = I$) as
\begin{subequations} \label{eq:llqr}
\begin{align}
\underset{\{\Phix, \Phiu\}}{\text{minimize}} \quad & || \begin{bmatrix} C_1 & D_{12} \end{bmatrix} \begin{bmatrix} \Phix \\ \Phiu \end{bmatrix} ||_{\mathcal{H}_2}^2 \label{eq:llqr-1}\\
\text{subject to} \quad & \begin{bmatrix} zI - A & -B_2 \end{bmatrix} \begin{bmatrix} \Phix \\ \Phiu \end{bmatrix} = I \label{eq:llqr-2} \\
& \begin{bmatrix} \Phix \\ \Phiu \end{bmatrix} \in \Ell \cap \FT \cap \frac{1}{z} \mathcal{RH}_\infty. \label{eq:llqr-3}
\end{align}
\end{subequations}
The (system level objective) SLO \eqref{eq:llqr-1} and the SLC \eqref{eq:llqr-3} are both column-wise separable with respect to arbitrary column-wise partitions. The separable property of the SLO is implied by the separability of the $\mathcal{H}_2$ norm and the assumption that the process noise is pairwise uncorrelated. The separability of the constraints \eqref{eq:llqr-2} - \eqref{eq:llqr-3} follows from the above discussion pertaining to affine subspaces. The physical interpretation of the column-wise separable property is that we can analyze the system response of each local process disturbance $\ttf{\delta_{x_j}}$ in an independent and parallel way, and then exploit the superposition principle satisfied by LTI systems to reconstruct the full solution to the LLQR problem.  We also note that removing the locality and FIR constraints does not affect column-wise separability, and hence the standard LQR problem with uncorrelated noise is also column-wise separable.
In general, imposing FIR or locality constraints degrades the transient performance of the controller, so our controller is not $\mathcal{H}_2$ optimal. However, we will see in the next section via a numerical example that by choosing appropriate $(d,T)$ parameters, the difference becomes negligible. In Section~\ref{sec:firaprox} we will derive bounds on the gap between the optimal solution and an FIR approximation. 

\subsubsection{Performance Comparison}\label{sec:performance}
In this section, we synthesize our LLQR optimal controller for a specific plant, and compare the performance with different classes of $\mathcal{H}_2$ optimal controllers from \cite{2013_Lamperski_H2}. In particular, we consider the centralized, delayed centralized, and optimal distributed (with quadratically invariant (QI) information sharing constraints) $\mathcal{H}_2$ controllers. 

The plant model is given by $(A,B)$, where $A$ is a tridiagonal matrix with dimension $59$ given by
\begin{eqnarray}
A = \begin{bmatrix} 1 & 0.2 & \cdots & 0\\
-0.2 & \ddots & \ddots & \vdots\\
\vdots & \ddots & \ddots & 0.2\\
0 & \cdots & -0.2 & 1
\end{bmatrix}. \nonumber
\end{eqnarray}
The instability of the plant is quantified by the spectral radius of $A$, which is $\rho(A) = 1.0768 > 1$. $B$ is a $59 \times 20$ matrix with the $(6n+1,2n+1)$-th and $(6n+2,2n+2)$-th entries being $1$ and zero elsewhere, $n = 0, \dots, 9$. Even though our method can be applied to an arbitrary plant topology, we use this simple plant model as it leads to easily visualized representations of the aforementioned space-time regions.

For the localized controller, we choose $(d,T) = (9,29)$. For all controllers with communication delay constraints, we assume that the communication network has the same topology as the physical network, but the speed is $t_c = 1.5$ times faster than the speed at which dynamics propagate through the plant. For our localized distributed controllers, we have
\begin{eqnarray}
\text{supp}\left(\Phi_x[t]\right) &=& \bigcup_{t=1}^T \frac{1}{z^t} \text{supp}\left({A}^{\min(d, \lfloor t_c(t-1) \rfloor)}\right) \nonumber\\
\text{supp}\left(\Phi_u[t]\right) &=& \bigcup_{t=2}^T \frac{1}{z^t} \text{supp}\left({B^\top}\right)\text{supp}\left({A}^{\min(d+1, \lfloor t_c(t-2) \rfloor)}\right)
\end{eqnarray}
for each value of $t$ up to $T$. The union operation for binary matrices is taken as the element-wise OR operation.
We then solve \eqref{eq:llqr} with $C_1$ and $D_{12}$ set to identity.

We illustrate the difference between the different control schemes by plotting the space-time evolution of a single disturbance hitting the middle state. A concrete realization of a localized forward space-time region is in Figures \ref{fig:subfigure8} and \ref{fig:subfigure9}: the effect of the disturbance is limited in both state and control action in time and space. 

\begin{figure}[h!]
      \centering
      \subfigure[State for Open loop]{%
      \includegraphics[width=0.22\textwidth]{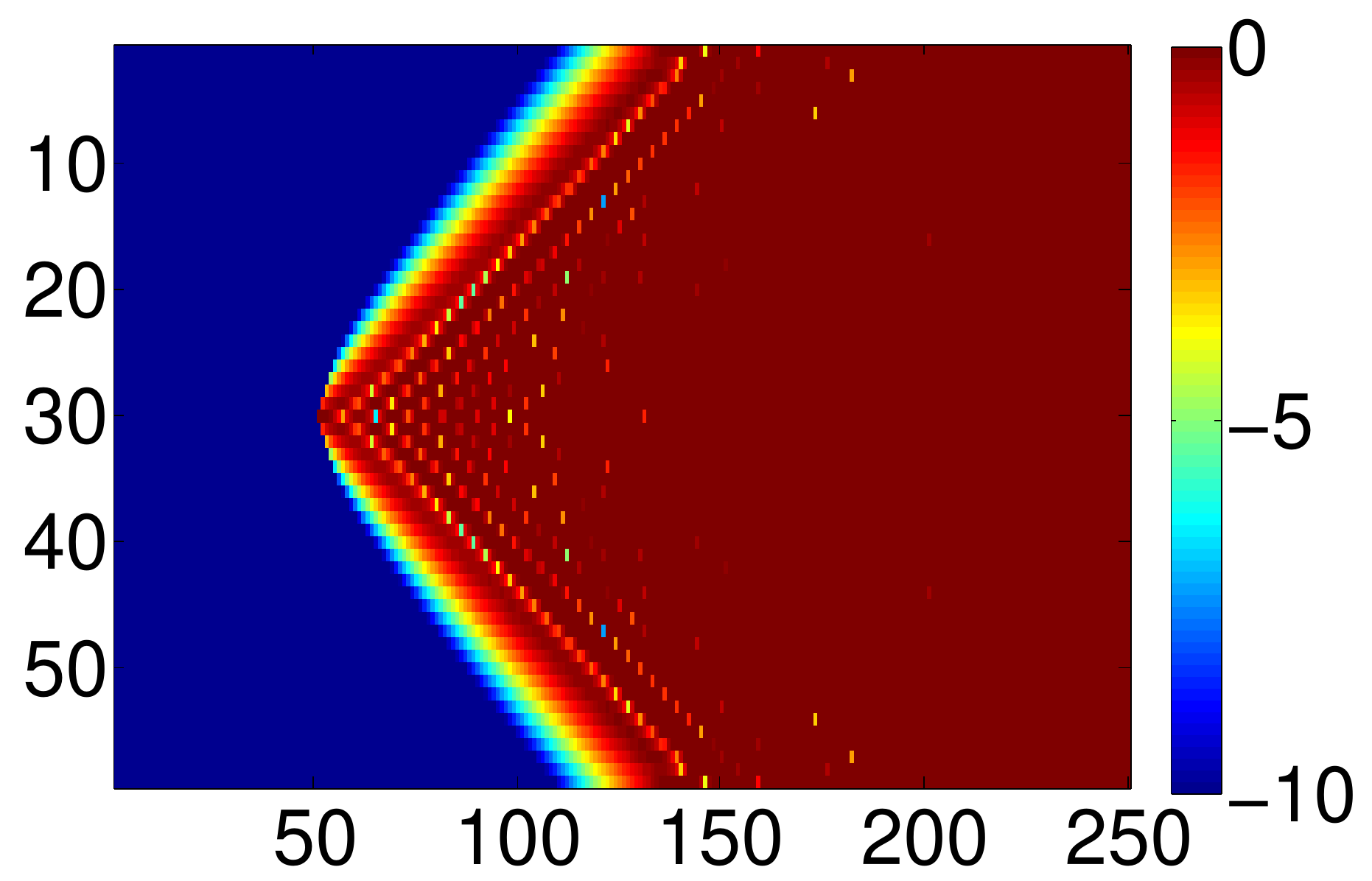}
      \label{fig:open}}
      
        \subfigure[State for Ideal $\mathcal{H}_2$]{%
          \includegraphics[width=0.22\textwidth]{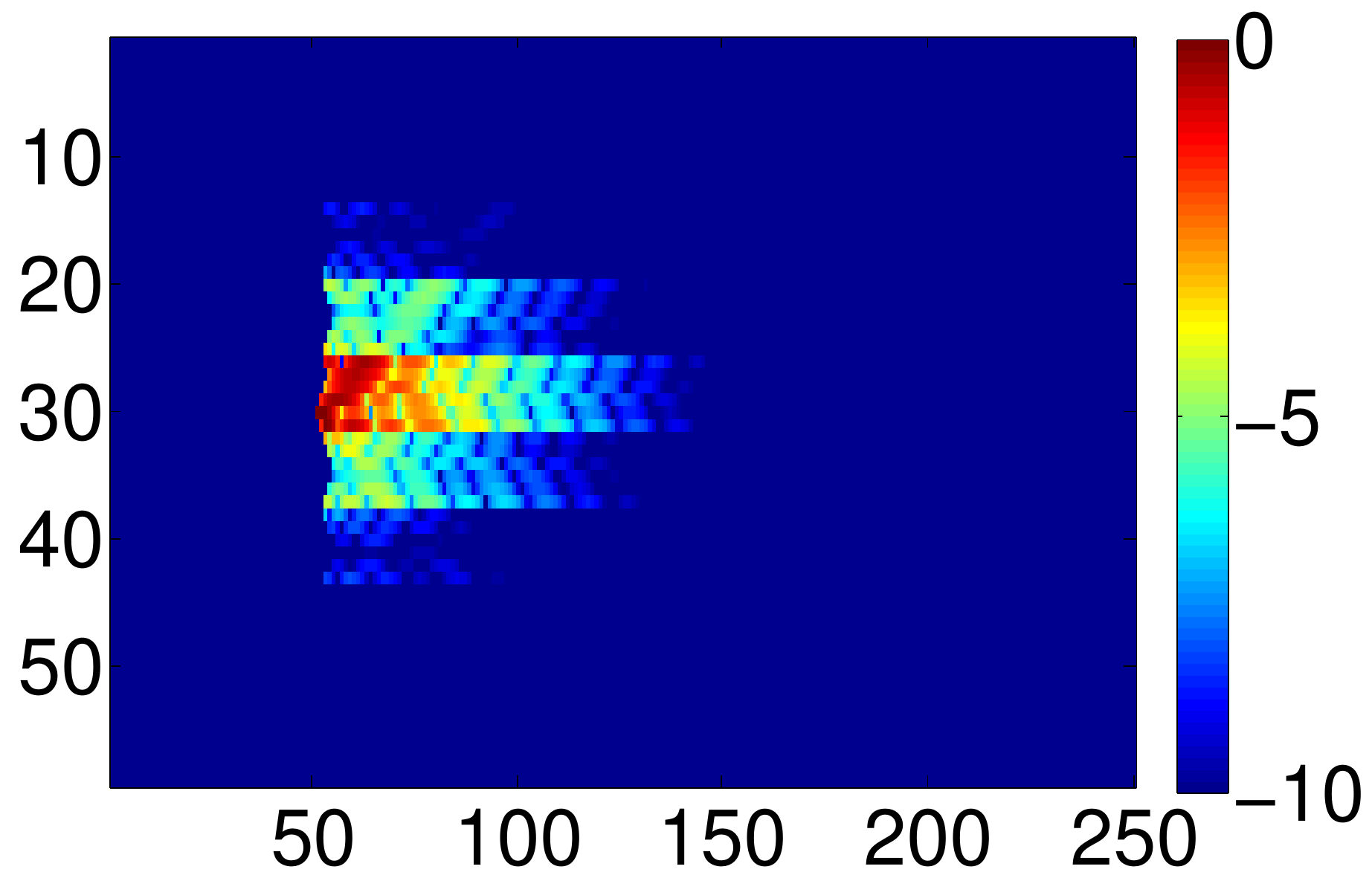}
          \label{fig:subfigure2}}
        \quad
        \subfigure[Control for Ideal $\mathcal{H}_2$]{%
          \includegraphics[width=0.22\textwidth]{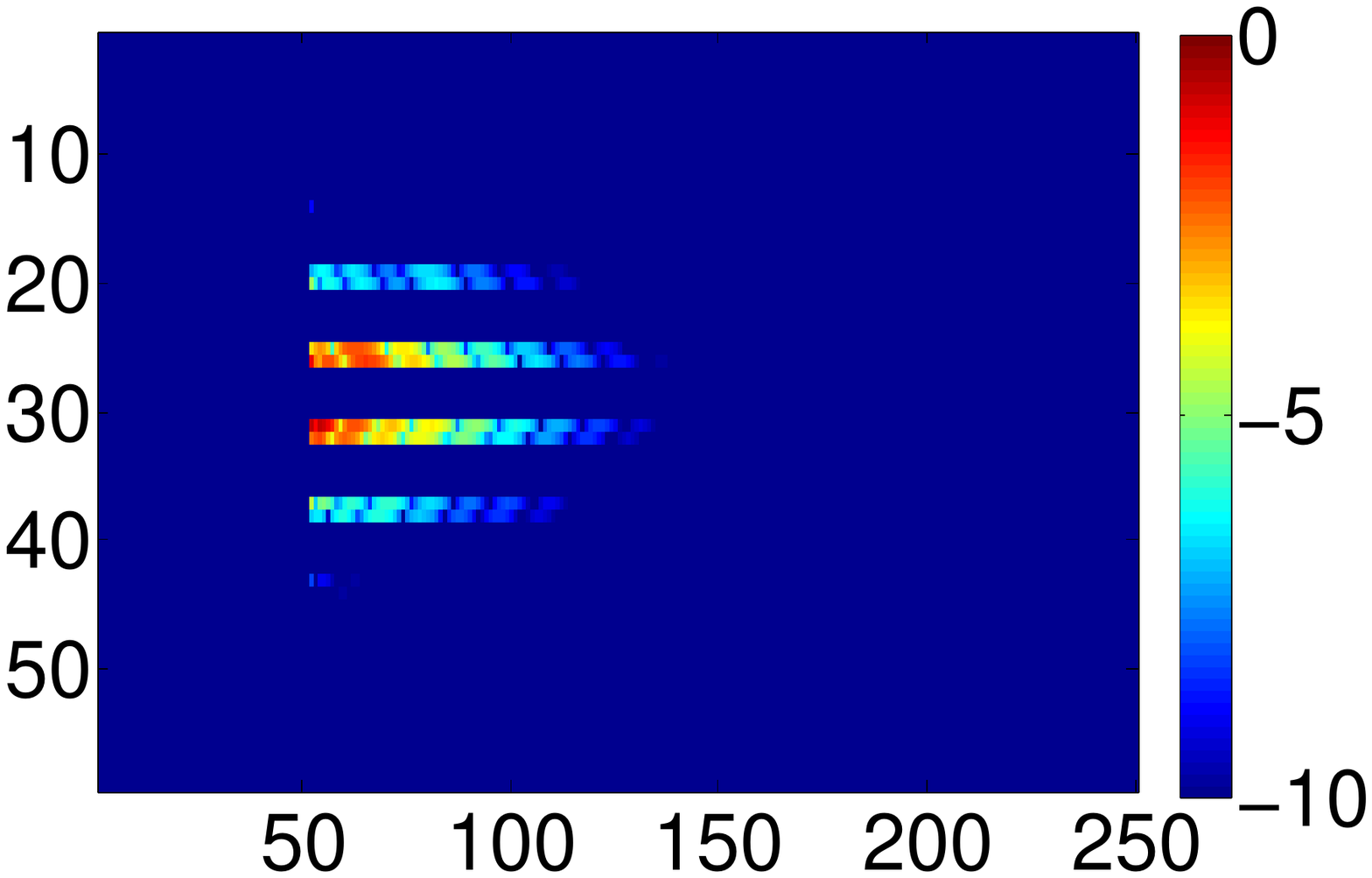}
          \label{fig:subfigure3}}
        
        \subfigure[State for Delayed $\mathcal{H}_2$]{%
          \includegraphics[width=0.22\textwidth]{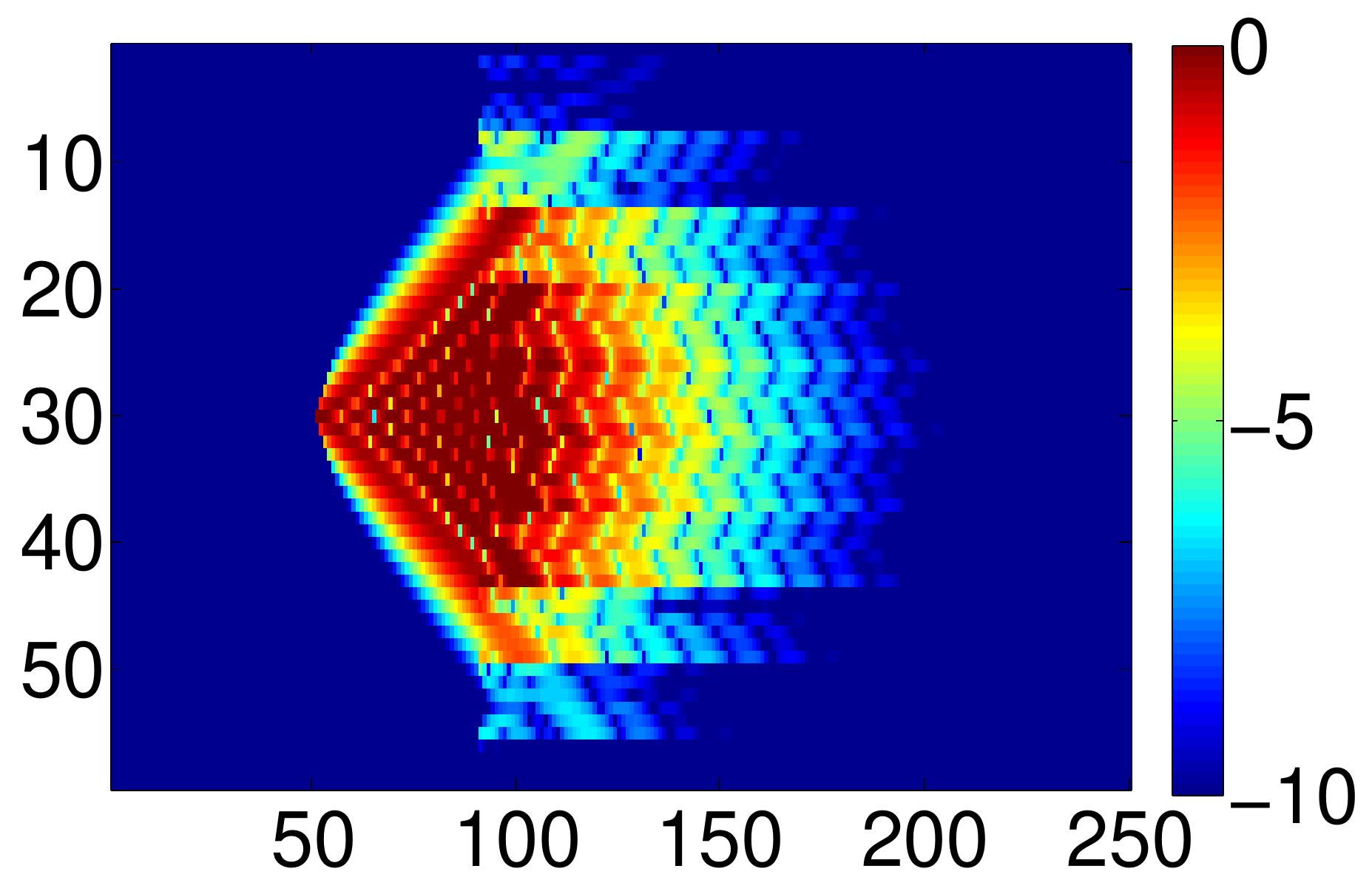}
          \label{fig:subfigure4}}
        \quad
        \subfigure[Control for Delayed $\mathcal{H}_2$]{%
          \includegraphics[width=0.22\textwidth]{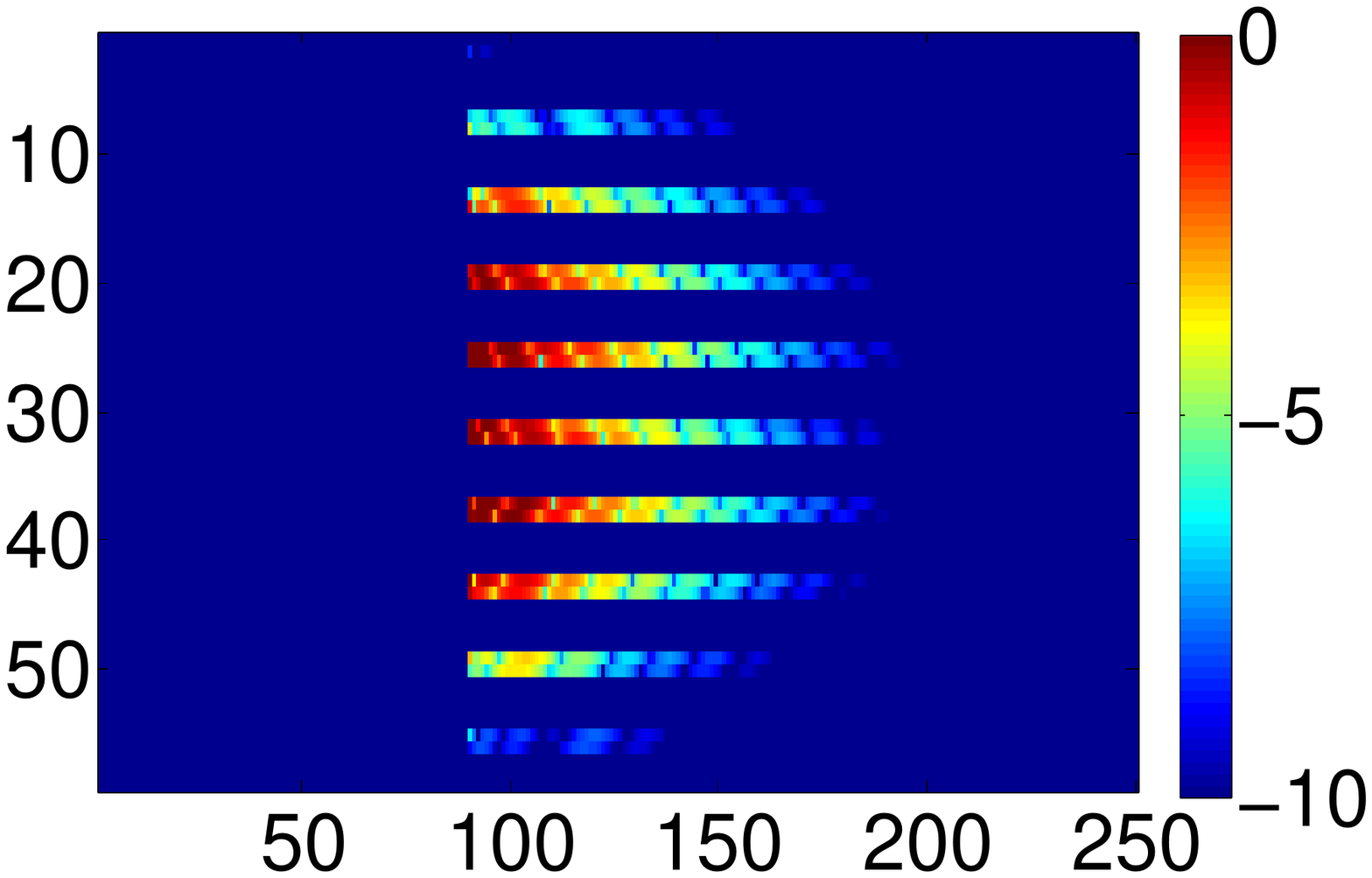}
          \label{fig:subfigure5}}
          
        \subfigure[State for Distributed]{%
          \includegraphics[width=0.22\textwidth]{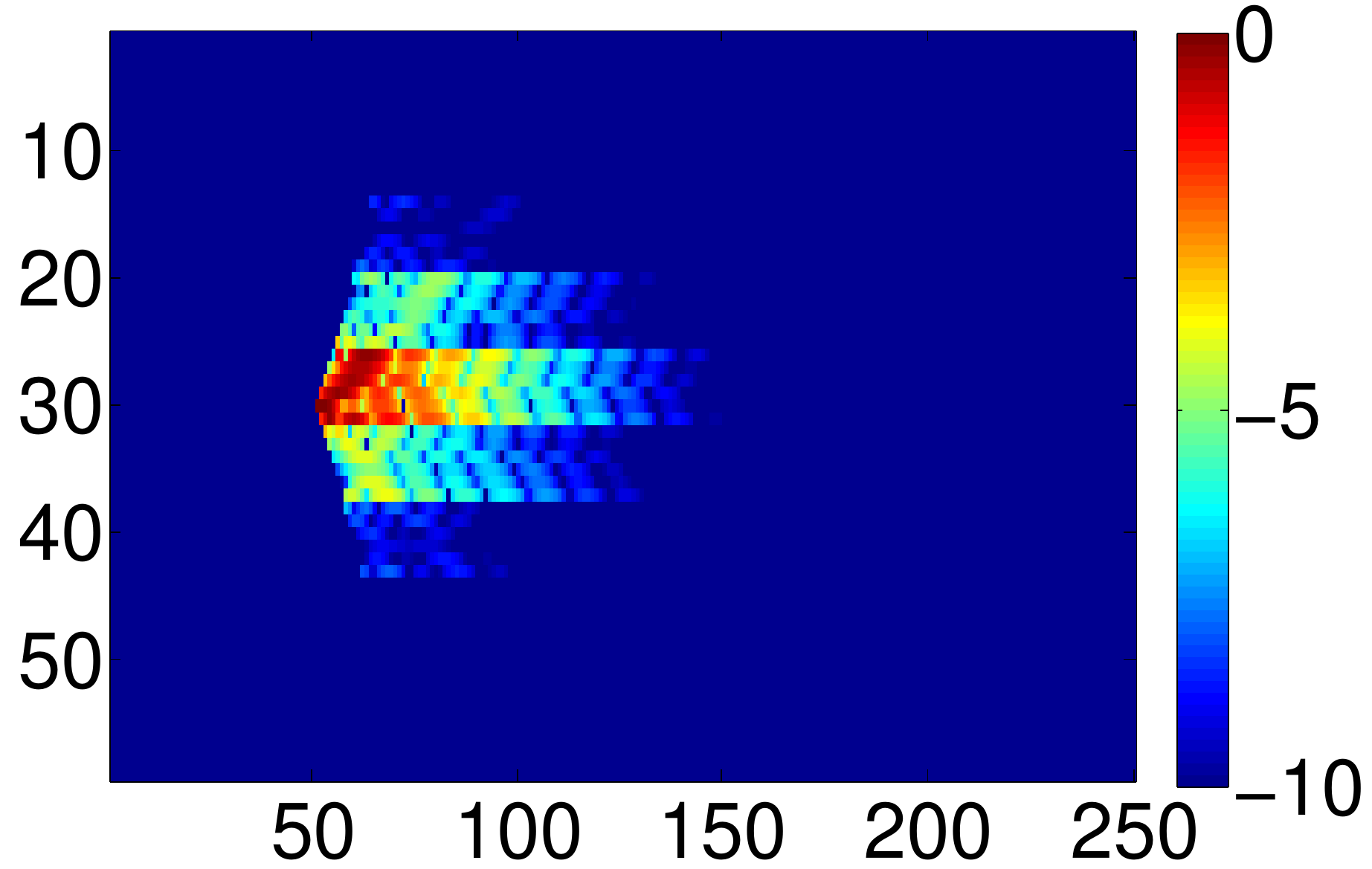}
          \label{fig:subfigure6}}
          \quad
        \subfigure[Control for Distributed]{%
          \includegraphics[width=0.22\textwidth]{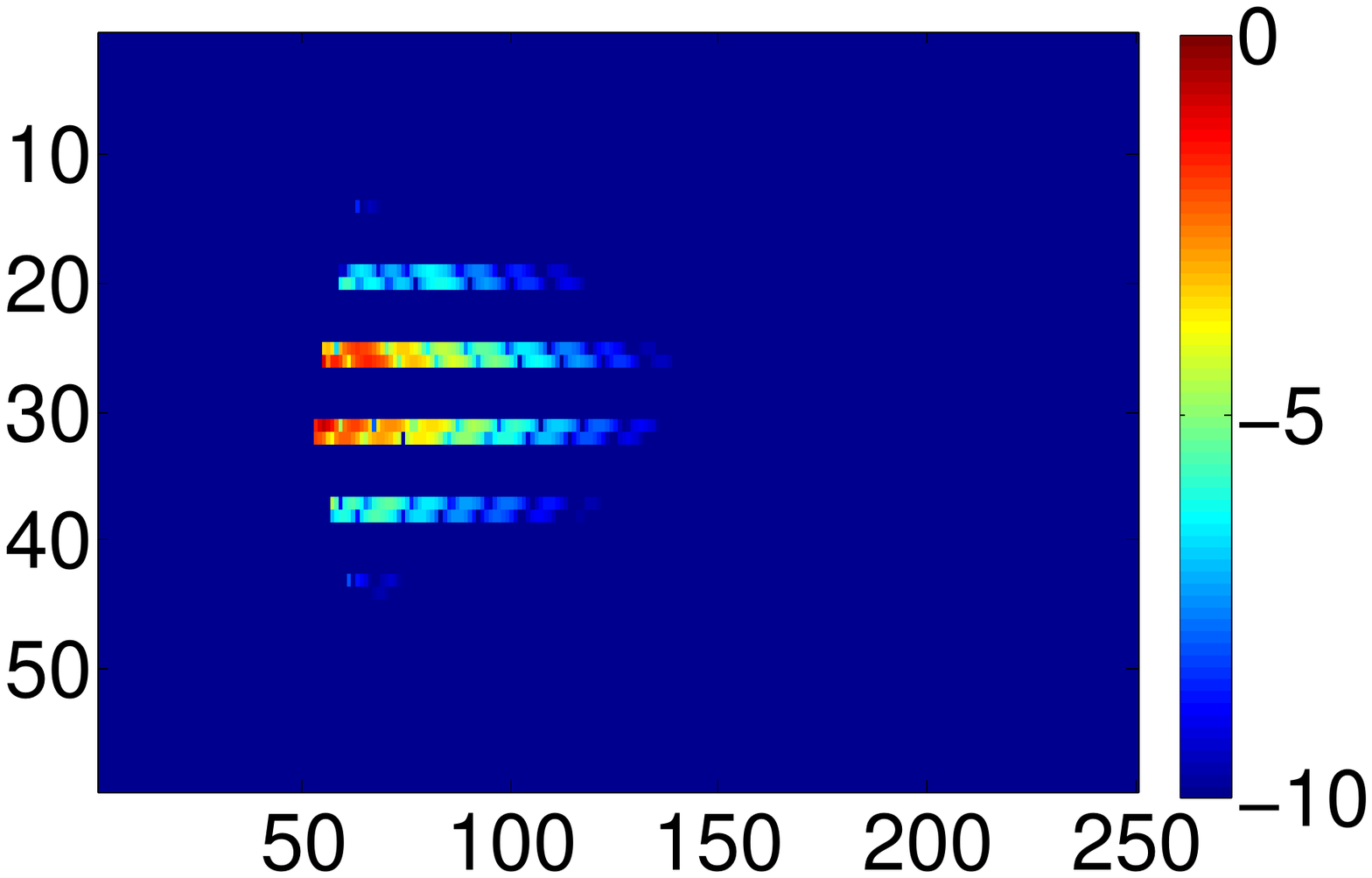}
          \label{fig:subfigure7}}          

        \subfigure[State for LLQR]{%
          \includegraphics[width=0.22\textwidth]{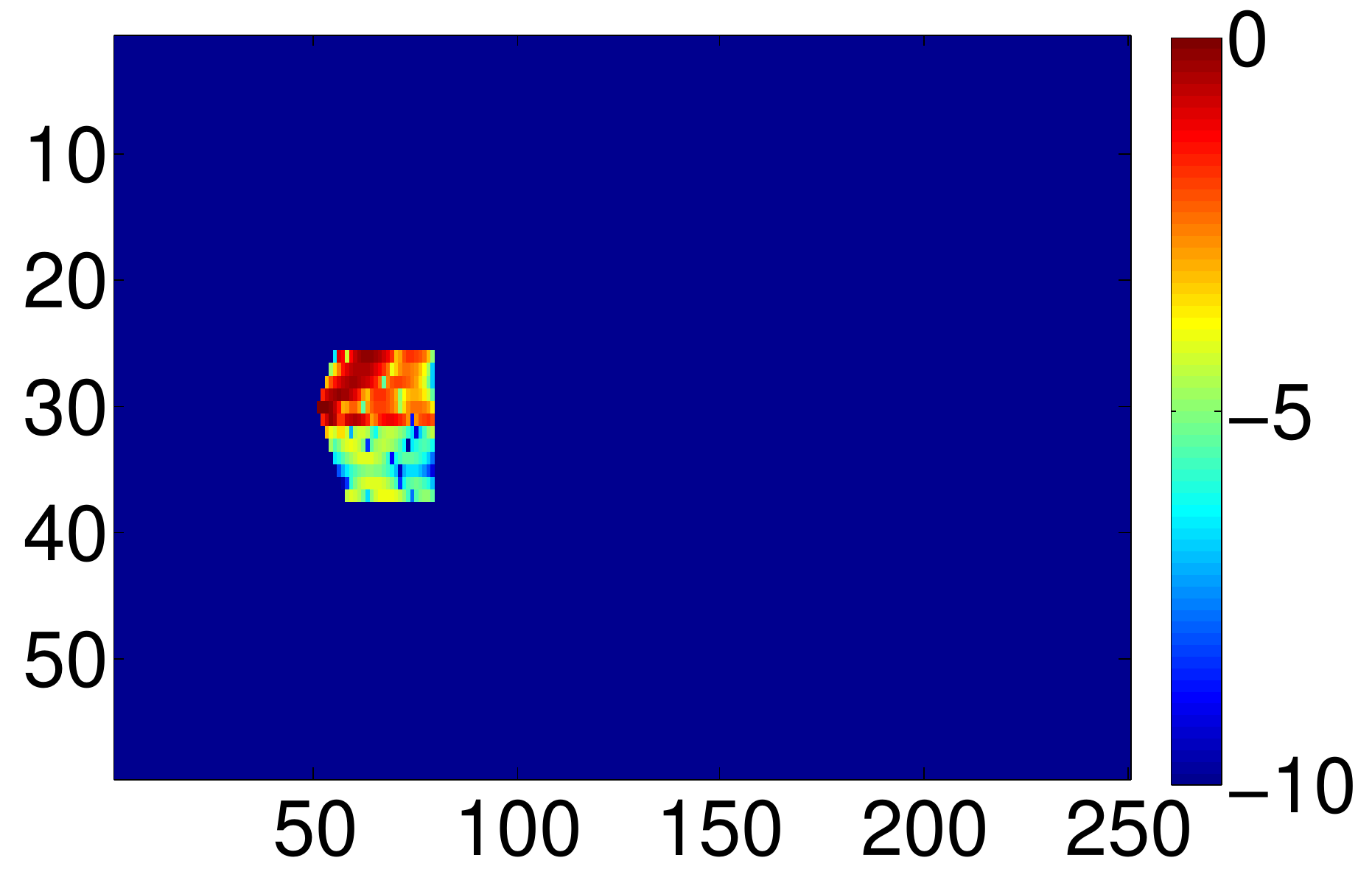}
          \label{fig:subfigure8}}
        \quad
        \subfigure[Control for LLQR]{%
          \includegraphics[width=0.22\textwidth]{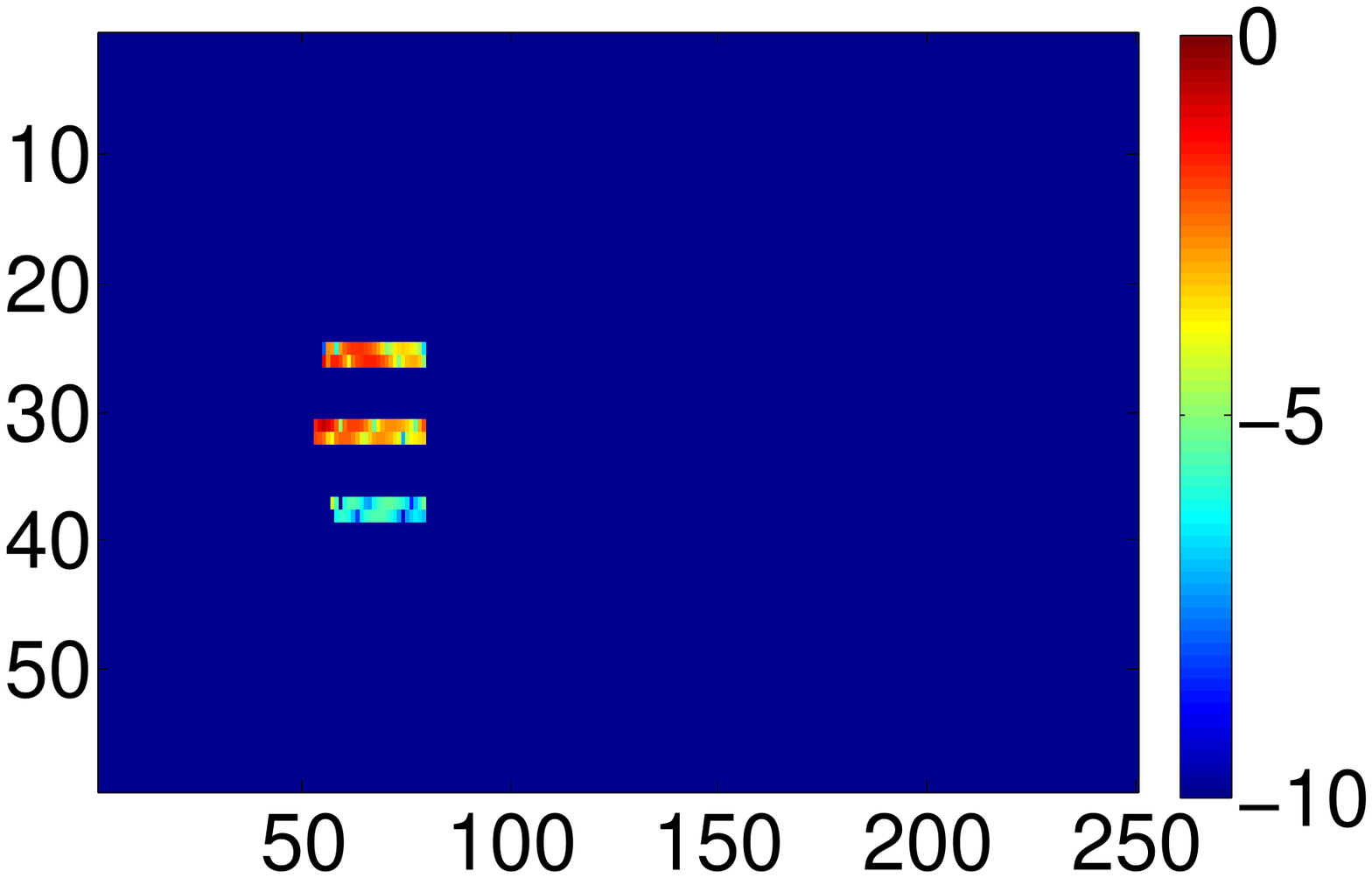}
          \label{fig:subfigure9}}
        \caption{ The log absolute value of state and control for a given disturbance at time $t = 50$. The horizontal axis represents time and the vertical axis represents state in space. The legend on the right shows the meaning of the colors.}\label{fig:schemes}
\end{figure}

Next, we calculate the optimal value for each controller and summarize the results in Table \ref{Table:1}. The objective is normalized with respect to that of the centralized $\mathcal{H}_2$ controller. Clearly, our localized control scheme can achieve similar performance to that of the centralized one. Numerical evidence seems to suggest that this property holds for most plants that are localizable. For an ideal $\mathcal{H}_2$ controller, the closed loop response decays exponentially in both time and space, as indicated by Figures \ref{fig:subfigure2} and \ref{fig:subfigure3}. Therefore, it is generally possible to find a favorable $(d,T)$ to synthesize the localized controller such that the closed loop transient response does not degrade much -- this is akin to the insight in \cite{2005_Bamieh_spatially_invariant} used to localize the implementation of funnel-causal systems. 

In summary, our result demonstrates that the LLQR optimal controller can be synthesized and implemented in a localized manner, but can achieve transient response close to that of an unconstrained optimal controller.

\begin{table}[h]
 \caption{Comparison between Different Control Schemes}
 \label{Table:1}
\begin{center}
    \begin{tabular}{ | l | l | l | l | l |}
    \hline
     & Ideal $\mathcal{H}_2$ & Delayed & Distributed& LLQR \\ \hline
    Comm Speed & Inf & 1.5 & 1.5 & 1.5 \\ \hline
    Control Time & Inf & Inf & Inf & 29 \\ \hline
    Locality & Max(58) & Max(58) & Max(58) & 9 \\ \hline
    Objective & 1 & 126.7882 & 1.1061 & 1.1142 \\ \hline
    \end{tabular}
\end{center}
\end{table}

{

\begin{figure}[ht!]
      \centering
      \includegraphics[width=0.45\textwidth]{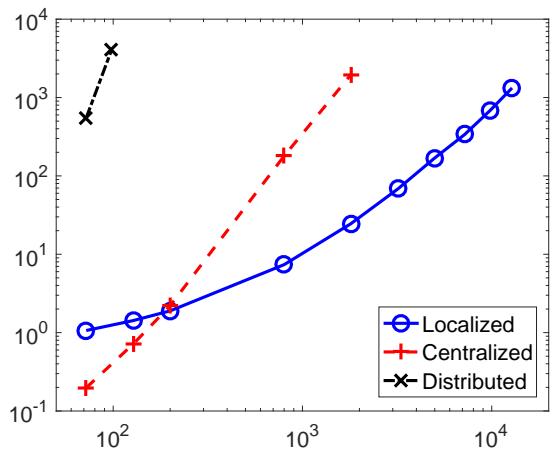}
      \caption{The horizontal axis denotes the number of states of the system, and the vertical axis is the computation time in seconds. The FIR horizon length is $T=7$.}
      \label{fig:Time}
\end{figure}

Figure \ref{fig:Time} clearly shows the computation time needed for the distributed controller grows rapidly when the size of problem increases. The gradient of the curve corresponding to  the localized controller is about $1.4$ which is slightly larger than the theoretical value of $1$. This issue is likely due to memory management issues and not related to the fundamental optimization problems. We further note that each of the distributed problems were solved in series on a single laptop, the performance would be significantly improved with a true parallel implementation.

}

\subsection{Robustness}
\label{sec:robust}
The previous sections highlighted the benefits of spatiotemporal locality: imposing FIR constraints trivially lead to finite-dimensional optimization problems (and simple filter-bank based controller implementations), and spatial locality leads to huge wins in terms of controller synthesis and implementation complexity.  However, both of these constraints can often be fairly restrictive in practice: indeed, if a system is stabilizable, but not controllable, we do not expect any such constraints to hold exactly.  This section extends the robustness results presented in Section \ref{sec:robust-time} to the infinite horizon setting, allowing us to circumvent these issues.  Further, we show that this allows us to formalize the folk theorem that ``good controllers are easy to compute.''

\subsubsection{A robustness result}

We begin with the infinite horizon analogue of the robustness result presented in Section \ref{sec:robust-time} that provides necessary and sufficient conditions under which a controller implemented using transfer matrices that only approximately satisfy the achievability constraint \eqref{eq:affine_cons} is stabilizing.  We then leverage this result and the use of ``virtual'' actuation and communication resources at design time to define a principled approach to approximate controller synthesis.

\begin{theorem}
Let $(\Phixh, \Phiuh, \ttf{\Delta})$ be a solution to
\begin{equation}
\begin{bmatrix} zI - A & -B_2 \end{bmatrix} \begin{bmatrix} \Phixh \\ \Phiuh \end{bmatrix} = I + \ttf{\Delta}, \quad \Phixh, \Phiuh \in \frac{1}{z}\RHinf.  \label{eq:near_sf1}
\end{equation}

Then, the controller implementation 
\begin{subequations}\label{eq:qf}
\begin{align}
\ttf{\hat{\delta}_x} &= \tf x - \tf{\hat{x}} \label{eq:qf1}\\
\tf u &= z \Phiuh \ttf{\hat{\delta}_x} \label{eq:qf2}\\
\tf{\hat{x}} &= (z \Phixh - I) \ttf{\hat{\delta}_x}. \label{eq:qf3} 
\end{align}
\end{subequations}
internally stabilizes the system $(A,B_2)$ if and only if $(I + \ttf{\Delta})^{-1}$ is stable.  Furthermore, the actual system responses achieved are given by
\begin{equation}
\begin{bmatrix} \tf x \\ \tf u \end{bmatrix} = \begin{bmatrix} \Phixh \\ \Phiuh \end{bmatrix}(I+ \ttf\Delta)^{-1}\ttf{\delta_x}.
\label{eq:robust-response}
\end{equation}
\label{thm:robust}
\end{theorem}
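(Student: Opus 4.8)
The plan is to mirror the internal-stability computation from the proof of Theorem~\ref{thm:param}: inject exogenous perturbations at every link of the interconnection between the plant~\eqref{eq:LTI} and the realization~\eqref{eq:qf}, eliminate the controller's internal variables, and read off the nine closed-loop maps. Both the stability characterization and the response~\eqref{eq:robust-response} then fall out of a single calculation in which $(I+\ttf\Delta)^{-1}$ is the only potentially unstable factor.

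Concretely, I would add a measurement perturbation $\ttf{\delta_y}$ to the state seen by the controller and an actuation perturbation $\ttf{\delta_u}$ to the plant input, so that~\eqref{eq:qf1} becomes $\ttf{\hat{\delta}_x} = \tf x + \ttf{\delta_y} - \tf{\hat{x}}$ and the plant becomes $(zI-A)\tf x = B_2 \tf u + \ttf{\delta_x}$ with $\tf u = z\Phiuh\ttf{\hat{\delta}_x} + \ttf{\delta_u}$. Substituting~\eqref{eq:qf3} into the perturbed~\eqref{eq:qf1} collapses the internal loop to $z\Phixh\ttf{\hat{\delta}_x} = \tf x + \ttf{\delta_y}$; feeding this and $\tf u$ into the plant equation gives $z\big[(zI-A)\Phixh - B_2\Phiuh\big]\ttf{\hat{\delta}_x} = \ttf{\delta_x} + (zI-A)\ttf{\delta_y} + B_2\ttf{\delta_u}$. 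The decisive step is to invoke the residual identity~\eqref{eq:near_sf1} to replace the bracketed operator by $I+\ttf\Delta$, so that, writing $\tf M := (I+\ttf\Delta)^{-1}$, one obtains $\ttf{\hat{\delta}_x} = \tf M\big[\frac{1}{z}\ttf{\delta_x} + (I-\frac{1}{z}A)\ttf{\delta_y} + \frac{1}{z}B_2\ttf{\delta_u}\big]$. Back-substitution yields all nine maps from $(\ttf{\delta_x},\ttf{\delta_y},\ttf{\delta_u})$ to $(\tf x,\tf u,\ttf{\hat{\delta}_x})$, each a stable factor times $\tf M$ times a stable factor, plus constant/FIR terms (e.g.\ the map to $\tf x$ from $\ttf{\delta_x}$ is $\Phixh\tf M$, and the map to $\ttf{\hat{\delta}_x}$ from $\ttf{\delta_x}$ is $\frac{1}{z}\tf M$).

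Both directions are then immediate. Because $\Phixh,\Phiuh,z\Phixh,z\Phiuh\in\RHinf$ and $A,B_2,\frac{1}{z}$ are stable, stability of $\tf M$ renders every entry stable, which gives the ``if'' direction; setting $\ttf{\delta_y}=\ttf{\delta_u}=0$ reads off $\tf x=\Phixh\tf M\ttf{\delta_x}$ and $\tf u=\Phiuh\tf M\ttf{\delta_x}$, precisely~\eqref{eq:robust-response}. For the converse I would use the bottom row: internal stability forces both $\frac{1}{z}\tf M$ (from $\ttf{\delta_x}$) and $\tf M(I-\frac{1}{z}A)=\tf M-\frac{1}{z}\tf M A$ (from $\ttf{\delta_y}$) to be stable, whence $\big(\tf M-\frac{1}{z}\tf M A\big)+\big(\frac{1}{z}\tf M\big)A=\tf M$ shows $(I+\ttf\Delta)^{-1}$ is stable.

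The main obstacle is well-posedness rather than algebra: the elimination is valid only once $(I+\ttf\Delta)^{-1}$ exists as a proper transfer matrix. I would either carry this as a standing well-posedness assumption, or derive it from~\eqref{eq:near_sf1} by noting $(zI-A)\Phixh - B_2\Phiuh \to \Phi_x[1]$ as $z\to\infty$, so that $I+\ttf\Delta$ is biproper whenever $\Phi_x[1]$ is invertible (and equal to $I$ at infinity in the strictly proper case $\ttf\Delta\in\frac{1}{z}\RHinf$ that arises in every application). The only delicate bookkeeping is the $-I$ term in the $\ttf{\delta_y}$-to-$\tf x$ entry and the placement of the right-multiplications by $A$ and $B_2$; getting these exactly right is what secures the cancellation $(\tf M-\frac{1}{z}\tf M A)+(\frac{1}{z}\tf M)A=\tf M$ on which the converse rests.
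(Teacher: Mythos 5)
Your proposal is correct and takes essentially the same route as the paper's proof: inject perturbations $(\ttf{\delta_x},\ttf{\delta_y},\ttf{\delta_u})$, collapse the controller's internal loop, invoke \eqref{eq:near_sf1} to replace $(zI-A)\Phixh - B_2\Phiuh$ by $I+\ttf{\Delta}$, and read off the nine closed-loop maps (the paper's Table \ref{Table:200}), from which both the stability equivalence and \eqref{eq:robust-response} follow. Your two refinements -- recovering $(I+\ttf{\Delta})^{-1}$ as a stable combination of two closed-loop maps for the converse, and flagging properness/well-posedness of the inverse -- are slightly more careful than the paper's terse treatment but do not change the argument.
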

\begin{proof}
For a state-feedback system \eqref{eq:LTI} with controller implementation \eqref{eq:qf} using transfer matrices $\{\Phixh, \Phiuh\}$, the following holds
\begin{subequations}
\begin{align}
(zI-A) \tf x &= B_2 \tf u + \ttf{\delta_x} \label{eq:qf4}\\
\tf u &= z \Phiuh \ttf{\hat{\delta}_x} + \ttf{\delta_u} \label{eq:qf5}\\
\tf x &= z \Phixh \ttf{\hat{\delta}_x} - \ttf{\delta_y}. \label{eq:qf6},
\end{align}
\end{subequations}
where $\ttf{\delta_y}$ and $\ttf{\delta_u}$ are respectively perturbations on the measurement and control action (as illustrated in Figure \ref{fig:sf_controller}) introduced to verify the internal stability of the resulting closed loop system.

Substituting \eqref{eq:qf5} and \eqref{eq:qf6} into \eqref{eq:qf4}, we have
\begin{equation}
z(zI-A) \Phixh \ttf{\hat{\delta}_x} - (zI-A)\ttf{\delta_y} = z B_2 \Phiuh \ttf{\hat{\delta_x}} + B_2 \ttf{\delta_u} + \ttf{\delta_x}. \nonumber
\end{equation}
Moving $\ttf{\hat{\delta_x}}$ to the left-hand-side and using relation \eqref{eq:near_sf1}, we have 
\begin{equation}
z (I+\ttf{\Delta}) \ttf{\hat{\delta}_x} = (zI-A)\ttf{\delta_y} + B_2 \ttf{\delta_u} + \ttf{\delta_x}, \nonumber
\end{equation}
Let $\ttf{I_\Delta} := (I+\ttf{\Delta})^{-1}$. We then have that the closed loop transfer matrices from $(\ttf{\delta_x},$ $\ttf{\delta_y},$ $\ttf{\delta_u})$ to $\ttf{\hat{\delta}_x}$ are given by
\begin{equation}
\ttf{\hat{\delta}_x} = \frac{1}{z} \ttf{I_\Delta} \ttf{\delta_x} + \ttf{I_\Delta} (I-\frac{1}{z}A)\ttf{\delta_y} + \frac{1}{z} \ttf{I_\Delta} B_2 \ttf{\delta_u}. \label{eq:qf7}
\end{equation}
Substituting \eqref{eq:qf7} into \eqref{eq:qf5} and \eqref{eq:qf6}, we have the closed loop transfer matrices from $(\ttf{\delta_x},$ $\ttf{\delta_y},$ $\ttf{\delta_u})$ to $(\tf x,$ $\tf u$, $\ttf{\hat{\delta}_x})$ summarized in Table \ref{Table:200}. Clearly, if $\ttf{I_\Delta}$ is stable, then all the transfer matrices in Table \ref{Table:200} are stable. If $\ttf{I_\Delta}$ is unstable, then the closed loop maps from $\ttf{\delta_x}$ to $\ttf{\hat{\delta}_x}$ will be unstable, and the controller does not internally stabilize the system. Therefore, the stability of $\ttf{I_\Delta} = (I + \ttf{\Delta})^{-1}$ is necessary and sufficient condition for the controller implementation \eqref{eq:qf} to internally stabilize the system $(A,B_2)$.  Finally, Table \ref{Table:200} shows that the actual response achieved from $\ttf{\delta_x} \to (\tf x, \tf u)$ is given by \eqref{eq:robust-response}.
\end{proof}

\begin{table}[h!]
 \caption{Closed Loop Maps With Non-localizability}
 \label{Table:200}
\begin{center}
\renewcommand{\arraystretch}{2}
    \begin{tabular}{| c | c | c | c | c |}
    \hline 
    & $\ttf{\delta_x}$ & $\ttf{\delta_y}$ & $\ttf{\delta_u}$ \\ \hline
    $\tf x$ & $\Phixh \ttf{I_\Delta}$ & $\Phixh \ttf{I_\Delta} (zI - A) - I$ & $\Phixh \ttf{I_\Delta} B_2$ \\ \hline
    $\tf u$ & $\Phiuh \ttf{I_\Delta}$ & $\Phiuh \ttf{I_\Delta} (zI - A)$ & $I + \Phiuh \ttf{I_\Delta} B_2$ \\ \hline
    $\ttf{\hat{\delta}_x}$ & $\frac{1}{z} \ttf{I_\Delta}$ & $\ttf{I_\Delta} (I - \frac{1}{z} A)$ & $\frac{1}{z} \ttf{I_\Delta} B_2$ \\ \hline
    \end{tabular}
\end{center}
\end{table}

Theorem \ref{thm:robust} can now be combined with small gain theorems to provide simple sufficient conditions for robust stability.

\begin{coro}[Sufficient conditions for robustness]
Under the conditions of Theorem \ref{thm:robust}, the closed loop system is stable if $\max\{\|\ttf{\Delta}\|_{\mathcal{H}_\infty}, \norm{\ttf\Delta}_{\mathcal{L}_1},\norm{\ttf\Delta^\top}_{\mathcal{L}_1}\} <1$.
\label{coro:robust}
\end{coro}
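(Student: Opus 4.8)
The plan is to reduce the claim directly to the stability criterion already established in Theorem~\ref{thm:robust}. That theorem shows the controller implementation~\eqref{eq:qf} internally stabilizes $(A,B_2)$ if and only if $(I+\ttf\Delta)^{-1}$ is stable, so it suffices to prove that each of the three hypotheses $\|\ttf\Delta\|_{\mathcal{H}_\infty}<1$, $\norm{\ttf\Delta}_{\mathcal{L}_1}<1$, or $\norm{\ttf\Delta^\top}_{\mathcal{L}_1}<1$ forces $(I+\ttf\Delta)^{-1}\in\RHinf$. This is precisely a small-gain statement, with $\ttf\Delta$ playing the role of the loop gain in feedback with the identity.

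First I would record that $\ttf\Delta$ is itself stable. This is immediate from~\eqref{eq:near_sf1}: since $\Phixh,\Phiuh\in\frac{1}{z}\RHinf$, we have $I+\ttf\Delta=(zI-A)\Phixh-B_2\Phiuh\in\RHinf$ (the factor $zI$ cancels the strict properness of $\Phixh$, while $A$ and $B_2$ act as stable FIR operators), and hence $\ttf\Delta\in\RHinf$. Consequently every term of the Neumann series $(I+\ttf\Delta)^{-1}=\sum_{k=0}^{\infty}(-\ttf\Delta)^{k}$ is a stable transfer matrix, so stability of $(I+\ttf\Delta)^{-1}$ reduces to convergence of this series to an element of $\RHinf$. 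The key observation is then that each quantity in the hypothesis is a submultiplicative Banach-algebra norm on $\RHinf$: the $\mathcal{H}_\infty$ norm and the $\mathcal{L}_1$ norm (the induced $\ell_\infty\to\ell_\infty$ gain) both satisfy $\norm{\tf G\tf H}\le\norm{\tf G}\norm{\tf H}$, and because $(\tf G\tf H)^\top=\tf H^\top\tf G^\top$, the map $\tf G\mapsto\norm{\tf G^\top}_{\mathcal{L}_1}$ is submultiplicative as well. Whenever the relevant norm of $\ttf\Delta$ is strictly below $1$, the partial sums of $\sum_k(-\ttf\Delta)^k$ are Cauchy in that norm -- using $\norm{(\ttf\Delta^\top)^{k}}_{\mathcal{L}_1}\le\norm{\ttf\Delta^\top}_{\mathcal{L}_1}^{k}$ in the transpose case -- and therefore converge in the corresponding space of stable transfer matrices, giving $(I+\ttf\Delta)^{-1}\in\RHinf$. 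Theorem~\ref{thm:robust} then yields internal stability.

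I expect the only genuine subtlety to be the transpose condition $\norm{\ttf\Delta^\top}_{\mathcal{L}_1}<1$, which does not fit the plain small-gain/Neumann template verbatim; the clean way to handle it is to observe that $\tf G\mapsto\norm{\tf G^\top}_{\mathcal{L}_1}$ is again a submultiplicative norm (equivalently, it is the induced $\ell_1\to\ell_1$ gain of $\tf G$, which is an operator norm and hence submultiplicative), after which the same convergence argument applies. The $\mathcal{H}_\infty$ and $\mathcal{L}_1$ cases are then routine once submultiplicativity is invoked.
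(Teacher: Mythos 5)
Your proposal is correct and takes essentially the same route the paper intends: the paper states the corollary without an explicit proof, remarking only that Theorem~\ref{thm:robust} can be "combined with small gain theorems," and your argument (reduce to stability of $(I+\ttf{\Delta})^{-1}$, note $\ttf\Delta\in\RHinf$ from \eqref{eq:near_sf1}, then run the Neumann-series/submultiplicativity small-gain argument in each norm, handling the transpose case as the induced $\ell_1\to\ell_1$ gain) is precisely that reasoning made explicit. Note that you in fact prove the stronger statement that any one of the three norm conditions suffices, which subsumes the stated hypothesis that their maximum is below one.
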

%
%


It therefore follows that if a set of transfer matrices $\{\hat\Phix, \hat\Phiu, \ttf \Delta\}$ satisfy the following constraints:
\begin{equation}
\begin{array}{l}
\begin{bmatrix} zI - A & -B_2 \end{bmatrix} \begin{bmatrix} \Phixh \\ \Phiuh \end{bmatrix} = I + \ttf{\Delta} , \ \Phixh,\Phiuh \in \frac{1}{z}\RHinf, \ \ \|\ttf{\Delta}\|_{\bullet} <1,
\end{array}
\label{eq:nearly}
\end{equation}
for $\bullet \in \{ \mathcal{H}_\infty, \mathcal{\Ell}_1, \mathcal{E}_1\}$, then a controller \eqref{eq:qf} implemented using the transfer matrices $\{\Phixh, \Phiuh \}$ is globally stabilizing for the system dynamics \eqref{eq:LTI}.

%

From Theorem \ref{thm:robust}, we see that the actual closed loop map from the disturbance $\ttf \delta_x$ to state $\tf x$ and control action $\tf u$ achieved by a controller implemented using the approximate system responses $\{ \Phixh, \Phiuh \}$  is given by
\begin{equation}
\begin{bmatrix} \tf x \\ \tf u \end{bmatrix} = \begin{bmatrix} \Phixh \\ \Phiuh \end{bmatrix} (I + \ttf \Delta)^{-1} \ttf \delta_x.
\label{eq:actual_maps}
\end{equation}

Integrating the approximate achievability constraint \eqref{eq:near_sf1}, the sufficient conditions for stability imposed in by Corollary \ref{coro:robust}, and the system responses that they achieve \eqref{eq:actual_maps} into a SLS optimization problem, we obtain the following:

\begin{equation}
\begin{array}{rl}
\underset{\Phixh, \Phiuh, \ttf \Delta }{\text{minimize}} & \left\| \begin{bmatrix} C_1 & D_{12}\end{bmatrix} \begin{bmatrix} \Phixh \\ \Phiuh \end{bmatrix} (I + \ttf \Delta)^{-1}B_1  \right\| \\
\text{s.t.} & \begin{bmatrix} zI - A & - B_2 \end{bmatrix}\begin{bmatrix}\Phixh \\ \Phiuh\end{bmatrix} = I + \Delta \\
& \begin{bmatrix} \Phixh \\ \Phiuh \end{bmatrix} \in \s, \ \norm{\ttf\Delta} < 1\\
& \Phixh,\Phiuh\in \frac{1}{z}\RHinf.
\end{array}
\label{eq:opt1}
\end{equation}

Optimization problem \eqref{eq:opt1} is non-convex: however, if the norm chosen in the objective function is sub-multiplicative, we can upper bound the objective function by 
\begin{equation}
\frac{\|B_1\|}{1- \| \ttf \Delta\|}\left\| \begin{bmatrix} C_1 & D_{12}\end{bmatrix} \begin{bmatrix} \Phixh \\ \Phiuh \end{bmatrix}\right\|,
\label{eq:bound}
\end{equation}
where we have used the sub-multiplicative property, that $\|\ttf{\Delta}\|<1$ and the power-series expansion of the the inverse $(I+\ttf{\Delta})^{-1} = \sum_{k=0}^\infty \ttf{\Delta}^k$.  As the next lemma shows, this bound is indeed quasi-convex and hence can be effectively optimized.

\begin{lemma} \label{lem:innerouter}
For a convex function $f:\mathcal{D}\to\mathbb{R}$, a non-negative convex function $g:\mathcal{D}\to\mathbb{R}_+$, both defined on some domain $\mathcal{D}$, and a convex set $C\subseteq \mathcal{D}$, it holds that 
%

\begin{equation}
\min_{x\in C} \frac{f(x)}{1-g(x)} = \min_{\gamma\in[0,1)} \tfrac{1}{1-\gamma} \min_{x\in C} \{ f(x) ~|~ g(x)\leq\gamma\}
\end{equation}
\end{lemma}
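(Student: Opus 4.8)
The plan is to prove the identity by establishing the two matching inequalities, reading both sides as infima over their natural feasible regions: the left-hand minimization is implicitly over $\{x\in C : g(x)<1\}$, so that the denominator $1-g(x)$ is strictly positive, while the right-hand inner problem $\min\{f(x) : g(x)\le\gamma,\ x\in C\}$ is taken to be $+\infty$ when its feasible set is empty (so that infeasible small-$\gamma$ cases are harmless in the outer infimum). Write $L$ for the left-hand side and $R$ for the right-hand side. Conceptually the equality is just a reparameterization in which the auxiliary scalar $\gamma$ plays the role of an upper bound on $g(x)$; I would emphasize that convexity of $f$, $g$, and $C$ is \emph{not} needed for the identity itself — it is needed only afterwards to guarantee that each inner problem is convex, so that the one-dimensional outer search over $\gamma$ produces a quasi-convex and therefore tractable problem, which is the real reason the lemma is invoked here.

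First I would prove $R\le L$. Given any feasible $x\in C$ with $g(x)<1$, I set $\gamma:=g(x)$; non-negativity of $g$ is exactly what guarantees $\gamma\in[0,1)$. Then $x$ is itself feasible for the inner problem at this $\gamma$, so $\min\{f(x') : g(x')\le\gamma\}\le f(x)$, and multiplying by the positive scalar $1/(1-\gamma)$ gives $\frac{1}{1-\gamma}\min\{f(x') : g(x')\le\gamma\}\le \frac{f(x)}{1-g(x)}$. Since the left side is the value of the outer objective at this particular $\gamma$, it is an upper bound for $R$, whence $R\le\frac{f(x)}{1-g(x)}$; taking the infimum over all feasible $x$ yields $R\le L$. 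This direction uses nothing about the sign of $f$.

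Next I would prove $L\le R$, and this is the step where non-negativity of $f$ is essential — in fact it is the main obstacle, because without $f\ge 0$ the identity is simply false (take $f\equiv -1$, $g(x)=x$, and $C=[0,c]$ with $c<1$: then $R=-\infty$ while $L=-1/(1-c)$ is finite). The intended application does satisfy $f\ge 0$, since $f$ is the non-negative norm term in \eqref{eq:bound}, so I would flag this hypothesis explicitly and then argue as follows. Fix any $\gamma\in[0,1)$ and any $x\in C$ with $g(x)\le\gamma$. Then $0<1-\gamma\le 1-g(x)$, hence $\frac{1}{1-g(x)}\le\frac{1}{1-\gamma}$, and because $f(x)\ge 0$ this implies $\frac{f(x)}{1-g(x)}\le\frac{f(x)}{1-\gamma}$. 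Since such an $x$ is feasible for the left-hand problem, $L\le\frac{f(x)}{1-g(x)}\le\frac{f(x)}{1-\gamma}$; taking the infimum first over the feasible $x$ and then over $\gamma\in[0,1)$ gives $L\le R$.

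Combining the two inequalities gives $L=R$, which is the claim. The only genuine care required in the write-up is the bookkeeping of the strictly positive denominators $1-g(x)$ and $1-\gamma$, so that the comparisons between fractions do not flip sign, together with the explicit use of $f\ge 0$ in the second direction; everything else is routine.
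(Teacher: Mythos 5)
Your proof is correct and, at its core, follows the same route as the paper's: the paper's (two-line, sketched) argument introduces $\gamma$ as an auxiliary upper bound on $g(x)$, rewrites the left-hand side as the joint minimization $\min_{\gamma,\, x\in C}\left\{ f(x)/(1-\gamma) \;\middle|\; g(x)\leq \gamma \right\}$, and then splits this into nested minimizations; your two inequalities $R\leq L$ and $L\leq R$ are exactly the rigorous elaboration of those two equalities. What you add, and what the paper glosses over, is the observation that collapsing the joint problem onto $\gamma = g(x)$ is only valid when $f\geq 0$: for fixed $x$, minimizing $f(x)/(1-\gamma)$ over $\gamma\in[g(x),1)$ selects $\gamma=g(x)$ precisely because $f(x)\geq 0$, and your counterexample ($f\equiv -1$, $g(x)=x$, $C=[0,c]$ with $c<1$, where the right-hand side is $-\infty$ while the left-hand side equals $-1/(1-c)$) shows the identity is genuinely false under the stated hypotheses, which only require $f$ to be convex and real-valued. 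This gap is harmless in the paper's application, where $f$ is a norm of the system response and hence non-negative, but your write-up is the more careful one: it makes the needed sign condition explicit, and it also correctly notes that convexity of $f$, $g$, and $C$ plays no role in the identity itself, being needed only for the subsequent claim that the resulting problem is quasi-convex and hence tractable via a one-dimensional search over $\gamma$.
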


\begin{proof}(Sketch)
As $g(x)$ is non-negative, we can write
\begin{align*}
\min_{x\in C} \frac{f(x)}{1-g(x)} &=  \min_{\gamma,x\in C} \frac{f(x)}{1-\gamma} ~~\text{s.t.}~~ g(x)\leq\gamma\\
&=  \min_{\gamma}\tfrac{1}{1-\gamma}\min_{x\in C} \{f(x) ~|~ g(x)\leq\gamma{}\},
\end{align*}
which proves the result.  Finally, it is easily verified that the above problem is quasi-convex, and hence uni-modal in $\gamma$.
\end{proof}

Thus applying the bound \eqref{eq:bound} and Lemma \ref{lem:innerouter}, we can compute an upper bound to optimization problem \eqref{eq:opt1} by solving the following quasi-convex problem:
\begin{equation}
\begin{array}{rl}
\min_{\gamma \in [0,1)} \frac{1}{1-\gamma}\underset{\Phixh, \Phiuh, \ttf \Delta }{\text{minimize}} & \left\| \begin{bmatrix} C_1 & D_{12}\end{bmatrix} \begin{bmatrix} \Phixh \\ \Phiuh \end{bmatrix} \right\|\\
\text{s.t.}  & \begin{bmatrix} zI  - A & - B_2 \end{bmatrix}\begin{bmatrix}\Phixh \\ \Phiuh\end{bmatrix} = I + \Delta \\
& \begin{bmatrix} \Phixh \\ \Phiuh \end{bmatrix} \in \s, \ \norm{\ttf\Delta} \leq \gamma\\
& \Phixh,\Phiuh\in \frac{1}{z}\RHinf.
\end{array}
\label{eq:opt_relaxed}
\end{equation}

We now give two examples of how this robustness result can be used.  The first is an infinite horizon analogue to the modeling error result presented in Section \ref{sec:robust-time}, and the second shows how spatiotemporal approximations to the infinite horizon SLS problem can be taken and still achieve provably near optimal performance while still benefiting from the favorable computational properties of localized synthesis.

\subsubsection{Robust control with sub-optimality guarantees}
We return to the problem setting where estimates $(\Ah, \Bh)$ of a true system $(A,B)$ satisfy
\[\|\Delta_A\|_{2\to 2}\leq\epsilon_A,~~\|\Delta_B\|_{2\to 2}\leq\epsilon_B\]
with $\Delta_A := \Ah-A$ and $\Delta_B := \Bh-B$ and where we wish to minimize the worst-case LQR cost over the parametric uncertainty.

Leveraging the results of the previous section, and following a similar argument as that in Section \ref{sec:robust-time}, we can cast a robust LQR problem as follows

\begin{align}\label{eq:robustLQRbnd}
\begin{split}
 \minimize_{\gamma\in[0,1)}\frac{1}{1 - \gamma}&\min_{\Phixh,  \Phiuh} \left\|\begin{bmatrix} Q^\frac{1}{2} & 0 \\ 0 & R^\frac{1}{2}\end{bmatrix}\begin{bmatrix} \Phixh \\  \Phiuh \end{bmatrix}\right\|_{\htwo}\\
& \text{s.t.} \begin{bmatrix}zI-\Ah&-\Bh\end{bmatrix}\begin{bmatrix} \Phixh \\  \Phiuh \end{bmatrix} = I,~~\sqrt{2}\left\|\begin{bmatrix} {\epsilon_A}  \Phixh \\ {\epsilon_B} \Phiuh \end{bmatrix} \right\|_{\hinf}\leq \gamma\\
&\qquad \Phixh,  \Phiuh  \in\frac{1}{z}\mathcal{RH}_\infty.
 \end{split}
\end{align}
We note that this optimization objective is jointly quasi-convex in $(\gamma,  \Phixh, \Phiuh)$. Hence, as a function of $\gamma$ alone the objective is quasi-convex, and furthermore is smooth in the feasible domain. Therefore, the outer optimization with respect to $\gamma$ can effectively be solved with methods like golden section search. We remark that the inner optimization is a convex problem, though an infinite dimensional one, but as we show in the next subsection, a simple finite dimensional approximation to this problem leads to nearly equivalent performance.

We further remark that because $\gamma \in [0,1)$, any feasible solution $(\Phixh,  \Phiuh)$ to optimization problem \eqref{eq:robustLQRbnd} generates a controller $\tf \Kh =  \Phiuh  \Phixh^{-1}$ satisfying the conditions of Corollary \ref{coro:robust}, and hence stabilizes the true system $(A,B)$.  Therefore, even if the solution is approximated, as long as it is feasible, it will be stabilizing.  We show now that for sufficiently small estimation error bounds $\epsilon_A$ and $\epsilon_B$, we can further bound the sub-optimality of the performance achieved by our robustly stabilizing controller relative to that achieved by the optimal LQR controller $\trueK$.

\begin{theorem}[Theorem 4.1, \cite{learning-lqr}]
\label{thm:lqr_cost}
Let $J_\star$ denote the minimal LQR cost achievable by any controller for the dynamical system with transition matrices $(A,B)$,  $J(A,B,\tf K)$ be the cost achieved by any controller $\tf K$ on the system defined by transition matrices $(A,B)$, and let $\trueK$ denote the optimal contoller. Let $(\Ah,\Bh)$ be estimates of the transition matrices such that $\norm{\Delta_A}_{2\to 2} \leq \epsilon_A$, $\norm{\Delta_B}_{2\to 2} \leq \epsilon_B$. Then, if $ \tf\Kh= \Phiuh\Phixh^{-1}$ is synthesized via \eqref{eq:robustLQRbnd} the relative error in the LQR cost is
\begin{align} \label{eq:lqr_bound}
\frac{J(\trueA, \trueB, \tf\Kh) - J_\star }{J_\star} \leq 5 (\epsilon_A + \epsilon_B\norm{\trueK}_{2 \to 2})\hinfnorm{(zI-(A+B\trueK))^{-1}} \:,
\end{align}
as long as $(\epsilon_A + \epsilon_B\norm{\trueK}_{2 \to 2})\|(zI-(A+B\trueK))^{-1}\|_{\hinf}\leq 1/5$.
\end{theorem}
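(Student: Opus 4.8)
Since the statement is quoted verbatim as Theorem~4.1 of \cite{learning-lqr}, the cleanest route is to invoke that reference directly; to make the argument self-contained I would reconstruct it in three steps, all reusing machinery developed above. \emph{Step 1: the achieved cost is controlled by the surrogate's optimal value.} Let $(\gamma_\star,\Phixh,\Phiuh)$ attain the optimum of \eqref{eq:robustLQRbnd}. Substituting these estimate-feasible responses into the \emph{true} dynamics gives $\begin{bmatrix} zI-\trueA & -\trueB\end{bmatrix}\begin{bmatrix}\Phixh\\\Phiuh\end{bmatrix} = I + \ttf{\Delta}$ with $\ttf{\Delta} = \Delta_A\Phixh+\Delta_B\Phiuh$, and the feasibility constraint of \eqref{eq:robustLQRbnd} together with Proposition~3.5 of \cite{learning-lqr} forces $\|\ttf\Delta\|_{\hinf}\le\gamma_\star<1$. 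Theorem~\ref{thm:robust} then guarantees that $\tf\Kh=\Phiuh\Phixh^{-1}$ stabilizes $(\trueA,\trueB)$ and realizes the response $\begin{bmatrix}\Phixh\\\Phiuh\end{bmatrix}(I+\ttf\Delta)^{-1}$. Applying submultiplicativity and the Neumann-series bound $\|(I+\ttf\Delta)^{-1}\|_{\hinf}\le(1-\gamma_\star)^{-1}$ exactly as in the passage from \eqref{eq:opt1} to \eqref{eq:bound}, the square root of the realized LQR cost $J(\trueA,\trueB,\tf\Kh)$ is bounded by the optimal value of \eqref{eq:robustLQRbnd}.

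\emph{Step 2 (the crux): the surrogate's optimal value is controlled by $J_\star$.} Let $\trueK$ be the true optimal controller, with responses $\Phix^\star=(zI-(A+B\trueK))^{-1}$ and $\Phiu^\star=\trueK\Phix^\star$, so that $J_\star$ is the squared $\htwo$ norm of the weighted pair $\{\Phix^\star,\Phiu^\star\}$. These satisfy the \emph{true} achievability constraint but not the estimate-based one; substituting them into $\begin{bmatrix}zI-\Ah & -\Bh\end{bmatrix}$ leaves a residual $I-\ttf{\Delta}_\star$ with $\ttf{\Delta}_\star=\Delta_A\Phix^\star+\Delta_B\Phiu^\star$. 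Writing $\zeta:=(\epsilon_A+\norm{\trueK}_{2\to2}\epsilon_B)\,\hinfnorm{(zI-(A+B\trueK))^{-1}}$, Proposition~3.5 bounds $\|\ttf{\Delta}_\star\|_{\hinf}\le\sqrt2\,\zeta$, which is strictly below $1$ whenever $\zeta\le1/5$. Hence $(I-\ttf{\Delta}_\star)^{-1}\in\RHinf$, and the corrected pair $\{\Phix^\star(I-\ttf{\Delta}_\star)^{-1},\,\Phiu^\star(I-\ttf{\Delta}_\star)^{-1}\}$ \emph{does} satisfy the estimate-based achievability constraint. A submultiplicative estimate shows it also meets the robustness constraint $\sqrt2\|[\epsilon_A\,\cdot\,;\epsilon_B\,\cdot\,]\|_{\hinf}\le\gamma$ for a suitable $\gamma<1$ (which exists precisely because $\zeta\le1/5$), so it is feasible for \eqref{eq:robustLQRbnd}, and its objective value is at most $\sqrt{J_\star}$ divided by a product of factors of the form $(1-c\zeta)$.

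\emph{Step 3: combine.} Chaining Steps~1 and~2 yields $\sqrt{J(\trueA,\trueB,\tf\Kh)}\le \sqrt{J_\star}\,/\,(1-c\zeta)$ for an explicit constant $c$, and squaring and rearranging gives a relative-error bound $\tfrac{J(\trueA,\trueB,\tf\Kh)-J_\star}{J_\star}\le \psi(\zeta)$ with $\psi(\zeta)=O(\zeta)$; the hypothesis $\zeta\le1/5$ is exactly what linearizes $\psi$ into the clean bound $5\zeta$. The main obstacle is Step~2: the conceptual move is to recognize that the \emph{true}-optimal responses become a \emph{feasible} point of the estimate-based program after the multiplicative correction $(I-\ttf{\Delta}_\star)^{-1}$, and the delicate technical work is to track the several submultiplicative constants — and optimize the free scalar $\gamma$ — tightly enough that the final constant is exactly $5$ under $\zeta\le1/5$, rather than the looser value one gets from a naive chaining. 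Every other manipulation is the same Neumann-series and submultiplicativity argument already used to derive \eqref{eq:opt_relaxed}.
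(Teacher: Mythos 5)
The paper itself contains no proof of this statement: it is quoted verbatim from Theorem~4.1 of \cite{learning-lqr}, and the surrounding text explicitly defers, saying ``we refer the reader to \cite{learning-lqr} for the proof of this result.'' So your primary move --- invoking the reference --- coincides with what the paper does, and your reconstruction mirrors the structure of the argument in that reference: your Step~1 (the realized cost on $(\trueA,\trueB)$ is bounded by the optimal value of \eqref{eq:robustLQRbnd}, via Theorem~\ref{thm:robust} and a Neumann-series bound) and your Step~2 (bounding that optimal value by exhibiting the feasible candidate $\{\Phix^\star(I-\ttf{\Delta}_\star)^{-1},\,\Phiu^\star(I-\ttf{\Delta}_\star)^{-1}\}$, which by the push-through identity is nothing but the response of $\trueK$ on the estimated system $(\Ah,\Bh)$) are exactly the two ingredients used there. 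One small slip: $\|\ttf{\Delta}_\star\|_{\hinf}\le\zeta$ follows directly from the triangle inequality, where $\zeta := (\epsilon_A+\epsilon_B\norm{\trueK}_{2\to2})\hinfnorm{(zI-(A+B\trueK))^{-1}}$; routing this through Proposition~3.5 as you do costs an unnecessary extra factor of $\sqrt{2}$.

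The genuine gap is Step~3, which is where you located all of the difficulty and then asserted, rather than derived, the conclusion. Running your own chain to the end: with the candidate of Step~2 one may take $\gamma=\sqrt{2}\zeta/(1-\zeta)$, and the value of \eqref{eq:robustLQRbnd} is then at most $\sqrt{J_\star}\big/\bigl(1-(1+\sqrt{2})\zeta\bigr)$ in the worst case, so the chain controls the ratio of the \emph{square roots} of the costs by $\bigl(1-(1+\sqrt{2})\zeta\bigr)^{-1}$. The pairing $(5,1/5)$ in the statement lives exactly at this square-root level, since
\begin{equation*}
\frac{(1+\sqrt{2})\zeta}{1-(1+\sqrt{2})\zeta}\;\le\; 5\zeta
\quad\Longleftrightarrow\quad
\zeta\;\le\;\sqrt{2}-\tfrac{6}{5}\;\approx\;0.214 .
\end{equation*}
But the theorem, as stated and as you restate it, bounds the relative error of the LQR costs themselves (squared $\mathcal{H}_2$ norms). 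Squaring the chain gives a relative-error bound of $\bigl(1-(1+\sqrt{2})\zeta\bigr)^{-2}-1$, which at $\zeta=1/5$ is approximately $2.7$, nowhere near $5\zeta=1$; indeed this squared bound falls below $5\zeta$ only for $\zeta\lesssim 10^{-2}$. So ``tracking the submultiplicative constants tightly enough and optimizing $\gamma$'' cannot, by itself, produce the constant $5$ on the whole interval $\zeta\le 1/5$: the step you wave at is not a matter of tightness but of how the relative error is normalized ($J$ versus $\sqrt{J}$), and closing it requires importing the precise bookkeeping of \cite{learning-lqr} rather than the chain as you have written it.
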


This result offers a guarantee on the performance of the SLS synthesized
controller as a function of the uncertainty in our model parameters -- as far as we are aware, this is the first such result that provides a quantitative measure of performance degradation incurred by a robust controller as a function system uncertainty size.  We refer the reader to \cite{learning-lqr} for the proof this result, but remark here that Theorem \ref{thm:lqr_cost} allows one to transparently connect statistical estimation results (which provide non-asymptotic guarantees on the size of the estimation errors $\epsilon_A$ and $\epsilon_B$) with robust control, allowing for end-to-end sample complexity bounds to be obtained.  In this sense, the robust version of SLS provides a bridge between machine learning and robust control.

\subsubsection{Spatiotemporal approximations}\label{sec:firaprox}
We begin with the simpler setting of considering only an FIR approximation.  In particular, consider the general SLS optimization problem \eqref{eq:SLSsyn_inf}, where for simplicity, we assume that there are no additional constraints $\Ss$, resulting in a convex but infinite-horizon optimal control problem.  For concreteness, we will consider the following mixed $\mathcal{H}_2/\mathcal{H}_\infty$ optimal control problem -- these methods extend to the more general setting in a natural way:
\begin{equation}
\begin{array}{rl}
 \min_{\Phix, \Phiu}&  \bignorm{\begin{bmatrix} Q^{1/2} & \\ & R^{1/2}\end{bmatrix}\begin{bmatrix}\Phix \\ \Phiu\end{bmatrix}}_{\mathcal{H}_2} + \lambda \bignorm{\begin{bmatrix} Q_\infty^{1/2} & \\ & R_\infty^{1/2}\end{bmatrix}\begin{bmatrix}\Phix \\ \Phiu\end{bmatrix}}_{\mathcal{H}_\infty}\\
\st &  \begin{bmatrix} zI-A & - B_2 \end{bmatrix}\begin{bmatrix} \Phix \\ \Phiu \end{bmatrix} = I \\
& \Phix, \, \Phiu \in \frac{1}{z}\RHinf,
\end{array}\label{eq:multiobj}
\end{equation}
where $\lambda>0$ is a user-specified weighting parameter.

Although there is no known exact solution to this problem, we show here that the robustness results of the previous section can be used to formulate a finite-dimensional approximation to \eqref{eq:multiobj} that has provably near optimal performance.  In particular, we consider the following approximate SLS problem:

\begin{equation}
\begin{array}{rl}
 \min_{\gamma\in[0,1)}\frac{1}{1-\gamma}&\left(\displaystyle\min_{\Phix, \Phiu}\bignorm{\begin{bmatrix} Q^{1/2} & \\ & R^{1/2}\end{bmatrix}\begin{bmatrix}\Phix \\ \Phiu\end{bmatrix}}_{\mathcal{H}_2} + \lambda \bignorm{\begin{bmatrix} Q_\infty^{1/2} & \\ & R_\infty^{1/2}\end{bmatrix}\begin{bmatrix}\Phix \\ \Phiu\end{bmatrix}}_{\mathcal{H}_\infty}\right)\\
& \st \  \begin{bmatrix} zI-A & - B_2 & - I\end{bmatrix}\begin{bmatrix} \Phix \\ \Phiu \\ \frac{1}{z^T}V\end{bmatrix} = I \\
& \quad \quad \quad \quad \quad \norm{V}_{2\to 2} \leq \gamma, \ \Phix, \, \Phiu \in \fir.
\end{array}\label{eq:multiobj-approx}
\end{equation}

Before analyzing performance of the controller implemented using the solution to optimization problem \eqref{eq:multiobj-approx}, let us highlight the changes that have been made.  First, we have imposed that the synthesized system responses $\SFpair$ lie in $\fir$, i.e., that they be FIR of horizon $T$.  Although we argued that this was a desirable constraint to impose, as it immediately yields a finite-dimensional optimization problem, in practice, the system $(A,B_2)$ may not be controllable, meaning that such constraints are infeasible for any value of $T$.  Further, it is well known that imposing such FIR constraints, resulting in deadbeat control, can lead to robustness issues in implementation.

To circumvent these issues, we also introduce an additional term $\frac{1}{z^{T-1}}V$ to the achievability constraint: this term can be thought of as the introduction of a centralized virtual actuator that can only act at time $T-1$.   This virtual actuator ensures that the FIR constraint is in fact feasible for any value of $T\geq 1$ (simply set $V = - A\Phi_x[T-1] - B\Phi_u[T-1]$).  However, as this virtual actuator does not in fact exist, we must account for this in our analysis.  To do so, notice that the modified achievability constraint is equivalent to
\[
\begin{bmatrix} zI-A & - B_2\end{bmatrix}\begin{bmatrix} \Phix \\ \Phiu \end{bmatrix} = I + \frac{1}{z^{T-1}}V,
\]
allowing us to leverage Theorem \ref{thm:robust} and Corollaroy \ref{coro:robust}  with $\ttf \Delta = z^{-(T-1)}V$. Following a similar argument to that used above, so long as $\hinfnorm{z^{-(T-1)}V} = \norm{V}_{2\to2}<1$ (which is enforced by the constraints of the optimization problem), we can then upper bound the achieved objective function by their robust counterparts, leading to the modified objective function.

We now show that the performance of the controller synthesized using optimization problem \eqref{eq:multiobj-approx} converges exponentially quickly to that achieved by the optimal solution to the infinite horizon problem \eqref{eq:multiobj} with respect to the approximation horizon $T$.

In particular, let $\Phix^\star$ and $\Phiu^\star$ denote the optimal solutions to problem \eqref{eq:multiobj}, and let $\Kstar = \Phiu^\star(\Phix^\star)^{-1}$ be the resulting optimal controller.  As $\Phix^\star = \sum_{t=1}^\infty z^{-t}\Phi_x[t] \in \frac{1}{z}\RHinf$, there exist constants $C_\star$ and $\rho_\star$ such that $\norm{\Phi_x^\star[t]}_{2\to 2}\leq C_\star \rho_\star^t$ for all $t\geq 0$.  We leverage this observation to construct a feasible solution to \eqref{eq:multiobj-approx} using $\Phix^\star$ and $\Phiu^\star$ to obtain a sub-optimality bound in terms of the horizon $T$.

\begin{theorem}
\label{thm:fir-approx}
If the approximation horizon $T$ is sufficiently large to ensure that $C_\star\rho_\star^T <1$,
then the approximate SLS problem \eqref{eq:multiobj-approx} is feasible, and achieves the relative performance bound:
\begin{equation}
J_T(\Phix,\Phiu,V,\gamma)\leq \frac{1}{1-C_\star\rho_\star^T}J_\star + \frac{\lambda C_\star\rho_\star^T}{(1-C_\star\rho_\star^T)(1-\rho_\star^T)}(\norm{Q_\infty^{1/2}}_{2 \to 2} + \norm{R_\infty^{1/2}}_{2 \to 2}\hinfnorm{\Kstar}).
\end{equation}
Here $J_T(\Phix,\Phiu,V,\gamma)$ is the performance achieved by the controller synthesized using the approximate SLS problem \eqref{eq:multiobj-approx} with horizon $T$, and $J_\star$ is the performance achieved by the optimal solution to \eqref{eq:multiobj}.
\end{theorem}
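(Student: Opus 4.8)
The plan is to prove the bound by exhibiting an explicit feasible point for the approximate problem~\eqref{eq:multiobj-approx}, obtained by truncating the infinite-horizon optimizer $\{\Phix^\star,\Phiu^\star\}$, and then upper bounding the objective at that point. The logic relies on two facts established earlier: by Theorem~\ref{thm:robust} and Corollary~\ref{coro:robust}, the (robust) objective of~\eqref{eq:multiobj-approx} is itself an upper bound on the true closed-loop cost $J_T$ of the synthesized controller; and the optimal value of~\eqref{eq:multiobj-approx} is no larger than its objective evaluated at any feasible point. Hence any feasible candidate gives an upper bound on $J_T$, and it suffices to produce a good one.

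First I would construct the candidate. Define the FIR truncations $\Phix = \sum_{t=1}^{T-1} z^{-t}\Phi_x^\star[t]$ and $\Phiu = \sum_{t=1}^{T-1} z^{-t}\Phi_u^\star[t]$, both lying in $\fir$. Substituting these into the map $\begin{bmatrix} zI-A & -B_2\end{bmatrix}$ and using the spectral-component recursion $\Phi_x^\star[t+1] = A\Phi_x^\star[t] + B_2\Phi_u^\star[t]$ that is forced by the exact achievability constraint in~\eqref{eq:multiobj}, every defect term cancels except a single spectral component occupying the virtual-actuator slot. This residual defines $V = -\left(A\Phi_x^\star[T-1] + B_2\Phi_u^\star[T-1]\right) = -\Phi_x^\star[T]$, exactly as in the text. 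Setting $\gamma := C_\star\rho_\star^T$, the decay estimate $\norm{\Phi_x^\star[T]}_{2\to2}\le C_\star\rho_\star^T$ gives $\norm{V}_{2\to2}\le\gamma$, while the hypothesis $C_\star\rho_\star^T<1$ guarantees $\gamma\in[0,1)$. Together with the FIR membership, this verifies all constraints of~\eqref{eq:multiobj-approx}, proving the feasibility assertion.

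Next I would bound the objective at this point. The outer prefactor $\tfrac{1}{1-\gamma}$ with $\gamma=C_\star\rho_\star^T$ yields the leading factor $\tfrac{1}{1-C_\star\rho_\star^T}$. For the $\htwo$ term, since $\norm{\cdot}_{\htwo}^2$ is the sum of squared Frobenius norms of the spectral components, truncation only deletes nonnegative terms, so the weighted $\htwo$ norm of $\{\Phix,\Phiu\}$ is at most that of $\{\Phix^\star,\Phiu^\star\}$. For the $\hinf$ term, I would apply the triangle inequality to split the weighted truncation into the full weighted response plus the weighted tail $\sum_{t\ge T} z^{-t}\begin{bmatrix}\Phi_x^\star[t]\\ \Phi_u^\star[t]\end{bmatrix}$; the full-response piece recombines with the $\htwo$ piece to reconstitute exactly $J_\star$. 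The tail is then estimated using submultiplicativity to pull out $\norm{Q_\infty^{1/2}}_{2\to2}$ and $\norm{R_\infty^{1/2}}_{2\to2}$, the factorization $\Phiu^\star = \Kstar\Phix^\star$ to introduce $\hinfnorm{\Kstar}$ on the control channel, and a geometric series in $\norm{\Phi_x^\star[t]}_{2\to2}\le C_\star\rho_\star^t$, which sums to $\tfrac{C_\star\rho_\star^T}{1-\rho_\star^T}$. Assembling the three estimates and dividing by $1-\gamma$ gives the stated bound.

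The main obstacle is the $\hinf$ tail estimate. Unlike the $\htwo$ norm, $\hinf$ does not decompose over spectral components, and the control tail $\sum_{t\ge T} z^{-t}\Phi_u^\star[t]$ is \emph{not} literally $\Kstar$ applied to the state tail, because projection onto the high-order terms does not commute with convolution by $\Kstar$. Carrying this out so that $\hinfnorm{\Kstar}$ and $\norm{R_\infty^{1/2}}_{2\to2}$ enter exactly as written, and so that the geometric summation produces the sharp $\tfrac{1}{1-\rho_\star^T}$ factor rather than the looser $\tfrac{1}{1-\rho_\star}$, is the delicate step where the exponential decay of the optimal response and the stability of $\Kstar$ must be combined carefully; everything else is bookkeeping on the truncation and the feasibility certificate.
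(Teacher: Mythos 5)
Your proposal follows essentially the same route as the paper's proof: the same truncated candidate $\{\Phix^\star(1:T-1),\,\Phiu^\star(1:T-1)\}$ with $\tilde V=-\Phi_x^\star[T]$ and $\tilde\gamma=C_\star\rho_\star^T$, the same feasibility verification via the spectral-component recursion, the same appeal to optimality, and the same objective bound combining monotonicity of the $\mathcal{H}_2$ norm under truncation with a triangle-inequality split of the $\mathcal{H}_\infty$ term and a geometric tail estimate. The two delicate points you flag — that projection onto the tail does not commute with convolution by $\Kstar$, and that the plain geometric series yields a $1/(1-\rho_\star)$ factor rather than the stated $1/(1-\rho_\star^T)$ — are real, but they are passed over silently in the paper's own proof as well, so your plan is, if anything, more candid about where the argument requires care.
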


Before presenting the proof of this result, we comment on its implications.  First, we see very clearly that the gap between $J_T$ and $J_\star$ decays exponentially with the horizon $T$, and further, this rate is specified by $\rho_\star$, which governs the spectral norm decay of true system response elements.  Informally, this theorem shows that if the optimal controller leads to good system performance (i.e., to rapid decay of the system response elements), then it is easy to approximate.  

\begin{proof}
Let $(\Phix,\Phiu,V,\gamma)$ denote the optimal solutions to the approximate SLS problem \eqref{eq:multiobj-approx}, and denote by $J_T(\Phix,\Phiu,V,\gamma)$ the value of the objective function over a finite horizon $T$. 

We now construct a feasible solution to to the problem using the optimal system responses $\Phix^\star$ and $\Phiu^\star$.  In particular, we claim that
\begin{align*}
\Phixtil &:= \Phix^\star(1:T-1) \\
 \Phiutil &:= \Phiu^\star(1:T-1) \\
\tilde V & :=  -A\Phi^\star_x[T-1] - B\Phi^\star_u[T-1] = -\Phi^\star_x[T] \\
\tilde \gamma & := C_\star \rho_\star^T,
\end{align*}
is a feasible solution.  First, notice that for $t=0,...,T-2$, the achievability constraint~\eqref{eq:affine_cons} is equivalent to
\[
\Phi_x[t+1] = A\Phi_x[t] + B_2 \Phi_u[t].
\]
Since $\{\Phixtil,\Phiutil\}$ are simply truncations of $\{\Phix^\star,\Phiu^\star\}$ to a horizon of $T-1$, they satisfy these constraints by construction.  Finally, for $t=T-1$, we have that
\[
\Phi_x[T] = A\Phi^\star_x[T-1] + B\Phi^\star_u[T-1] + \tilde V = 0,
\]
where the final equality follows from our choice of $\tilde V$ to effectively cancel out the tail of optimal response.  Finally, note that $\norm{\tilde V}_{2\to 2} = \norm{\Phi_x^\star[T]}_{2\to 2} \leq C_\star\rho_\star^{T} < 1$, where the final inequality follows from our assumption on $T$.

Therefore, by optimality, it follows that
\begin{align*}
J_T(\Phix,\Phiu,V,\gamma) &\leq J_T(\Phixtil,\Phiutil,\tilde V,\tilde\gamma)\\
& = \frac{1}{1-C_\star\rho_\star^T}\left(\bignorm{\begin{bmatrix} Q^{1/2} & \\ & R^{1/2}\end{bmatrix}\begin{bmatrix}\Phixtil \\ \Phiutil \end{bmatrix}}_{\mathcal{H}_2} + \lambda \bignorm{\begin{bmatrix} Q_\infty^{1/2} & \\ & R_\infty^{1/2}\end{bmatrix}\begin{bmatrix}\Phixtil \\ \Phiutil \end{bmatrix}}_{\mathcal{H}_\infty}\right).
\end{align*}
Our final step is to relate the norms achieved by the FIR approximations $\Phix^\star(0:T-1)$ and $\Phiu^\star(0:T-1)$ to those achieved by $\Phix^\star$ and $\Phiu^\star$.  We begin by noting that for any stable transfer matrix $\tf M \in z^{-1}\RHinf$, we have that $\htwonorm{\tf M(1:T)} \leq \htwonorm{\tf M}$, for any $T\geq 0$.  The $\mathcal{H}_\infty$ case is slightly more cumbersome, as such an inequality does not hold exactly, but notice that we can write
\begin{align*}
\bignorm{\begin{bmatrix} Q_\infty^{1/2} & \\ & R_\infty^{1/2}\end{bmatrix}\begin{bmatrix}\Phix^\star(1:T-1) \\ \Phiu^\star(1:T-1)\end{bmatrix}}_{\mathcal{H}_\infty} & \leq \bignorm{\begin{bmatrix} Q_\infty^{1/2} & \\ & R_\infty^{1/2}\end{bmatrix}\begin{bmatrix}\Phix^\star \\ \Phiu^\star\end{bmatrix} }_{\mathcal{H}_\infty} + \bignorm{\begin{bmatrix} Q_\infty^{1/2} & \\ & R_\infty^{1/2}\end{bmatrix}\begin{bmatrix}\Phix^\star(T:\infty) \\ \Phiu^\star(T:\infty),\end{bmatrix} }_{\mathcal{H}_\infty}
\end{align*}
where the inequality follows from the triangle inequality.  Finally, notice that last term can further be be bounded as
\begin{align*}
\bignorm{\begin{bmatrix} Q_\infty^{1/2} & \\ & R_\infty^{1/2}\end{bmatrix}\begin{bmatrix}\Phix^\star(T:\infty) \\ \Phiu^\star(T:\infty),\end{bmatrix} }_{\mathcal{H}_\infty} & \leq (\norm{Q_\infty^{1/2}}_{2\to 2} + \norm{R_\infty^{1/2}}_{2\to 2}\hinfnorm{\Kstar})\hinfnorm{\Phix^\star(T:\infty)} \\
&\leq (\norm{Q_\infty^{1/2}}_{2\to 2} + \norm{R_\infty^{1/2}}_{2\to 2}\hinfnorm{\Kstar})\sum_{t=T}^\infty \norm{\Phi_x^\star(t)}_{2\to 2} \\
&\leq (\norm{Q_\infty^{1/2}}_{2\to 2} + \norm{R_\infty^{1/2}}_{2\to 2}\hinfnorm{\Kstar})C_\star\sum_{t=T}^\infty \rho_\star^t \\
& = (\norm{Q_\infty^{1/2}}_{2\to 2} + \norm{R_\infty^{1/2}}_{2\to 2}\hinfnorm{\Kstar})\frac{C_\star\rho_\star^T}{1-\rho_\star^T}.
\end{align*}
Combining this bound with the previous inequalities then yield the result.
\end{proof}

We conclude this section by remarking that the above proof contains a general recipe for generating such suboptimality bounds.  Begin with the desired infinite horizon problem, and then write out its FIR approximation by augmenting the system with virtual actuation, and then exploiting the robustness results in Theorem \ref{thm:robust} and Corollary \ref{coro:robust}.  Then, construct a feasible solution to the approximate problem using the optimal solution to the infinite horizon problem and follow the general approach of the proof above to obtain a suboptimality bound by exploiting the exponential decay of stable LTI systems.  Finally, to extend these results from temporal (FIR) approximations to spatiotemporal (localized) approximations, it suffices to impose a localized SLC on the system responses, and further to assume that the optimal solutions also suitably decay in space according to some function $f(d)$-- the resulting sub-optimality bounds will be similar, but now decaying exponentially as a function of the horizon $T$, and according to $f(d)$ with respect to the localized region diameter $d$.


{
\subsubsection{Examples}
Now that we have established a theoretical robustness result we shall use this subsection to illustrate the effect of Theorem~\ref{thm:robust} when $\|\ttf \Delta\|\neq 0$. Consider the symmetric bi-directional chain topology, where each node in the chain is a scalar LTI system evolving according to the dynamics
\begin{equation*}
x_i[t+1] = \alpha(x_i[t] + \kappa x_{i-1}[t]+\kappa x_{i+1}[t])+b_iu_i[t]+w_i[t],
\end{equation*}
where $\alpha$ is chosen to make the open-loop system unstable with a spectral radius of $1.1$. The coupling strength $\kappa$ is set to 1.  The value $b_i$ is given by 1 if there is an actuator at subsystem $i$, and $0$ otherwise. We place $40$ actuators in the  network at locations specified by $i = 5j - 4$ and $5j$ for $j = 1,\hdots ,20$. The objective function is the quadratic term $\|\tf x\|_2^2 + \gamma \|\tf u\|_2^2$. 

We begin by showing the system response for a system with actuation at every other node. By choosing $(d,T)$-locality constraints with $d=4$, $T=15$ and restricting the communication speed to be equal to the speed at which the plant dynamics propagate, the system response cannot be perfectly localized. However, by employing Theorem~\ref{thm:robust}, the robust SLS problem finds a solution such that $\|\ttf \Delta\|<1$. The state and control action from the resulting controller are shown in Figure~\ref{fig:eg3}.

\begin{figure*}[h!]
      \centering
      \includegraphics[width=0.6\textwidth]{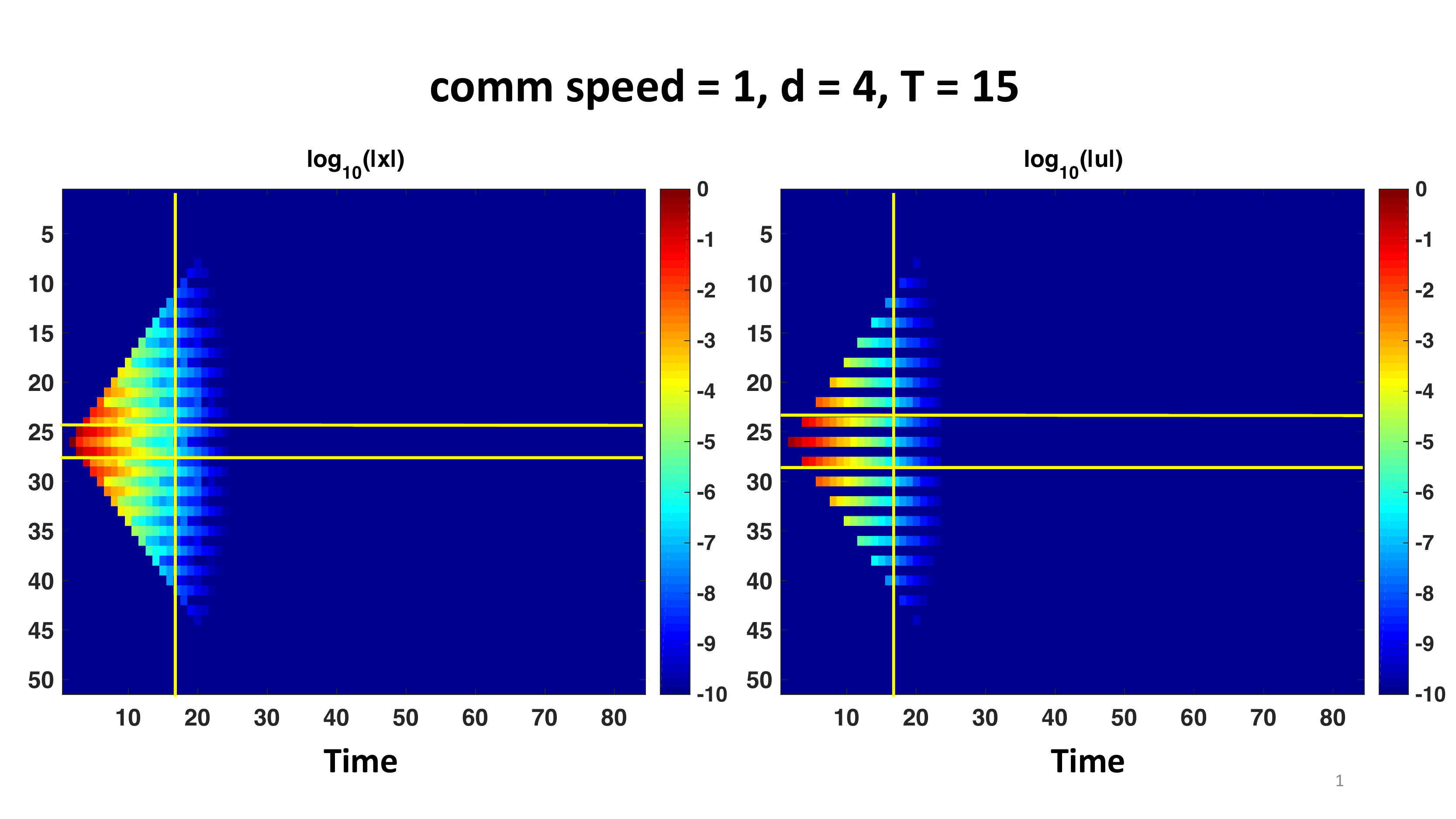}
      \caption{With the dynamics propagating at the same speed as the controllers communicate and with such sparse actuation, we are now unable to localize in space and time as desired. However, the control action and state deviation is reasonably well contained.}
      \label{fig:eg3}
\end{figure*}

In subsequent experiments shown in Figures~\ref{fig:eg5}--\ref{fig:eg7} the system responses are shown for systems where we trade off the actuator density (Fig~\ref{fig:eg5}), $d$-hop locality (Figure~\ref{fig:eg6}), and FIR horizon length (Figure~\ref{fig:eg7}). In each case we show a parameter value that results in a localizable system response (shown on the left) and the virtually localizable response (shown on the right).

\begin{figure*}[h!]
      \centering
      \includegraphics[width=0.9\textwidth]{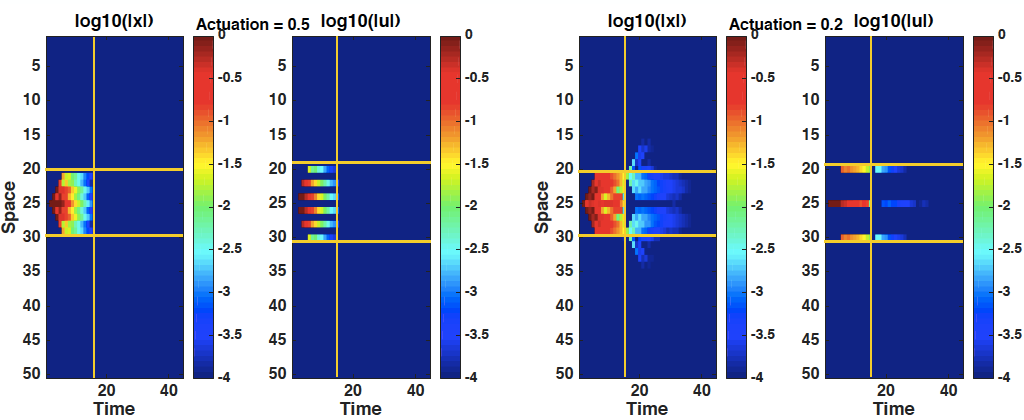}
      \caption{As the number of actuators available for control decreases it becomes more difficult to obtain a localized response. The $\mathcal{H}_2$-norm for the actuation = 0.5 is $79.5$, and for actuation = $0.2$ it is $97.52$ with, $\|\Delta\|_{\mathcal{E}_1}=0.25$. }
      \label{fig:eg5}
\end{figure*}

\begin{figure*}[h!]
      \centering
      \includegraphics[width=0.9\textwidth]{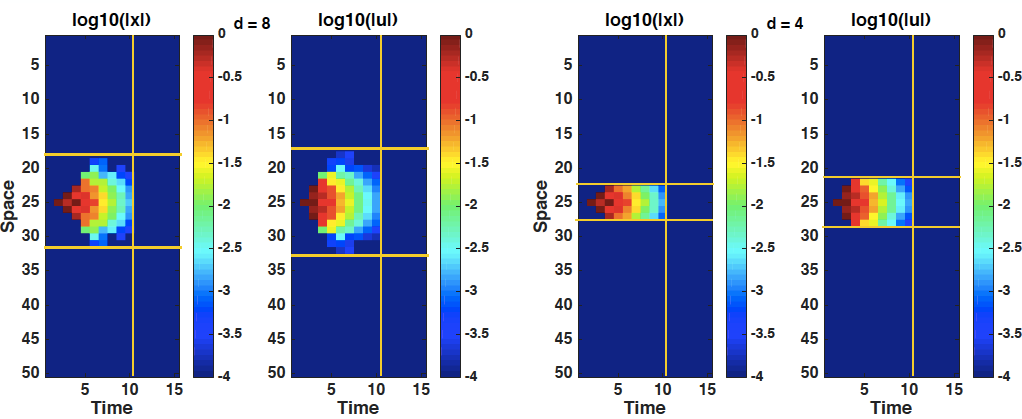}
      \caption{The $d$-hop sparsity is decreased from $d=8$ to $d=4$. The closed loop $\mathcal{H}_2$-norm is $72.57$ and $72.59$ for $d= 8$ and $d=4$ respectively. No virtual localization was required.}
      \label{fig:eg6}
\end{figure*}

\begin{figure*}[h!]
      \centering
      \includegraphics[width=0.9\textwidth]{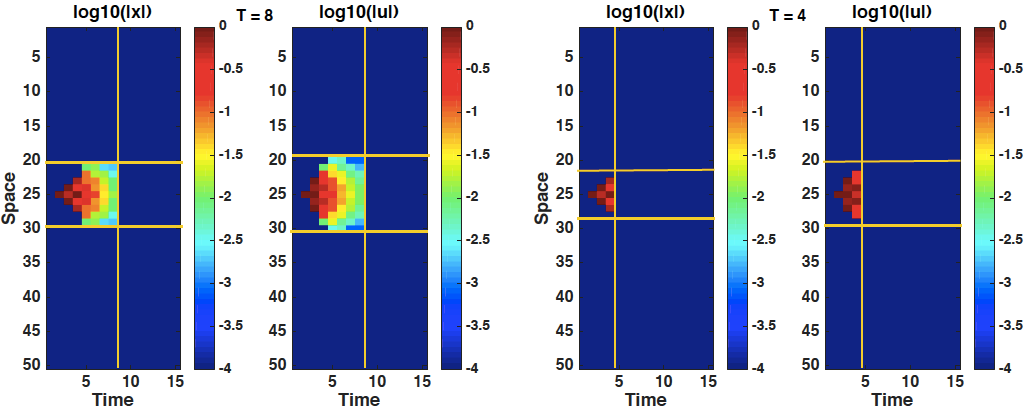}
      \caption{The FIR Horizon length is decreased from $T=8$ to $T=4$. In neither case was virtual actuation required. The $\mathcal{H}_2$-norm was 72.57 for $T=8$ and 72.96 for $T=4$.}
      \label{fig:eg7}
\end{figure*}

}



\subsection{Summary}
In this section, we extended the state-feedback finite-time horizon results from Section \ref{sec:sys_resp_intro} to the infinite horizon settings.  We further showed that the SLS framework allows us to impose convex locality constraints on distributed controllers, greatly improving the scalability of synthesis methods via a decomposition and dimensionality reduction based method.  We also showed how robust counterparts to SLS problems can be used to transparently accommodate modeling errors, as well as how they can be used to derive finite dimensional approximations to infinite horizon problems with provable performance guarantees. In addition to the results presented here, there has also been interesting work looking at linking the SLS state-feedback formulation to classical notions such as passivity~\cite{Ran18} and infinite-dimensional system theory and the spatial invariance framework~\cite{JenB18}. In~\cite{AndMC19} the $\ttf \Delta$-block is  used to model discretization errors when a sparse approximate model is constructed from a structured continuous-time dynamics.

\section{Output Feedback}
\label{sec:output}
This section extends many of the ideas presented in the previous section to the output feedback setting.  As we will see, many of the core concepts developed in the state-feedback setting (working directly with system responses, affine achievability constraints) extend in a natural way to the output feedback setting.  However, these extensions often come at the expense of more complex formulae and computational procedures.

\subsection{Output Feedback with $D_{22} = 0$} \label{sec:of}

We begin by extending Theorem \ref{thm:param} to the output feedback setting.  Without loss of generality (see later in the section), we consider a strictly proper plant described by
\begin{equation}
\tf P = \left[ \begin{array}{c|cc} A & B_1 & B_2 \\ \hline C_1 & D_{11} & D_{12} \\ C_2 & D_{21} & 0 \end{array} \right]. \label{eq:ofplant}
\end{equation}

Letting $\delta_x [t] = B_1 w[t]$ denote the disturbance on the state, and $\delta_y [t] = D_{21} w[t]$ denote the disturbance on the measurement, the dynamics defined by plant \eqref{eq:ofplant} can be written as
\begin{eqnarray}
x[t+1] &=& A x[t] + B_2 u[t] + \delta_x [t] \nonumber \\
y[t] &=& C_2 x[t] + \delta_y [t]. \label{eq:sys_out}
\end{eqnarray}

Analogous to the state-feedback case, we define a system response $\{\Phixx, \Phiux, \Phixy, \Phiuy\}$ from perturbations $(\ttf{\delta_x}, \ttf{\delta_y})$ to state and control inputs $(\tf x,\tf u)$ via the following relation:
\begin{equation}
\begin{bmatrix} \tf x \\ \tf u \end{bmatrix} = \begin{bmatrix} \Phixx & \Phixy \\ \Phiux & \Phiuy \end{bmatrix} \begin{bmatrix} \ttf{\delta_x} \\ \ttf{\delta_y} \end{bmatrix}. \label{eq:cltm_df}
\end{equation}

Substituting the output feedback control law $\tf u = \tf K \tf y$ into the z-transform of system equation \eqref{eq:sys_out}, we obtain
\begin{equation}
(zI - A - B_2 \tf K C_2) \tf x = \ttf{\delta_x} + B_2 \tf K \ttf{\delta_y}. \nonumber
\end{equation}
For a proper controller $\tf K$, the transfer matrix $(zI-A-B_2 \tf K C_2)$ is always invertible, hence we obtain the following \rev{equivalent} expressions for the system response \eqref{eq:cltm_df} in terms of an output feedback controller $\tf K$:
\begin{align}
\Phixx &= (zI - A - B_2 \tf K C_2)^{-1} \nonumber\\
\Phiux &= \tf K C_2 \Phixx \nonumber\\
\Phixy &= \Phixx B_2 \tf K \nonumber\\
\Phiuy  &= \tf K + \tf K C_2 \Phixx B_2 \tf K. \label{eq:K_relation}
\end{align}

We now present one of the main results of this section: an algebraic characterization of the set $\{\Phixx, \Phiux, \Phixy, \Phiuy\}$ of output-feedback system responses that are achievable by an internally stabilizing controller $\tf K$.
\begin{figure}[ht!]
      \centering
      \includegraphics[width=0.48\textwidth]{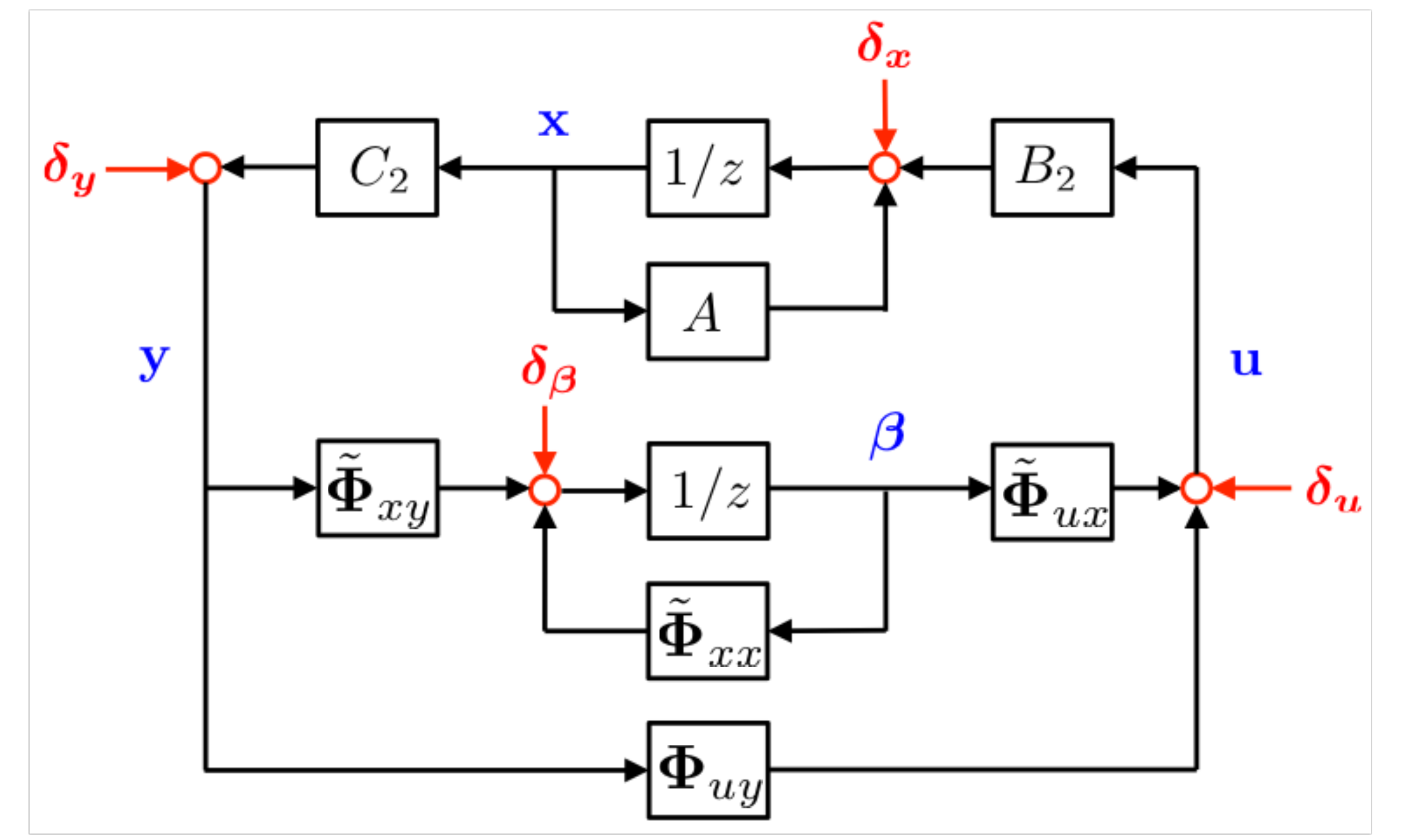}
      \caption{The proposed output feedback controller structure, with $\tf{\tilde\Phi}_{xx} =  z(I - z \Phixx)$, $\tf{\tilde\Phi}_{ux} = z \Phiux$, and $\tf{\tilde\Phi}_{xy} = -z \Phixy$.}
      \label{fig:of}
\end{figure}
\begin{theorem}
For the output feedback system \eqref{eq:ofplant}, the following are true:
\begin{enumerate}[(a)]
  \item The affine subspace described by:
  \begin{subequations} \label{eq:output_fb}
\begin{align}
\begin{bmatrix} zI - A & -B_2 \end{bmatrix}
\begin{bmatrix} \Phixx & \Phixy \\ \Phiux & \Phiuy \end{bmatrix} &= 
\begin{bmatrix} I & 0 \end{bmatrix} \label{eq:output_fb1}\\
\begin{bmatrix} \Phixx & \Phixy \\ \Phiux & \Phiuy \end{bmatrix}
 \begin{bmatrix} zI - A \\ -C_2 \end{bmatrix} &= 
\begin{bmatrix} I \\ 0 \end{bmatrix} \label{eq:output_fb2} \\
\Phixx, \Phiux, \Phixy \in \frac{1}{z} \mathcal{RH}_\infty, \quad & \Phiuy \in \mathcal{RH}_\infty \label{eq:output_fb3}
\end{align}
\end{subequations} 
 parameterizes all system responses \eqref{eq:K_relation} achievable by an internally stabilizing controller $\tf K$. \label{thm2-4}
  \item For any transfer matrices $\{\Phixx, \Phiux, \Phixy, \Phiuy\}$ satisfying \eqref{eq:output_fb}, the controller $\tf K = \Phiuy - \Phiux \Phixx^{-1} \Phixy$ achieves the desired response \eqref{eq:K_relation}.\rev{\footnote{\rev{Note that for any transfer matrices $\{\Phixx, \Phiux, \Phixy, \Phiuy\}$ satisfying \eqref{eq:output_fb}, the transfer matrix $\Phixx$ is always invertible because its leading spectral component $\frac{1}{z} I$ is invertible.  The same holds true for the transfer matrices defined in equation \eqref{eq:K_relation}.}}} \revsecond{Further, if the controller is implemented as in Fig. \ref{fig:of}, then it is internally stabilizing.} \label{thm2-3} 
  \end{enumerate} \label{thm:of}
\end{theorem}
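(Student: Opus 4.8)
The plan is to prove the two parts in turn, mirroring the structure of the state-feedback argument in Theorem~\ref{thm:param} but now tracking four response blocks and a \emph{two-sided} achievability constraint. For the necessity direction of part~(\ref{thm2-4}), I would substitute directly: starting from the closed-loop expressions~\eqref{eq:K_relation} with $\Phixx = (zI - A - B_2 \tf K C_2)^{-1}$, I verify~\eqref{eq:output_fb1} column by column. The first column $(zI-A)\Phixx - B_2 \Phiux$ telescopes to $(zI - A - B_2 \tf K C_2)\Phixx = I$, and the second column $(zI-A)\Phixy - B_2\Phiuy$ collapses to $B_2 \tf K - B_2 \tf K = 0$ after factoring $\Phixx B_2 \tf K$ through the same cancellation; the right constraint~\eqref{eq:output_fb2} follows by the symmetric computation multiplying on the right. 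The membership conditions~\eqref{eq:output_fb3} then split into two observations: strict properness of $\Phixx, \Phiux, \Phixy$ is immediate from the $(zI-\cdot)^{-1}$ factor (each tends to zero as $z\to\infty$ for proper $\tf K$), while $\Phiuy$ inherits only properness from the feedthrough of $\tf K$; stability of all four blocks in $\mathcal{RH}_\infty$ is exactly the statement that $\tf K$ internally stabilizes the plant, since these four maps are precisely the closed-loop responses from $(\ttf{\delta_x},\ttf{\delta_y})$ to $(\tf x,\tf u)$.

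For the converse (sufficiency) direction and the achievability half of part~(\ref{thm2-3}), I would take any $\{\Phixx, \Phiux, \Phixy, \Phiuy\}$ satisfying~\eqref{eq:output_fb}, note that $\Phixx$ is invertible because its leading spectral component $\tfrac{1}{z} I$ is, and set $\tf K := \Phiuy - \Phiux \Phixx^{-1}\Phixy$. The constraints unpack into four identities: $(zI-A)\Phixx - B_2\Phiux = I$ and $(zI-A)\Phixy = B_2\Phiuy$ from~\eqref{eq:output_fb1}, and $\Phixx(zI-A) - \Phixy C_2 = I$ and $\Phiux(zI-A) = \Phiuy C_2$ from~\eqref{eq:output_fb2}. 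The key computation is $(zI - A - B_2\tf K C_2)\Phixx$: using the first identity to rewrite $(zI-A)\Phixx$ and then substituting $\tf K$, the terms $\Phixy C_2$ and $\Phiuy C_2$ are eliminated via the two right-hand identities ($\Phixy C_2 = \Phixx(zI-A)-I$ and $\Phiuy C_2 = \Phiux(zI-A)$), and the $\Phixx^{-1}$ factors cancel exactly, leaving $\Phiux = \tf K C_2 \Phixx$. This simultaneously yields $\Phixx = (zI - A - B_2 \tf K C_2)^{-1}$. The two remaining relations, for $\Phixy$ and $\Phiuy$, follow from the analogous manipulation applied to the block-column pair involving the second column of $\mathbf{\Phi}$, together with the definition of $\tf K$.

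Finally, for the internal-stability half of part~(\ref{thm2-3}), I would use the same bounded-input bounded-output methodology as in Theorem~\ref{thm:param}: implement $\tf K$ through the interconnection of Fig.~\ref{fig:of}, with $\tf{\tilde\Phi}_{xx} = z(I - z\Phixx)$, $\tf{\tilde\Phi}_{ux} = z\Phiux$, and $\tf{\tilde\Phi}_{xy} = -z\Phixy$, inject test perturbations at each internal node, and compute the full table of closed-loop maps to the internal signals. Each such map can be written purely in terms of the four response blocks and the plant data $(A, B_2, C_2)$ viewed as (FIR-stable) elements of $\mathcal{F}_0$; since~\eqref{eq:output_fb3} places all four blocks in $\mathcal{RH}_\infty$, every entry of the table is stable, so the implementation is internally stabilizing. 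I expect the main obstacle to be the bookkeeping in the sufficiency direction: unlike the single-block state-feedback case, one must juggle four blocks, both a left and a right affine constraint, and the inverse $\Phixx^{-1}$ at once, and closing out the $\Phixy$/$\Phiuy$ relations cleanly requires ordering the substitutions so that the $\Phixx^{-1}$ factors cancel precisely rather than leaving residual terms.
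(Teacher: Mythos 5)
Your proposal is correct, and its internal-stability half coincides with the paper's proof: the paper likewise implements $\tf K$ as in Fig.~\ref{fig:of} with $\tf{\tilde\Phi}_{xx} = z(I - z\Phixx)$, $\tf{\tilde\Phi}_{ux} = z\Phiux$, $\tf{\tilde\Phi}_{xy} = -z\Phixy$, injects the four perturbations $(\ttf{\delta_x}, \ttf{\delta_y}, \ttf{\delta_u}, \ttf{\delta_\beta})$, and tabulates all sixteen closed-loop maps (Table~\ref{Table:1}), each stable by \eqref{eq:output_fb3}. I also checked your key cancellations and they close exactly as claimed: $\Phiuy C_2 = \Phiux(zI-A)$ and $\Phixy C_2 = \Phixx(zI-A) - I$ give $\tf K C_2 \Phixx = \Phiux$, hence $(zI - A - B_2 \tf K C_2)\Phixx = I$ by \eqref{eq:output_fb1}, and symmetrically $\Phixx B_2 \tf K = \Phixy$ and $\Phiuy = \tf K + \tf K C_2 \Phixx B_2 \tf K$. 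Where you genuinely differ is in the other two pieces. For necessity, the paper does no transfer-function algebra: it forms the closed-loop state-space realization of the plant--controller interconnection (with controller state $\ttf{\xi}$), invokes internal stability to conclude the closed-loop state matrix is stable (Lemma 5.2 of \cite{Zhou1996robust}), and reads off the joint realization \eqref{eq:ss_relation} of all four blocks, from which \eqref{eq:output_fb3} is immediate and \eqref{eq:output_fb1}--\eqref{eq:output_fb2} follow by routine calculation. Your column-by-column verification on \eqref{eq:K_relation} reaches the same constraints, but note that your claim that stability of the four blocks ``is exactly'' internal stabilization overstates what you need and is the one spot requiring care: only the implication (internally stabilizing $\Rightarrow$ the four closed-loop maps are stable) is used, and the paper's state-space realization is the clean certificate for it. For achievability in part (b), the paper never verifies \eqref{eq:K_relation} by direct algebra; it reads the achieved response off the first two columns of Table~\ref{Table:1}, so achievability and internal stability fall out of a single computation. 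Your route buys a self-contained algebraic argument that mirrors the state-feedback proof of Theorem~\ref{thm:param} and makes explicit which of the four affine identities drives each cancellation; the paper's route buys economy (one table yields both conclusions) and an airtight stability argument on the necessity side.
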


As in the state-feedback case, although not explicitly enforced, one cannot circumvent the need for a stabilizable and detectable system, as formalized in the following lemma.

\begin{lemma}\label{lem:stab_det}
The triple $(A, B_2, C_2)$ is stabilizable and detectable if and only if the affine subspace described by \eqref{eq:output_fb} is non-empty.
\end{lemma}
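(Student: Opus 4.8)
The plan is to mirror the proof of Lemma~\ref{lem:stabilizable}, establishing the two implications separately, with the key observation that the two affine identities~\eqref{eq:output_fb1} and~\eqref{eq:output_fb2} encode the stabilizability and detectability conditions respectively, each via a PBH argument. The notation is deliberately arranged so that the state-feedback argument of Lemma~\ref{lem:stabilizable} can be reused twice -- once ``on the right'' for $(A,B_2)$ and once ``on the left'' for $(A,C_2)$.

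For the ``only if'' direction (non-emptiness $\Rightarrow$ stabilizable and detectable), I would take any $\{\Phixx, \Phiux, \Phixy, \Phiuy\}$ satisfying~\eqref{eq:output_fb} and read off two one-sided inverse relations. The first block column of~\eqref{eq:output_fb1} gives $\begin{bmatrix} zI - A & -B_2 \end{bmatrix}\begin{bmatrix} \Phixx \\ \Phiux \end{bmatrix} = I$, so $\begin{bmatrix} zI - A & -B_2 \end{bmatrix}$ is right-invertible wherever $\Phixx,\Phiux$ have no poles. Since $\Phixx, \Phiux \in \frac{1}{z}\RHinf$ are stable, they are pole-free for all $|z|\ge 1$, which forces $\begin{bmatrix} zI - A & -B_2 \end{bmatrix}$ to have full row rank on $|z|\ge 1$ -- exactly the PBH test for stabilizability of $(A,B_2)$ \cite{ZDGBook}. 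Dually, the first block row of~\eqref{eq:output_fb2} gives $\begin{bmatrix} \Phixx & \Phixy \end{bmatrix}\begin{bmatrix} zI - A \\ -C_2 \end{bmatrix} = I$, and with $\Phixx,\Phixy \in \frac{1}{z}\RHinf$ pole-free on $|z|\ge 1$ the matrix $\begin{bmatrix} zI - A \\ -C_2 \end{bmatrix}$ has full column rank on $|z|\ge 1$, which is the PBH test for detectability of $(A,C_2)$. This half is essentially Lemma~\ref{lem:stabilizable} invoked twice and should be routine.

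For the ``if'' direction (stabilizable and detectable $\Rightarrow$ non-emptiness) I would exhibit an explicit feasible point. By stabilizability pick $F$ with $A+B_2F$ Schur-stable, and by detectability pick $L$ with $A+LC_2$ Schur-stable, and form the standard observer-based controller that uses $F$ for feedback and $L$ for estimation. Computing the closed-loop $A$-matrix in the coordinates $(x,\, x-\hat x)$ shows it is block-triangular with diagonal blocks $A+B_2F$ and $A+LC_2$ (up to the sign convention on $L$), hence Schur-stable, so this $\tf K$ is internally stabilizing. By the forward implication of Theorem~\ref{thm:of}(a), the system response~\eqref{eq:K_relation} of any internally stabilizing controller satisfies~\eqref{eq:output_fb}; applying this to the observer-based controller produces a concrete element of the affine subspace, giving non-emptiness.

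The main obstacle is this ``if'' direction: unlike the state-feedback case, where $\tf u = F\tf x$ immediately supplies $\Phix=(zI-A-B_2F)^{-1}$ and $\Phiu=F(zI-A-B_2F)^{-1}$, no single static gain closes an output-feedback loop, so I must either invoke Theorem~\ref{thm:of}(a) as above or else grind out closed-form expressions for all four responses and verify directly that $\Phixx,\Phiux,\Phixy \in \frac{1}{z}\RHinf$, $\Phiuy \in \RHinf$, and both affine identities hold. I would prefer the former route to keep the argument short; the only care needed there is to confirm the response genuinely lands in $\frac{1}{z}\RHinf$ (resp.\ $\RHinf$ for $\Phiuy$), which follows from the strict properness of $\Phixx=(zI-A-B_2\tf K C_2)^{-1}$ together with properness of the observer-based $\tf K$.
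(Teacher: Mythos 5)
Your proof is correct, but your ``if'' direction takes a genuinely different route from the paper's. The paper never constructs a controller: it first records (as Corollary~\ref{cor:1}) the dual of Lemma~\ref{lem:stabilizable}, namely that detectability of $(A,C_2)$ is equivalent to feasibility of the state-estimation constraint \eqref{eq:state_est}, and then combines an arbitrary solution $(\Phixx^1,\Phiux^1)$ of the state-feedback constraint \eqref{eq:affine_cons} with an arbitrary solution $(\Phixx^2,\Phixy^2)$ of \eqref{eq:state_est} through the explicit algebraic formulas \eqref{eq:sfest}, which are then verified to lie in the affine space \eqref{eq:output_fb}. You instead exhibit a specific internally stabilizing observer-based controller and invoke the forward direction of Theorem~\ref{thm:of}(a) to conclude its response is a feasible point. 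Your route is shorter and outsources all the algebra to the theorem, but it makes the lemma depend on Theorem~\ref{thm:of}; this is not circular here -- the paper's proof of Theorem~\ref{thm:of} nowhere uses Lemma~\ref{lem:stab_det} -- but that dependency check is essential and you should state it explicitly, since the paper deliberately placed the lemma \emph{before} the theorem's proof and kept it self-contained precisely so it could serve as a standing feasibility fact. The paper's construction also buys something yours does not: it shows feasible points can be assembled from \emph{any} pair of solutions to the two one-sided subproblems (a transfer-matrix-level separation principle, of which your observer-based controller is the special case obtained from static gains $F$ and $L$), which is in keeping with the SLS philosophy of working with system responses rather than controllers. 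Your ``only if'' direction -- reading the first block column of \eqref{eq:output_fb1} and the first block row of \eqref{eq:output_fb2} as the state-feedback and state-estimation constraints and applying the PBH arguments of Lemma~\ref{lem:stabilizable} and its dual -- is exactly the argument the paper leaves implicit.
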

\begin{proof}
We begin by noting the analysis for the state feedback problem in Section \ref{sec:state_feedback_SLS} can be applied to the state estimation problem by considering the dual to a full control system (c.f.,  \S 16.5 in \cite{Zhou1996robust}). For instance, the following corollary to Lemma \ref{lem:stabilizable} gives an alternative definition of the detectability of pair $(A,C_2)$ \cite{2015_Wang_LDKF}.
\begin{coro}
The pair $(A, C_2)$ is detectable if and only if the following conditions are feasible:
\begin{subequations} \label{eq:state_est}
\begin{align}
& \begin{bmatrix} \Phixx & \Phixy \end{bmatrix} \begin{bmatrix} zI - A \\ -C_2 \end{bmatrix} = I \label{eq:state_est1}\\
& \Phixx, \Phixy \in \frac{1}{z} \mathcal{RH}_\infty. \label{eq:state_est2}
\end{align}
\end{subequations} \label{cor:1}.
\end{coro}

Now assume the triple $(A,B_2,C_2)$ is stabilizable and detectable.  As argued in Lemma \ref{lem:stabilizable} and Corollary \ref{cor:1}, this implies that there exist stable transfer matrices $(\Phixx^1,\Phiux^1)$ satisfying \eqref{eq:affine_cons}, and stable transfer matrices $(\Phixx^2,\Phixy^2)$ satisfying \eqref{eq:state_est}.  Using these transfer matrices, we construct the  stable transfer matrices, which can be verified to lie in the affine space \eqref{eq:output_fb} 
\begin{subequations}
\begin{align}
\Phixx &= \Phixx^1 + \Phixx^2 - \Phixx^1 (zI-A) \Phixx^2 \label{eq:sfest1}\\
\Phiux &= \Phiux^1 - \Phiux^1 (zI-A) \Phixx^2 \label{eq:sfest2}\\
\Phixy &= \Phixy^2 - \Phixx^1 (zI-A) \Phixy^2 \label{eq:sfest3}\\
\Phiuy &= -\Phiux^1 (zI-A) \Phixy^2. \label{eq:sfest4}
\end{align}\label{eq:sfest}
\end{subequations}
\end{proof}

This lemma provides a sanity check, and in particular tells us that the set of achievable stable system responses for the triple $(A,B_2,C_2)$ is non-empty if and only if the system is stabilizable and detectable.

\begin{proof}[Proof of Theorem \ref{thm:of}]

Proof of 1. 
Consider the output feedback system \eqref{eq:ofplant}. Let $\{ \Phixx, \Phiux, \Phixy, \Phiuy\}$, with $\tf x = \Phixx \ttf{\delta_x} + \Phixy \ttf{\delta_y}$ and  $\tf u = \Phiux \ttf{\delta_x} + \Phiuy \ttf{\delta_y}$, be the system response achieved by an internally stabilizing control law $\tf u = \tf K \tf y$. Then $\{\Phixx, \Phiux, \Phixy, \Phiuy\}$ lies in the affine subspace described by \eqref{eq:output_fb}.

To see this, let the internally stabilizing controller $\tf K$ have state space realization \eqref{eq:Kss}.
Combining \eqref{eq:Kss} with the system equation \eqref{eq:sys_out}, we obtain the closed loop dynamics
\begin{equation}
\begin{bmatrix} z \tf x \\ z \ttf{\xi} \end{bmatrix} = \begin{bmatrix} A + B_2 D_k C_2 & B_2 C_k \\ B_k C_2 & A_k \end{bmatrix} \begin{bmatrix} \tf x \\ \ttf{\xi} \end{bmatrix} + \begin{bmatrix} I & B_2 D_k \\ 0 & B_k \end{bmatrix} \begin{bmatrix} \ttf{\delta_x} \\ \ttf{\delta_y} \end{bmatrix}. \nonumber
\end{equation}
From the assumption that $\tf K$ is internally stabilizing, we know that the state matrix of the above equation is a stable matrix (Lemma $5.2$ in \cite{Zhou1996robust}). The system response achieved by $\tf u = \tf K \tf y$ is given by 
\begin{equation}
\begin{bmatrix} \Phixx & \Phixy \\ \Phiux & \Phiuy \end{bmatrix} = \left[ \begin{array}{cc | cc} A + B_2 D_k C_2 & B_2 C_k & I & B_2 D_k \\ B_k C_2 & A_k & 0 & B_k \\ \hline I & 0 & 0 & 0 \\ D_k C_2 & C_k & 0 & D_k \end{array} \right], \label{eq:ss_relation}
\end{equation}
which satisfies \eqref{eq:output_fb3}.
In addition, it can be shown by routine calculation that \eqref{eq:ss_relation} satisfies both \eqref{eq:output_fb1} and \eqref{eq:output_fb2}.

Proof of 2.
Here we show that for any system response $\{\Phixx, \Phiux, \Phixy, \Phiuy\}$ lying in the affine subspace \eqref{eq:output_fb}, we can construct an internally stabilizing controller $\tf K$ that leads to the desired system response \eqref{eq:K_relation}. From the relations in \eqref{eq:K_relation}, we notice the identity $\tf K = \Phiuy - \tf K C_2 \Phixx B_2 \tf K = \Phiuy - \Phiux \Phixx^{-1} \Phixy$. This relation leads to the controller structure given in Figure \ref{fig:of}, with $\tf{\tilde\Phi}_{xx}= z (I - z \Phixx)$, $\tf{\tilde\Phi}_{ux} = z \Phiux$, and $\tf{\tilde\Phi}_{xy} = -z \Phixy$. As was the case for the state feedback setting, it can be verified that all three of these transfer matrices lie in $\RHinf$. Therefore, the structure given in Figure \ref{fig:of} is well defined. In addition, all of the blocks in Figure \ref{fig:of} are stable filters -- thus, as long as the origin $(x,\beta) = (0,0)$ is asymptotically stable, all signals internal to the block diagram will decay to zero. To check the internal stability of the structure, we introduce external perturbations $\ttf{\delta_x}, \ttf{\delta_y}$, $\ttf{\delta_u}$, and $\ttf{\delta_\beta}$ to the system. The perturbations appearing on other links of the block diagram can all be expressed as a combination of the perturbations $(\ttf{\delta_x}, \ttf{\delta_y}, \ttf{\delta_u}, \ttf{\delta_\beta})$ being acted upon by some stable transfer matrices, and so it suffices to check the input-output stability of the closed loop transfer matrices from perturbations $(\ttf{\delta_x}, \ttf{\delta_y}, \ttf{\delta_u}, \ttf{\delta_\beta})$ to controller signals $(\tf x, \tf u, \tf y, \ttf{\beta})$ to determine the internal stability of the structure \cite{Zhou1996robust}.

For any system response $\{\Phixx, \Phiux, \Phixy, \Phiuy\}$ lying in the affine subspace defined by \eqref{eq:output_fb}, we construct a controller using the structure given in Figure \ref{fig:of}. 
We now check the stability of the closed loop transfer matrices from the perturbations $(\ttf{\delta_x}, \ttf{\delta_y}, \ttf{\delta_u}, \ttf{\delta_\beta})$ to the internal variables $(\tf x, \tf u, \tf y, \ttf{\beta})$. We have the following equations from Figure \ref{fig:of}:
\begin{eqnarray}
z \tf x &=& A \tf x + B_2 \tf u + \ttf{\delta_x} \nonumber\\
\tf y &=& C_2 \tf x + \ttf{\delta_y} \nonumber\\
z \ttf{\beta} &=& \tf{\tilde\Phi}_{xx} \ttf{\beta} +\tf{\tilde\Phi}_{xy}  \tf y + \ttf{\delta_\beta} \nonumber\\
\tf u &=& \tf{\tilde\Phi}_{ux} \ttf{\beta} + \Phiuy \tf y + \ttf{\delta_u}. \nonumber
\end{eqnarray}
Combining these equations with the relations in \eqref{eq:output_fb1}--\eqref{eq:output_fb2}, we summarize the closed loop transfer matrices from $(\ttf{\delta_x}, \ttf{\delta_y}, \ttf{\delta_u}, \ttf{\delta_\beta})$ to $(\tf x, \tf u, \tf y, \ttf{\beta})$ in Table \ref{Table:1}.

\begin{table}[t!]
 \caption{Closed Loop Maps from Perturbations to Internal Variables}
 \label{Table:1}
\begin{center}
\renewcommand{\arraystretch}{2}
    \begin{tabular}{| c | c | c | c | c | c |}
    \hline 
    & $\ttf{\delta_x}$ & $\ttf{\delta_y}$ & $\ttf{\delta_u}$ & $\ttf{\delta_\beta}$ \\ \hline
    $\tf x$ & $\Phixx$ & $\Phixy$ & $\Phixx B_2$ & $\frac{1}{z} \Phixy C_2$ \\ \hline
    $\tf u$ & $\Phiux$ & $\Phiuy$ & $I + \Phiux B_2$ & $\frac{1}{z} \Phiuy C_2$ \\ \hline
    $\tf y$ & $C_2 \Phixx$ & $I + C_2 \Phixy$ & $C_2 \Phixx B_2$ & $\frac{1}{z} C_2 \Phixy C_2$ \\ \hline
    $\ttf{\beta}$ & $-\frac{1}{z} B_2 \Phiux$ & $-\frac{1}{z} B_2 \Phiuy$ & $-\frac{1}{z} B_2 \Phiux B_2$ & $\frac{1}{z} I - \frac{1}{z^2} (A + B_2 \Phiuy C_2)$ \\ \hline
    \end{tabular}
\end{center}
\end{table}

Equation \eqref{eq:output_fb3} implies that all sixteen transfer matrices in Table \ref{Table:1} are stable, so the implementation in Figure \ref{fig:of} is internally stable. Furthermore, the desired system response from $(\ttf{\delta_x}, \ttf{\delta_y})$ to $(\tf x,\tf u)$ is achieved.
\end{proof}

The controller implementation of Figure \ref{fig:of} is governed by the following equations:
\begin{eqnarray}
z \ttf{\beta} &=&  \tf{\tilde\Phi}_{xx} \ttf{\beta} +\tf{\tilde\Phi}_{xy} \tf y \nonumber\\
\tf u &=& \tf{\tilde\Phi}_{ux}\ttf{\beta} + \Phiuy \tf y, \label{eq:ss_like}
\end{eqnarray}
which can be informally interpreted as an extension of the state-space realization \eqref{eq:Kss} of a controller $\tf K$. In particular, the realization equations \eqref{eq:ss_like} can be viewed as a state-space like implementation where the constant matrices $A_K, B_K, C_K, D_K$ of the state-space realization \eqref{eq:Kss} are replaced with stable proper transfer matrices $ \tf{\tilde\Phi}_{xx} ,  \tf{\tilde\Phi}_{ux}, \tf{\tilde\Phi}_{xy}, \Phiuy$.  The benefit of this implementation is that arbitrary convex constraints imposed on the transfer matrices $ \tf{\tilde\Phi}_{xx} ,  \tf{\tilde\Phi}_{ux}, \tf{\tilde\Phi}_{xy}, \Phiuy$. carry over directly to the controller implementation.  We show in Section \ref{sec:output-locality}, as in the state-feedback setting, that this transparency allows for a class of structural (locality) constraints to be imposed on the system response (and hence the controller) that are crucial for extending controller synthesis methods to large-scale systems. \rev{In contrast, we recall that imposing general convex constraints on the controller $\tf K$ or directly on its state-space realization $A_K, B_K, C_K, D_K$ do not lead to convex optimal control problems.}

\revsecond{\begin{remark}
The controller implementation \eqref{eq:ss_like} admits the following equivalent representation
\begin{equation}
\begin{bmatrix} \Phixx & \Phixy \\ \Phiux & \Phiuy \end{bmatrix} \begin{bmatrix}z \ttf{\beta} \\ \tf y \end{bmatrix} = \begin{bmatrix} 0 \\ \tf u\end{bmatrix}, \label{eq:ros_rep}
\end{equation}
allowing for an interesting interpretation of the controller $\tf K = \Phiuy - \Phiux \Phixx^{-1} \Phixy$ in terms of Rosenbrock system matrix representations \cite{Rosenbrock}. In particular, the system response \eqref{eq:cltm_df} specifies a Rosenbrock system matrix representation of the controller that achieves it.
\end{remark}}

\rev{\subsection{Specialized Implementations for Open-loop Stable Systems}}
\rev{In this subsection, we propose two specializations of the controller implementation in Figure \ref{fig:of} for open loop stable systems.} From Table \ref{Table:1}, if we set $\ttf{\delta_u}$ and $\ttf{\delta_\beta}$ to $0$, it follows that $\ttf{\beta} = -\frac{1}{z}B_2 \tf u$. 
This leads to a simpler controller implementation given by $\tf u = \Phiuy \tf y - \Phiux B_2 \tf u$, with the corresponding controller structure shown in Figure \ref{fig:alt1}. 
This implementation can also be obtained from the identity $ \tf K = (I + \Phiux B_2)^{-1} \Phiuy$, which follows from the relations in \eqref{eq:K_relation}. 
Unfortunately, as shown below, this implementation is internally stable only when the open loop plant is stable.  

For the controller implementation and structure shown in Figure \ref{fig:alt1}, the closed loop transfer matrices from perturbations to the internal variables are given by
\begin{equation}
\begin{bmatrix} \tf x \\ \tf u \end{bmatrix} = \begin{bmatrix} \Phixx & \Phixy & \Phixx B_2 & (zI-A)^{-1} B_2 \\ \Phiux & \Phiuy & I + \Phiux B_2 & I \end{bmatrix} \begin{bmatrix} \ttf{\delta_x} \\ \ttf{\delta_y} \\ \ttf{\delta_u} \\ \ttf{\delta_\beta} \end{bmatrix}. \label{eq:alt1}
\end{equation}
When $A$ defines a stable system, the implementation in Figure \ref{fig:alt1} is internally stable. However, when the open loop plant is unstable (and the realization $(A,B_2)$ is stabilizable), the transfer matrix $(zI-A)^{-1} B_2$ is unstable. From \eqref{eq:alt1}, the effect of the perturbation $\ttf{\delta_\beta}$ can lead to instability of the closed loop system. This structure thus shows the necessity of introducing and analyzing the effects of perturbations $\ttf{\delta_\beta}$ on the controller internal state.

Alternatively, if we start with the identity $\tf K = \Phiuy (I + C_2 \Phixy)^{-1}$, which also follows from \eqref{eq:K_relation}, we obtain the controller structure shown in Figure \ref{fig:alt2}. The closed loop map from perturbations to internal signals is then given by
\begin{equation}
\begin{bmatrix} \tf x \\ \tf u \\ \ttf{\beta} \end{bmatrix} = \begin{bmatrix} \Phixx & \Phixy & \Phixx B_2 \\ \Phiux & \Phiuy & I + \Phiux B_2 \\ C_2 (zI-A)^{-1} & I & C_2 (zI-A)^{-1} B_2 \end{bmatrix} \begin{bmatrix} \ttf{\delta_x} \\ \ttf{\delta_y} \\ \ttf{\delta_u} \end{bmatrix}. \nonumber
\end{equation}
As can be seen, the controller implementation is once again internally stable only when the open loop plant is stable (if the realization $(A,C_2)$ is detectable). This structure thus shows the necessity of introducing and analyzing the effects of perturbations on the controller internal state $\ttf{\beta}$.


Of course, when the open loop system is stable, the controller structures illustrated below may be appealing as they are simpler and easier to implement. \rev{In fact, we can show that the controller structure in Figure \ref{fig:alt1} is an alternative realization of the internal model control principle (IMC) \cite{IMC1, IMC2} as applied to the Youla parameterization. Specifically, for open loop stable systems, the Youla parameter is given by $\tf Q = \tf K (I - \tf{P_{22}} \tf K)^{-1}$. As is show in Lemma 5 of \cite{SysLevelSyn1}, the Youla parameter $\tf Q$ is equal to the system response $\Phiuy$ for open loop stable systems. We then have}
\rev{
\begin{subequations} \label{eq:imc}
\begin{align}
\tf u &= \Phiuy \tf y - \Phiux B_2 \tf u \label{eq:imc0}\\
&= \tf Q \tf y - \Phiuy  C_2 (zI-A)^{-1} B_2 \tf u \label{eq:imc1} \\
&= \tf Q \tf y - \tf Q \tf{P_{22}} \tf u \label{eq:imc2} \\
& = \tf Q (\tf y - \tf{P_{22}} \tf u), \label{eq:imc3}
\end{align}
\end{subequations} 
where \eqref{eq:imc1} is obtained by substituting $\Phiux = \Phiuy  C_2 (zI-A)^{-1}$ from \eqref{eq:output_fb2} into \eqref{eq:imc0}. Equation \eqref{eq:imc3} is exactly IMC. Thus, we see that IMC is equivalent to our proposed parameterization (and the simplified representation shown in Figure \ref{fig:alt1}) for open loop stable systems. } 

\begin{figure}[ht!]
      \centering
      \subfigure[\rev{Internal Model Control}]{%
      \includegraphics[width=0.5\textwidth]{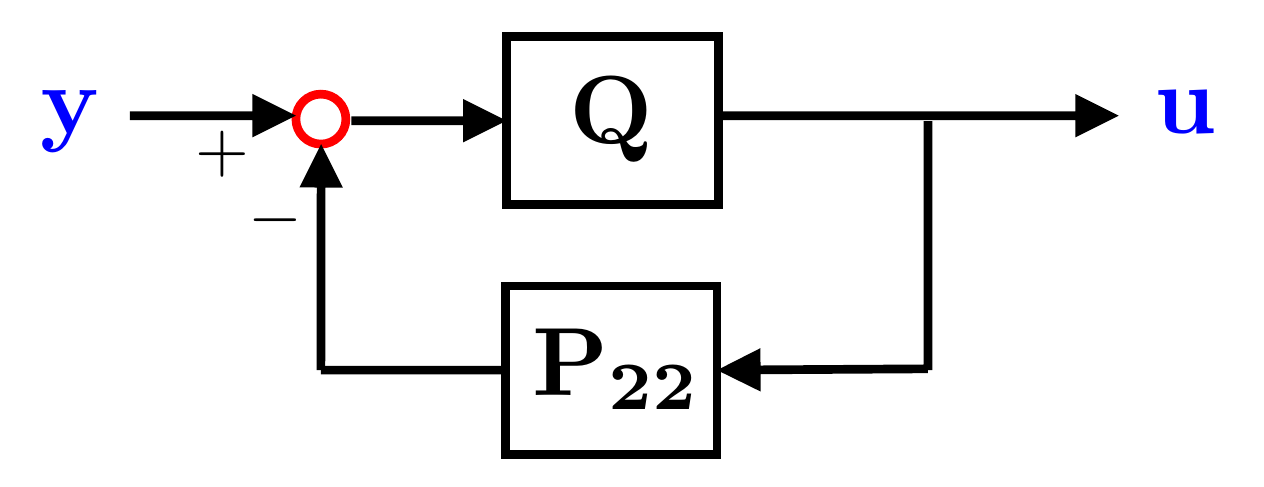}
      \label{fig:alt0}}
      
      \subfigure[Structure 1]{%
      \includegraphics[width=0.5\textwidth]{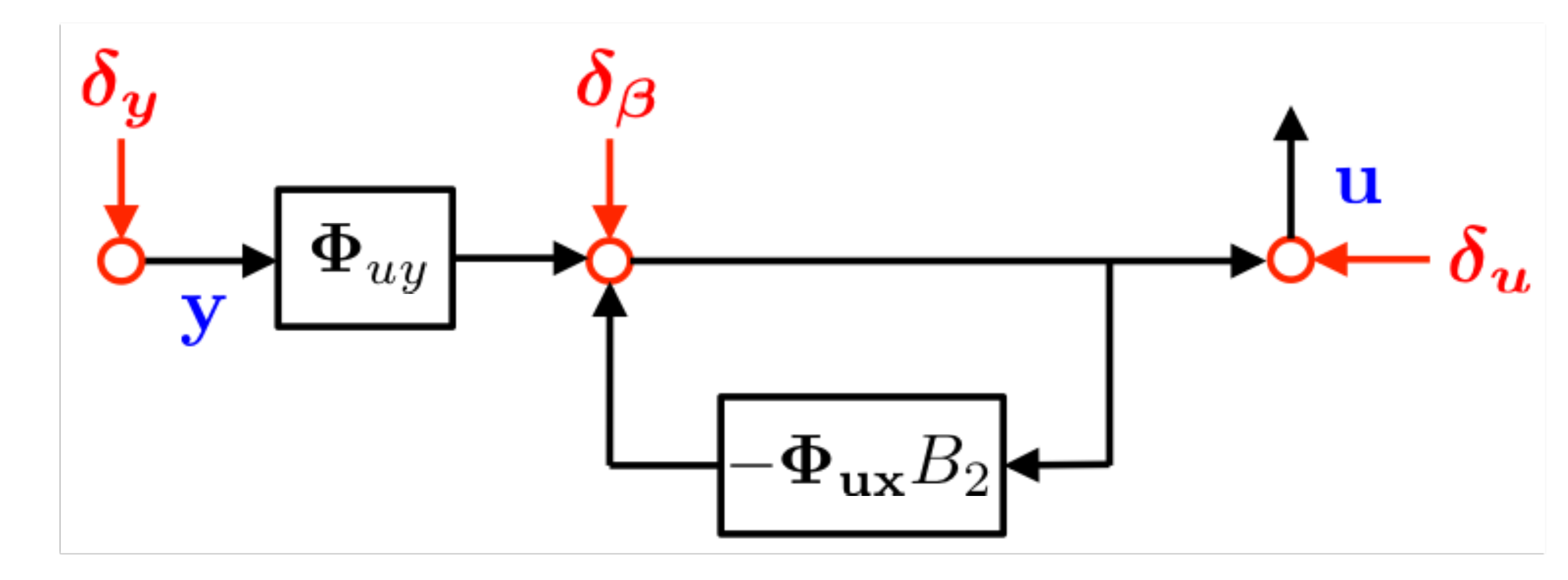}
      \label{fig:alt1}}
      
      \subfigure[Structure 2]{%
      \includegraphics[width=0.5\textwidth]{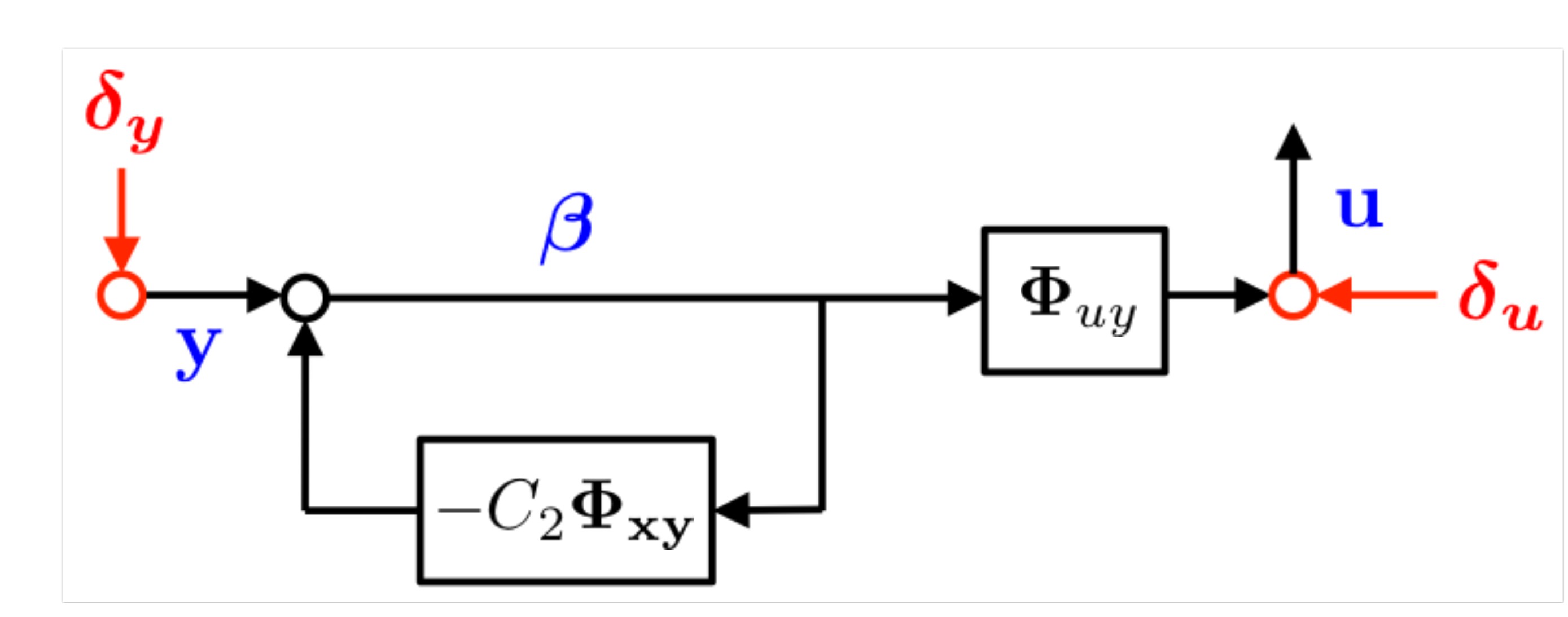}
      \label{fig:alt2}}
      
      \caption{Alternative controller structures for stable systems.}
\end{figure}

\subsection{Output Feedback with $D_{22} \not = 0$} \label{sec:non-sp}
Finally, for a general proper plant model \eqref{eq:sys-dynamics} with $D_{22} \not = 0$, we define a new measurement $\bar{y}[t] = y[t] - D_{22} u[t]$. This leads to the controller structure shown in Figure \ref{fig:pof}. In this case, the closed loop transfer matrices from $\ttf{\delta_u}$ to the internal variables become
\begin{equation}
\begin{bmatrix} \tf x \\ \tf u \\ \tf y \\ \ttf{\beta} \end{bmatrix} = \begin{bmatrix} \Phixx B_2 + \Phixy D_{22} \\ I + \Phiux B_2 + \Phiuy D_{22} \\ C_2 \Phixx B_2 + D_{22} + C_2 \Phixy D_{22} \\ -\frac{1}{z}B_2 (\Phiux B_2 + \Phiuy D_{22}) \end{bmatrix} \ttf{\delta_u}. \nonumber
\end{equation}
The remaining entries of Table \ref{Table:1} remain the same. Therefore, the controller structure shown in Figure \ref{fig:pof} internally stabilizes the plant.
\begin{figure}[ht!]
      \centering
      \includegraphics[width=0.4\textwidth]{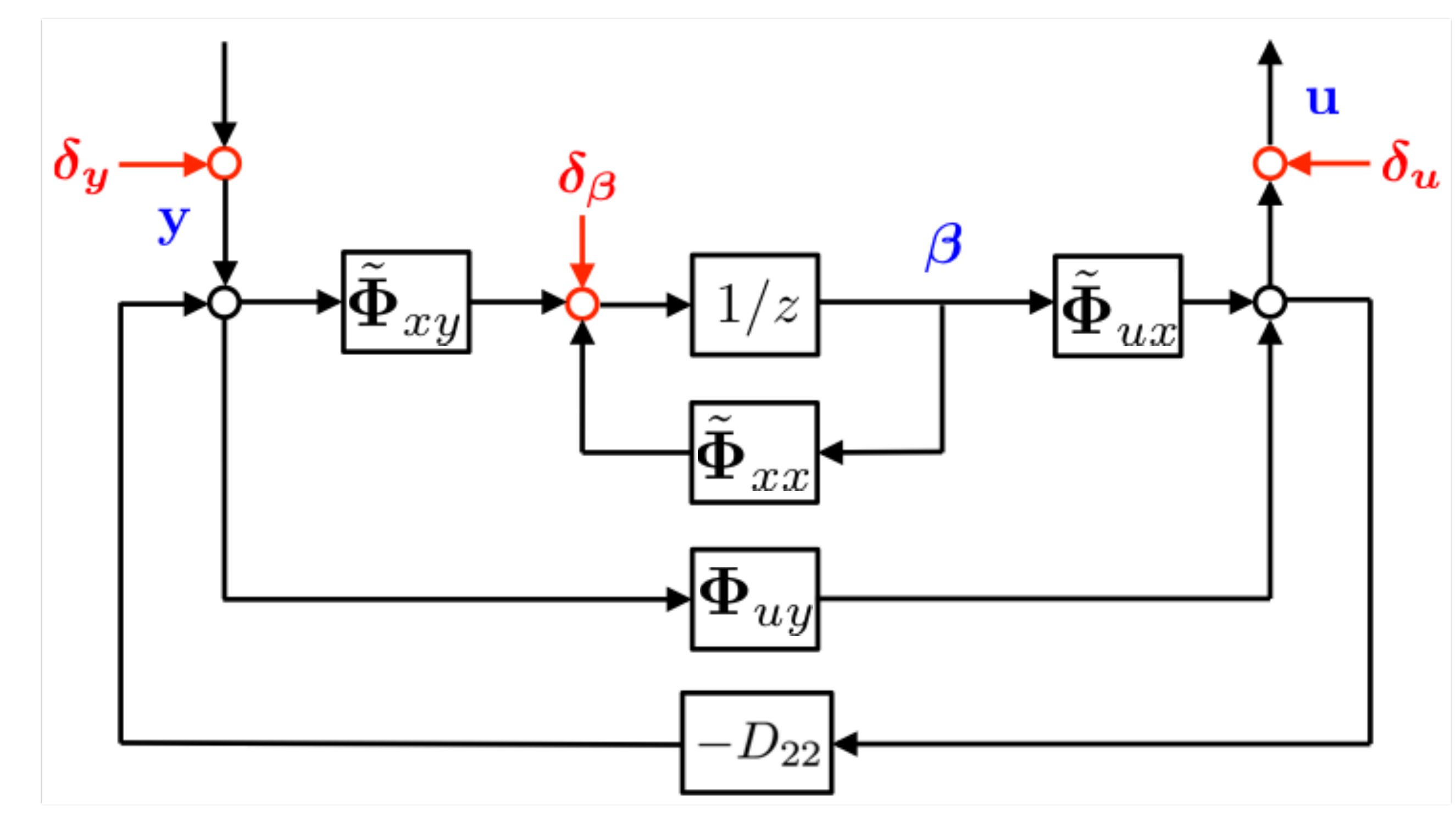}
      \caption{The proposed output feedback controller structure for $D_{22} \not = 0$.}
      \label{fig:pof}
\end{figure}

\subsubsection*{\revsecond{System Level and Youla Parameterizations}}
\revsecond{A key difference between the SL and Youla parameterizations is the manner in which they characterize the achievable closed loop responses of a system.  The Youla parameterization provides an image space representation of the achievable system responses, parameterized explicitly by the free Youla parameter.  This parameterization lends itself naturally to efficient computation via the standard and theoretically supported approach \cite{Boyd_closed} of restricting the Youla parameter and objective function to be FIR.  However, despite this ease of computation, as alluded to in Section \ref{sec:prior}, imposing sparsity constraints on the controller via the Youla parameter in general intractable.}

\revsecond{In contrast, the proposed SL parameterization specifies a kernel space representation of achievable system responses, parameterized implicitly by the affine space \eqref{eq:output_fb1} - \eqref{eq:output_fb2}.  While our discussion highlights the benefits and flexibility of the SL approach, there is the important caveat that the affine constraints \eqref{eq:output_fb1} - \eqref{eq:output_fb2} are in general infinite dimensional. Hence, although the parameterization is a convex one, it does not immediately lend itself to efficient computation.  However, much as in the state-feedback setting, imposing (approximate) FIR constraints on the system responses leads to a finite-dimensional optimization problem.}

\subsection{Output Feedback System Level Synthesis}
We conclude this subsection by formulating the general output feedback SLS problem.  Proceeding as in Section \ref{sec:state_feedback_SLS}, we use Theorem~\ref{thm:of} to pose the following optimal control problem
\begin{equation}
\begin{array}{rl}
 \min_{\Phixx, \Phiux,\Phixy,\Phiuy}&  g(\Phixx,\Phiux,\Phixy,\Phiuy) \\
\st &  \text{constraints \eqref{eq:output_fb}}\\
& \Phix, \Phiu \in \frac{1}{z}\mathcal{R}\hinf \cap \Ss,
\end{array}\label{eq:of-opt}
\end{equation}
where as before, $g(\cdot)$ is an appropriately specified objective function, constraints \eqref{eq:output_fb} ensure achievability of the resulting system responses, and $\Ss$ can be used to impose SLCs.  A typical choice for the objective function $g(\cdot)$ is
\begin{equation}
g(\Phixx,\Phiux,\Phixy,\Phiuy) = \bignorm{\begin{bmatrix}C_1 & D_{12}\end{bmatrix}\begin{bmatrix} \Phixx & \Phixy \\ \Phiux & \Phiuy\end{bmatrix}\begin{bmatrix}B_1 \\ D_{21}\end{bmatrix}},
\label{eq:of-obj}
\end{equation}
for $\norm{\cdot}$ a suitably chosen system norm such as $\htwo$, $\hinf$, or $\Ell_1$.

In the next subsection, we show how the imposition of localized SLCs once again leads to scalable controller synthesis in the distributed setting.

\subsection{Distributed Control, Locality, and Scalability}
\label{sec:output-locality}
In this section, we extend the separable objectives and constraints introduced previously to encompass a class of \emph{partially} separable objectives and constraints, and show how these properties can be exploited to solve the general form of the output feedback SLS problem \eqref{eq:of-opt} when locality constraints are imposed in a scalable manner by using distributed optimization techniques such as ADMM.\footnote{Note that \eqref{eq:of-opt} is neither column-wise nor row-wise separable due to the coupling constraints \eqref{eq:output_fb1} and \eqref{eq:output_fb2}.} 
We further show that many constrained optimal control problems are partially separable. Examples include the (localized) $\mathcal{H}_2$ optimal control problem with joint sensor and actuator regularization and the (localized) mixed $\mathcal{H}_2 / \mathcal{L}_1$ optimal control problem. 

We begin by defining the partial separability of a generic optimization problem. We then specialize our discussion to partially separable instances of the SLS problem \eqref{eq:of-opt}, before ending this section with a discussion of computational techniques that can be used to accelerate the ADMM algorithm as applied to solving partially separable SLS problems.

\rev{Before proceeding, we remark that although, as we show below, many SLOs and SLCs of interest satisfy our definition of partial separability, some classical measures of performance do not.  For instance the $\mathcal{H}_\infty$ norm of the closed loop response is not partially separable. This however hints at an interesting direction for future work: can suitable notions of robustness to $\ell_2$ bounded disturbances be defined for the large-scale setting that are partially separable?}

\subsection{Partially Separable Problems} \label{sec:gopt-o}
We begin with a generic optimization problem given by
\begin{subequations} \label{eq:gopt-o}
\begin{align} 
\underset{\ttf{\Phi}}{\text{minimize }} \quad & g(\ttf{\Phi}) \label{eq:gopt-o1}\\
\text{subject to } \quad & \ttf{\Phi} \in \s, \label{eq:gopt-o2}
\end{align}
\end{subequations}
where $\ttf{\Phi}$ is a $m \times n$ transfer matrix, $g(\cdot)$ a functional objective, and $\s$ a set constraint. We assume that the optimization problem \eqref{eq:gopt-o} can be written in the form
\begin{subequations} \label{eq:gopt-od}
\begin{align} 
\underset{\ttf{\Phi}}{\text{minimize }} \quad & \rowdec{g}(\ttf{\Phi}) + \coldec{g}(\ttf{\Phi}) \label{eq:gopt-od1}\\
\text{subject to } \quad & \ttf{\Phi} \in \rowdec{\s} \cap \coldec{\s}, \label{eq:gopt-od2}
\end{align}
\end{subequations}
where $\rowdec{g}$ and $\rowdec{\s}$ are row-wise separable\footnote{Row-wise separability of functions and constraint sets is defined in the obvious manner, analogous to column-wise separability from Definition~\ref{def:cols}.} and $\coldec{g}$ and $\coldec{\s}$ are column-wise separable. If optimization problem \eqref{eq:gopt-o} can be written in the form of problem \eqref{eq:gopt-od}, then optimization problem \eqref{eq:gopt-o} is called \emph{partially separable}.

Due to the coupling between the row-wise and column-wise separable components, optimization problem \eqref{eq:gopt-od} admits neither a row-wise nor a column-wise decomposition. Our strategy is to instead use distributed optimization techniques such as ADMM to decouple the row-wise separable component and the column-wise separable components: we show that this leads to iterate subproblems that are column/row-wise separable, allowing the results of the previous section to be applied to solve the iterate subproblems using localized and parallel computation. 

\begin{definition} \label{dfn:do}
Let $\{ \rowps_1, \dots, \rowps_{q}\}$ be a partition of the set $\{1, \cdots, m\}$ and $\{ \colps_1, \dots, \colps_{p}\}$ be a partition of the set $\{1, \cdots, n\}$. The functional objective $g(\ttf{\Phi})$ in \eqref{eq:gopt-o1} is said to be \emph{partially separable} with respect to the row-wise partition $\{ \rowps_1, \dots, \rowps_{q}\}$ and the column-wise partition $\{ \colps_1, \dots, \colps_{p}\}$ if $g(\ttf{\Phi})$ can be written as the sum of $\rowdec{g}(\ttf{\Phi})$ and $\coldec{g}(\ttf{\Phi})$, where $\rowdec{g}(\ttf{\Phi})$ is row-wise separable with respect to the row-wise partition $\{ \rowps_1, \dots, \rowps_{q}\}$ and $\coldec{g}(\ttf{\Phi})$ is column-wise separable with respect to the column-wise partition $\{ \colps_1, \dots, \colps_{p} \}$. Specifically, we have 
\begin{eqnarray}
g(\ttf{\Phi}) &=& \rowdec{g}(\ttf{\Phi}) + \coldec{g}(\ttf{\Phi}) \nonumber\\
\rowdec{g}(\ttf{\Phi}) &=& \sum_{j=1}^{q} \rowdec{g}_j(\Sub{\ttf{\Phi}}{\rowps_j}{:}) \nonumber\\
\coldec{g}(\ttf{\Phi}) &=& \sum_{j=1}^{p} \coldec{g}_j(\Sub{\ttf{\Phi}}{:}{\colps_j}) \label{eq:obj_decom2}
\end{eqnarray}
for some functionals $\rowdec{g}_j(\cdot)$ for $j = 1, \dots, q$, and $\coldec{g}_j(\cdot)$ for $j = 1, \dots, p$.  
\end{definition}

\begin{example}
Let $\Phi$ be the optimization variable, and $E$ and $F$ be two matrices of compatible dimension. The objective function
\begin{equation}
\| E \Phi \|_{\mathcal{F}}^2 + \| \Phi F \|_{\mathcal{F}}^2 \nonumber
\end{equation}
is then partially separable. Specifically, the first term is column-wise separable, and the second term is row-wise separable.
\end{example}

\begin{definition} \label{dfn:ss}
The set constraint $\s$ in \eqref{eq:gopt-o2} is said to be \emph{partially separable} with respect to the row-wise partition $\{ \rowps_1, \dots, \rowps_{q}\}$ and the column-wise partition $\{ \colps_1, \dots, \colps_{p}\}$ if $\s$ can be written as the intersection of two sets $\rowdec{\s}$ and $\coldec{\s}$, where $\rowdec{\s}$ is row-wise separable with respect to the row-wise partition $\{ \rowps_1, \dots, \rowps_{q}\}$ and $\coldec{\s}$ is column-wise separable with respect to the column-wise partition $\{ \colps_1, \dots, \colps_{p} \}$. Specifically, we have 
\begin{eqnarray}
\s &=& \rowdec{\s} \cap \coldec{\s} \nonumber\\
\ttf{\Phi} \in \rowdec{\s} \quad &\Longleftrightarrow& \quad \Sub{\ttf{\Phi}}{\rowps_j}{:} \in \rowdec{\s}_j, \forall j \nonumber\\
\ttf{\Phi} \in \coldec{\s} \quad &\Longleftrightarrow& \quad \Sub{\ttf{\Phi}}{:}{\colps_j} \in \coldec{\s}_j, \forall j \label{eq:sdecom}
\end{eqnarray}
for some sets $\rowdec{\s}_j$ for $j = 1, \dots, q$ and $\coldec{\s}_j$ for $j = 1, \dots, p$.
\end{definition}

\begin{example} [Affine Subspace]
The affine subspace constraint
\begin{equation}
\tf G \ttf{\Phi} = \tf H \quad \text{and} \quad \ttf{\Phi} \tf E = \tf F \nonumber
\end{equation}
is partially separable. Specifically, the first constraint is column-wise separable, and the second is row-wise separable.
\end{example}
\begin{example} [Induced Matrix Norm]
The induced $1$-norm of a matrix $\Phi$ is given by
\begin{equation}
\| \Phi \|_{1} = \underset{ 1 \leq j \leq n}{\text{max}} \,\, \sum_{i=1}^m | \Phi_{ij} |, \nonumber
\end{equation}
and the induced $\infty$-norm of a matrix is given by
\begin{equation}
\| \Phi \|_{\infty} = \underset{ 1 \leq i \leq m}{\text{max}} \,\, \sum_{j=1}^n | \Phi_{ij} |. \nonumber
\end{equation}
The following constraint is partially separable:
\begin{equation}
\| \Phi \|_{1} \leq \gamma_1 \quad \text{and} \quad \| \Phi \|_{\infty} \leq \gamma_{\infty}. \nonumber
\end{equation}
Specifically, the first term is column-wise separable, and the second term is row-wise separable.
\end{example}

Assume now that the objective and the constraints in optimization problem \eqref{eq:gopt-o} are both partially separable with respect to a row-wise partition $\{ \rowps_1, \dots, \rowps_{q}\}$ and a column-wise partition $\{ \colps_1, \dots, \colps_{p}\}$. In this case, the optimization problem \eqref{eq:gopt-o} is partially separable and can be rewritten in the form specified by equation \eqref{eq:gopt-od}. We now propose an ADMM based algorithm that exploits the partially separable structure of the optimization problem.  As we show in the next subsection, when this method is applied to the SLS problem \eqref{eq:of-opt}, the iterate subproblems are column/row-wise separable, allowing us to apply the methods described in Section \ref{sec:scalability}.  In particular, if locality constraints are imposed, iterate subproblems can be solved using localized and parallel computation.

Let $\ttf{\Psi}$ be a duplicate of the optimization variable $\ttf{\Phi}$. We define the extended-real-value functionals $\rowdec{h}(\ttf{\Phi})$ and $\coldec{h}(\ttf{\Psi})$ by
\begin{eqnarray}
&& \rowdec{h}(\ttf{\Phi}) = \left\{ \begin{array}{ll}
\rowdec{g}(\ttf{\Phi}) & \textrm{if } \ttf{\Phi} \in \rowdec{\s}\\
\infty & \textrm{otherwise}
\end{array} \right. \nonumber\\
&& \coldec{h}(\ttf{\Psi}) = \left\{ \begin{array}{ll}
\coldec{g}(\ttf{\Psi}) & \textrm{if } \ttf{\Psi} \in \coldec{\s}\\
\infty & \textrm{otherwise.}
\end{array} \right. \label{eq:extended}
\end{eqnarray}
Problem \eqref{eq:gopt-od} can then be reformulated as
\begin{eqnarray}
\underset{\{ \ttf{\Phi}, \ttf{\Psi} \}}{\text{minimize    }} && \rowdec{h}(\ttf{\Phi}) + \coldec{h}(\ttf{\Psi}) \nonumber\\
\text{subject to }  && \ttf{\Phi} = \ttf{\Psi}. \label{eq:sub3}
\end{eqnarray}
Problem \eqref{eq:sub3} can be solved using ADMM \cite{BoydADMM}:
\begin{subequations}\label{eq:admm}
\begin{align}
\ttf{\Phi}^{k+1} &= \underset{\ttf{\Phi}}{\text{argmin }} \Big( \rowdec{h}(\ttf{\Phi}) + \frac{\rho}{2} || \ttf{\Phi} - \ttf{\Psi}^{k} + \ttf{\Lambda}^{k}||_{\mathcal{H}_2}^2 \Big) \nonumber \\
\label{eq:admm1} \\
\ttf{\Psi}^{k+1} &= \underset{\ttf{\Psi}}{\text{argmin }} \Big( \coldec{h}(\ttf{\Psi}) + \frac{\rho}{2} || \ttf{\Psi} - \ttf{\Phi}^{k+1} - \ttf{\Lambda}^{k}||_{\mathcal{H}_2}^2 \Big) \nonumber \\
\label{eq:admm2} \\
\ttf{\Lambda}^{k+1} &= \ttf{\Lambda}^{k} + \ttf{\Phi}^{k+1} - \ttf{\Psi}^{k+1} \label{eq:admm3}
\end{align}
\end{subequations}
where the square of the $\mathcal{H}_2$ norm is computed as in \eqref{eq:h2_comp}. 



In Appendix B of \cite{SysLevelSyn2}, we provide stopping criterion and prove convergence of the ADMM algorithm \eqref{eq:admm1} - \eqref{eq:admm3} to an optimal solution of \eqref{eq:gopt-od} (or equivalently, \eqref{eq:sub3}) under the following assumptions. 
\begin{assumption}
Problem \eqref{eq:gopt-od} has a feasible solution in the relative interior of the set $\s$. \label{as:1}
\end{assumption}
\begin{assumption} 
The functionals $\rowdec{g}(\cdot)$ and $\coldec{g}(\cdot)$ are closed, proper, and convex. \label{as:2}
\end{assumption}
\begin{assumption} 
The sets $\rowdec{\s}$ and $\coldec{\s}$ are closed and convex. \label{as:3}
\end{assumption}

\rev{We note that these assumptions are satisfied by typical SLCs that one may wish to apply.  For instance, all of the examples considered in this paper satisfy these conditions so long as they specify a non-empty feasible set.}

\subsection{Partially Separable SLS Problems} \label{sec:ps}
We now specialize our discussion to the output feedback SLS problem \eqref{eq:of-opt}. We use
\begin{equation}
\ttf{\Phi} = \begin{bmatrix} \Phixx & \Phixy \\ \Phiux & \Phiuy \end{bmatrix} \nonumber
\end{equation}
to represent the system response we want to optimize for. Denote by $\tf Z_{AB}$ the transfer matrix $\begin{bmatrix} zI-A & -B_2 \end{bmatrix}$, and by $\tf Z_{AC}$ the transfer matrix $\begin{bmatrix} zI-A^\top & -C_2^\top \end{bmatrix}^\top$. Let $J_B$ be the matrix in the right-hand-side of \eqref{eq:output_fb1} and $J_C$ be the matrix in the right-hand-side of \eqref{eq:output_fb2}. The localized SLS problem \eqref{eq:of-opt} can then be written as
\begin{subequations} \label{eq:of_lop}
\begin{align} 
\underset{\ttf{\Phi}}{\text{minimize }} \quad & g(\ttf{\Phi}) \label{eq:of-1}\\
\text{subject to } \quad & \tf Z_{AB} \ttf{\Phi} = J_B \label{eq:of-2} \\
& \ttf{\Phi} \tf Z_{AC} = J_C \label{eq:of-3} \\
& \ttf{\Phi} \in \s \label{eq:of-4}
\end{align}
\end{subequations}
with $\s = \Ell \cap \FT \cap \Sother$. Note that as already discussed, the affine subspace constraints \eqref{eq:of-2} and \eqref{eq:of-3} are partially separable with respect to arbitrary column-wise and row-wise partitions, respectively. Thus the SLS problem \eqref{eq:of_lop} is partially separable if the SLO \eqref{eq:of-1} and the SLC $\ttf \Phi \in \Sother$ are partially separable.

In particular, if $\Sother$ is partially separable, we can express the original SLC $\s$ as an intersection of the sets $\rowdec{\s} = \Ell \cap \FT \cap \rowdec{\Sother}$ and $\coldec{\s} = \Ell \cap \FT \cap \coldec{\Sother}$, where $\rowdec{\Sother}$ is a row-wise separable component of $\Sother$ and $\coldec{\Sother}$ a column-wise separable component of $\Sother$.
Note that the locality constraint and the FIR constraint are included in both $\rowdec{\s}$ and $\coldec{\s}$. 
This is the key point to allow the subroutines \eqref{eq:admm1} - \eqref{eq:admm2} of the ADMM algorithm to be solved using the techniques described in Section \ref{sec:scalability}.

To illustrate how the iterate subproblems of the ADMM algorithm \eqref{eq:admm}, as applied to the SLS problem \eqref{eq:of_lop}, can be solved using localized and parallel computation, we analyze in more detail the iterate subproblem \eqref{eq:admm2}, which is given by
\begin{subequations} \label{eq:admm_lop}
\begin{align} 
\underset{\ttf{\Psi}}{\text{minimize }} \quad & \coldec{g}(\ttf{\Psi}) + \frac{\rho}{2} || \ttf{\Psi} - \ttf{\Phi}^{k+1} - \ttf{\Lambda}^{k}||_{\mathcal{H}_2}^2 \label{eq:admmlop1}\\
\text{subject to } \quad & \ttf{\Psi} \in \coldec{\s} \label{eq:admmlop3}
\end{align}
\end{subequations}
with $\coldec{\s} = \Ell \cap \FT \cap \coldec{\Sother}$.
The $\mathcal{H}_2$ norm regularizer in \eqref{eq:admmlop1} is column-wise separable with respect to arbitrary column-wise partition.
As the objective $\coldec{g}(\cdot)$ and the constraint $\coldec{\s}$ are column-wise separable with respect to a given column-wise partition, we can use the column-wise separation technique described in the previous section to decompose \eqref{eq:admm_lop} into parallel subproblems. 
Then, as in the state-feedback setting, we can then exploit the locality constraint $\Ell$ to reduce the dimension of each subproblem from global to local scale.  Similarly, iterate subproblem \eqref{eq:admm1} can also be solved using parallel and localized computation by exploiting its row-wise separability.  Finally, update equation \eqref{eq:admm3} trivially decomposes element-wise since it is a matrix addition. 

We now give some examples of partially separable SLOs.

\begin{example}[Norm Optimal Control] \label{ex:b1}
Consider the SLO of the optimal control problem in \eqref{eq:of-opt} with objective function as in \eqref{eq:of-obj} and norm given by the square of the $\mathcal{H}_2$ norm. Suppose that there exists a permutation matrix $\Pi$ such that the matrix $\begin{bmatrix} B_1^\top & D_{21}^\top \end{bmatrix} \Pi$ is block diagonal. It is then possible to find a column-wise partition to decompose the SLO in a column-wise manner. Similarly, suppose that there exists a permutation matrix $\Pi$ such that the matrix $\begin{bmatrix} C_1 & D_{12} \end{bmatrix} \Pi$ is block diagonal. We can find a row-wise partition to decompose the SLO in a row-wise manner. In both cases, the SLO is column/row-wise separable and thus partially separable.
\end{example}
\begin{example}[Sensor and Actuator Norm]
Consider the weighted actuator norm defined in \cite{MC_CDC14, 2015_Wang_Reg}, which is given by
\begin{equation}
|| \mu \begin{bmatrix} \Phiux & \Phiuy \end{bmatrix} ||_{\mathcal{U}} = \sum_{i=1}^{n_u} \mu_i || e_i^\top \begin{bmatrix} \Phiux & \Phiuy \end{bmatrix} ||_{\mathcal{H}_2} \label{eq:act_norm} 
\end{equation}
where $\mu$ is a diagonal matrix with \rev{$\mu_i \geq 0$} being its $i^{th}$ diagonal entry, and $e_i$ is a unit vector $i^{th}$ entry set to $1$, and $0$ elsewhere. The actuator norm \eqref{eq:act_norm} can be viewed as an infinite dimensional analog to the weighted $\ell_1 / \ell_2$ norm, also known as the group lasso \cite{2006_grouplasso} in the statistical learning literature. Adding this norm as a regularizer to a SLS problem induces row-wise sparse structure in the transfer matrix $\begin{bmatrix} \Phiux & \Phiuy \end{bmatrix}$. Recall from Theorem \ref{thm:of} that the controller achieving the desired system response can be implemented via \eqref{eq:ss_like}. If the $i$th row of the transfer matrix $\begin{bmatrix} \Phiux & \Phiuy \end{bmatrix}$ is identically zero, then the $i^{th}$ component of the control action $u_i$ is always equal to zero, and therefore the actuator at node $i$ corresponding to control action $u_i$ can be removed without changing the closed loop response. It is clear that the actuator norm defined in \eqref{eq:act_norm} is row-wise separable with respect to arbitrary row-wise partition. This still holds true when the actuator norm is defined by the $\ell_1 / \ell_\infty$ norm. Similarly, consider the weighted sensor norm given by
\begin{equation}
|| \begin{bmatrix} \Phixy \\ \Phiuy \end{bmatrix} \lambda ||_{\mathcal{Y}} = \sum_{i=1}^{n_y} \lambda_i || \begin{bmatrix} \Phixy \\ \Phiuy \end{bmatrix} e_i ||_{\mathcal{H}_2} \label{eq:sen_norm} 
\end{equation}
where $\lambda$ is a diagonal matrix with \rev{$\lambda_i \geq 0$} being its $i$th diagonal entry. The sensor norm \eqref{eq:sen_norm}, when added as a regularizer, induces column-wise sparsity in the transfer matrix $\begin{bmatrix} \Phixy^\top & \Phiuy^\top \end{bmatrix}^\top$. Using the controller implementation \eqref{eq:ss_like}, the sensor norm can therefore be viewed as regularizing the number of sensors used by a controller. For instance, if the $i^{th}$ column of the transfer matrix $\begin{bmatrix} \Phixy^\top & \Phiuy^\top \end{bmatrix}^\top$ is identically zero, then the sensor at node $i$ and its corresponsing measurement $y_i$ can be removed without changing the closed loop response. The sensor norm defined in \eqref{eq:act_norm} is column-wise separable with respect to any column-wise partition.
\end{example}
\begin{example} [Combination]
From Definition \ref{dfn:do}, it is straightforward to see that the class of partially separable SLOs with respect to the same partitions are closed under summation.
Therefore, we can combine the partially separable SLOs described above, and the resulting SLO is still partially separable. For instance, consider the SLO given by
\begin{eqnarray}
g(\Phixx,\Phiux,\Phixy,\Phiuy) &=& || \begin{bmatrix} C_1 & D_{12} \end{bmatrix} \begin{bmatrix} \Phixx & \Phixy \\ \Phiux & \Phiuy \end{bmatrix} \begin{bmatrix} B_1 \\ D_{21} \end{bmatrix} ||_{\mathcal{H}_2}^2 \nonumber\\
&&+ || \mu \begin{bmatrix} \Phiux & \Phiuy \end{bmatrix} ||_{\mathcal{U}} + || \begin{bmatrix} \Phixy \\ \Phiuy \end{bmatrix} \lambda ||_{\mathcal{Y}} \label{eq:h2_sar}
\end{eqnarray}
where \rev{$\mu$ and $\lambda$ are non-negative diagonal matrices specifying the relative weighting between the closed loop $\mathcal{H}_2$ performance, and the actuator and sensor norm penalties, respectively. }
If there exists a permutation matrix $\Pi$ such that the matrix $\begin{bmatrix} C_1 & D_{12} \end{bmatrix} \Pi$ is block diagonal, then the SLO \eqref{eq:h2_sar} is partially separable. 
Specifically, the $\mathcal{H}_2$ norm and the actuator regularizer belong to the row-wise separable component, and the sensor regularizer belongs to the column-wise separable component.\footnote{Note that an alternative penalty is proposed in \cite{MC_CDC14} for the design of joint actuation and sensing architectures; it is however more involved to define, and hence we restrict ourself to this alternative penalty for the purposes of illustrating the concept of partially separable SLOs.}
\end{example}

We now provide some examples of partially separable SLCs.
\begin{example} [$\mathcal{L}_1$ Constraint]
The $\mathcal{L}_1$ norm of a transfer matrix is given by its worst case $\ell_\infty$ to $\ell_\infty$ gain.  In particular, the $\mathcal{L}_1$ norm \cite{Munther_L1} of a FIR transfer matrix $\tf G \in \FT$ is given by
\begin{equation}
|| \tf G ||_{\mathcal{L}_1} = \underset{i}{\text{max}} \quad \sum_{j} \sum_{t=0}^T | g_{ij}[t] |.\label{eq:L1_defn}
\end{equation}
We can therefore add the constraint
\begin{equation}
|| \begin{bmatrix} \Phixx & \Phixy \\ \Phiux & \Phiuy \end{bmatrix} \begin{bmatrix} B_1 \\ D_{21} \end{bmatrix}||_{\mathcal{L}_1} \leq \gamma \label{eq:L1_cons}
\end{equation}
to the optimization problem \eqref{eq:of_lop} for some $\gamma$ to control the worst-case amplification of $\ell_\infty$ bounded signals.  From the definition \eqref{eq:L1_defn}, the SLC \eqref{eq:L1_cons} is row-wise separable with respect to any row-wise partition. 
\end{example}
\begin{example} [Combination]
From Definition \ref{dfn:ss}, the class of partially separable SLCs with respect to the same row and column partitions are closed under intersection, allowing for partially separable SLCs to be combined.
For instance, the combination of a locality constraint $\Ell$, a FIR constraint $\FT$, and an $\mathcal{L}_1$ constraint as in equation \eqref{eq:L1_cons} is partially separable.  This property is extremely useful as it provides a unified framework for dealing with several partially separable constraints at once.
\end{example}

 Using the previously described examples of partially separable SLOs and SLCs, we now consider two partially separable SLS problems: (i) the localized $\mathcal{H}_2$ optimal control problem with joint sensor and actuator regularization, and (ii) the localized mixed $\mathcal{H}_2 / \mathcal{L}_1$ optimal control problem. These two problems are used in Section \ref{sec:simu} as case study examples.

\begin{example}
The localized $\mathcal{H}_2$ optimal control problem with joint sensor and actuator regularization is given by
\begin{subequations} \label{eq:h2SA}
\begin{align} 
\underset{\{\Phixx, \Phiux, \Phixy, \Phiuy\}}{\text{minimize }} \quad & \eqref{eq:h2_sar} \label{eq:h2SA-1} \\
\text{subject to } \quad & \eqref{eq:output_fb1} - \eqref{eq:output_fb3}  \label{eq:h2SA-2} \\
& \begin{bmatrix} \Phixx & \Phixy \\ \Phiux & \Phiuy \end{bmatrix} \in \mathcal{C} \cap \Ell \cap \FT. \label{eq:h2SA-3}
\end{align}
\end{subequations}
where $\mathcal{C}$ encodes the information sharing constraints of the distributed controller. If there exists a permutation matrix $\Pi$ such that the matrix $\begin{bmatrix} C_1 & D_{12} \end{bmatrix} \Pi$ is block diagonal, then \eqref{eq:h2SA} is partially separable. 
\end{example}
\begin{remark}
\rev{When the actuator and sensor norm penalties are zero}, problem \eqref{eq:h2SA} reduces to the localized LQG optimal control problem defined and solved in \cite{2015_Wang_H2}.  Further, if the locality and FIR constraints are removed, we recover the standard LQG optimal control problem, which is also seen to be partially separable.
\end{remark}


Next we consider the localized mixed $\mathcal{H}_2/\mathcal{L}_1$ optimal control problem.
\begin{example}
The localized mixed $\mathcal{H}_2/\mathcal{L}_1$ optimal control problem is given by 
\begin{subequations} \label{eq:h2l1}
\begin{align} 
\underset{\{\Phixx, \Phiux, \Phixy, \Phiuy\}}{\text{minimize }} \quad & || \begin{bmatrix} \Phixx & \Phixy \\ \Phiux & \Phiuy \end{bmatrix} \begin{bmatrix} B_1 \\ D_{21} \end{bmatrix}||_{\mathcal{H}_2}^2 \label{eq:h2l1-1} \\
\text{subject to } \quad & \eqref{eq:output_fb1} - \eqref{eq:output_fb3}, \eqref{eq:L1_cons}, \eqref{eq:h2SA-3} \label{eq:h2l1-2}
\end{align}
\end{subequations}
which is partially separable.
\end{example}

The localized mixed $\mathcal{H}_2/\mathcal{L}_1$ optimal control problem can be used to design the tradeoff between average and worst-case performance, as measured by the $\mathcal{H}_2$ and $\mathcal{L}_1$ norms of the closed loop system, respectively. 


\subsection{Analytic Solution and Acceleration}
Suppose that the Assumptions \ref{as:1} - \ref{as:3} in Section \ref{sec:gopt-o} hold. The ADMM algorithm presented in \eqref{eq:admm} is a special case of the proximal algorithm \cite{proximal, proximalBoyd, BoydADMM}. For certain type of objective functionals $\rowdec{h}(\cdot)$ and $\coldec{h}(\cdot)$, the proximal operators can be evaluated analytically (see Ch. 6 of \cite{proximalBoyd}). In this situation, we only need to evaluate the proximal operators \emph{once}, and iterate \eqref{eq:admm1} and \eqref{eq:admm2} in closed form. This improves the overall computation time significantly.  In Appendix C of \cite{SysLevelSyn2}, it is shown how to express the solutions of \eqref{eq:admm1} and \eqref{eq:admm2} using proximal operators. Here we list a few examples for which the proximal operators can be evaluated analytically. 

\begin{example} \label{ex:analytic1}
Consider the LLQG problem proposed in \cite{2015_Wang_H2}.
When the global optimization problem is decomposed into parallel subproblems, each subproblem is a convex quadratic program restricted to an affine set. In this case, the proximal operator is an affine function \cite{2015_Wang_H2, proximalBoyd, BoydADMM}. We only need to calculate this affine function once. The iteration in \eqref{eq:admm1} - \eqref{eq:admm3} can then be carried out using multiple matrix multiplications in the reduced dimension, which significantly improves the overall computation time.
\end{example}
\begin{example} \label{ex:analytic2}
Consider the LLQR problem with actuator regularization \cite{2015_Wang_Reg}, which is the state feedback version of \eqref{eq:h2SA}. The column-wise separable component of this problem is identical to that of the LLQG example, and therefore the update subproblem \eqref{eq:admm2} can be solved using matrix multiplication, as described in the previous example.  We further showed in \cite{2015_Wang_Reg} that the row-wise separable component of this problem can be simplified to multiple unconstrained optimization problems, each with proximal operators given by vectorial soft-thresholding \cite{proximalBoyd}.  This offers an efficient way to solve the update supbroblem \eqref{eq:admm1}.
\end{example}

We end this section by noting that although our focus has been on how ADMM can be used to efficiently solve the localized SLS problem \eqref{eq:of-opt},  there exist other distributed algorithms that can exploit partial separability to improve computational efficiency.  For instance, if either $\rowdec{g}(\cdot)$ or $\coldec{g}(\cdot)$ is strongly convex, we can use the alternating minimization algorithm (AMA) \cite{AMA} to simplify the ADMM algorithm. 


\subsection{Case Studies}
\label{sec:simu}
In this section, we apply the localized $\mathcal{H}_2$ optimal control with sensor actuator regularization \eqref{eq:h2SA} problem and the localized mixed $\mathcal{H}_2 / \mathcal{L}_1$ optimal control problem \eqref{eq:h2l1} to a power system inspired example.
After introducing the power system model, we show that the localized $\mathcal{H}_2$ controller, with its additional locality, FIR and communication delay constraints can achieve comparable closed loop performance to that of a centralized $\mathcal{H}_2$ optimal controller.  We further demonstrate the scalability of the proposed method by synthesizing a localized $\mathcal{H}_2$ controller for a randomized heterogeneous networked system with $12800$ states using a single laptop; for such a large system, neither the centralized nor the distributed optimal controllers can be computed. We then solve the localized $\mathcal{H}_2$ optimal control problem with joint sensor and actuator regularization to co-design an output feedback controller and its actuation/sensing architecture. Finally, we solve the localized mixed $\mathcal{H}_2 / \mathcal{L}_1$ optimal control problem, thus identifying the achievable tradeoff curve between the average and worst-case performance of the closed loop system.  

\subsection{Power System Model}
We begin with a randomized spanning tree embedded on a $10 \times 10$ mesh network representing the interconnection between subsystems. The resulting interconnected topology is shown in Fig. \ref{fig:mesh} -- we assume that all edges are undirected.
\begin{figure}[ht!]
      \centering
      \subfigure[Interconnected topology]{%
      \includegraphics[width=0.36\columnwidth]{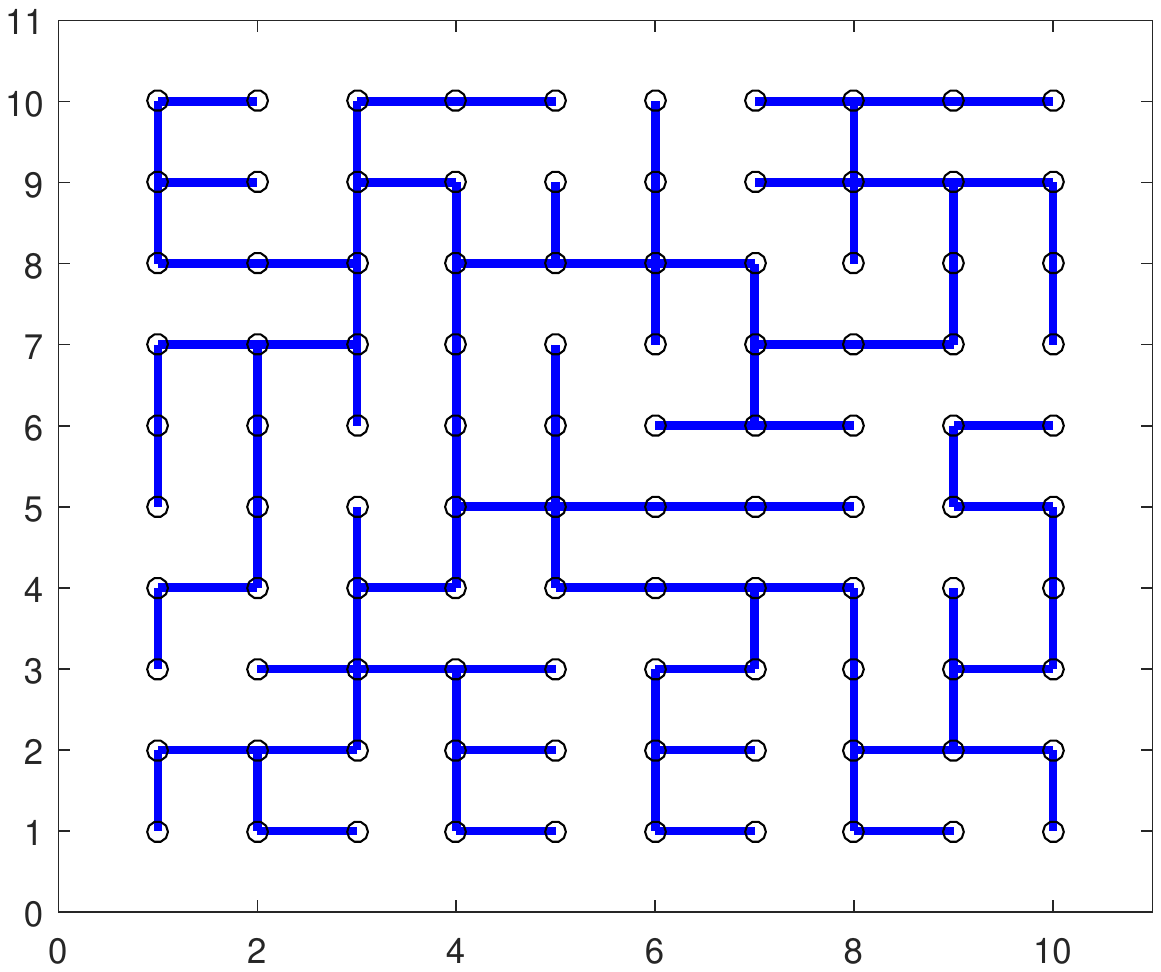}
      \label{fig:mesh}}      
      \subfigure[Interaction between neighboring subsystems]{%
      \includegraphics[width=0.45\columnwidth]{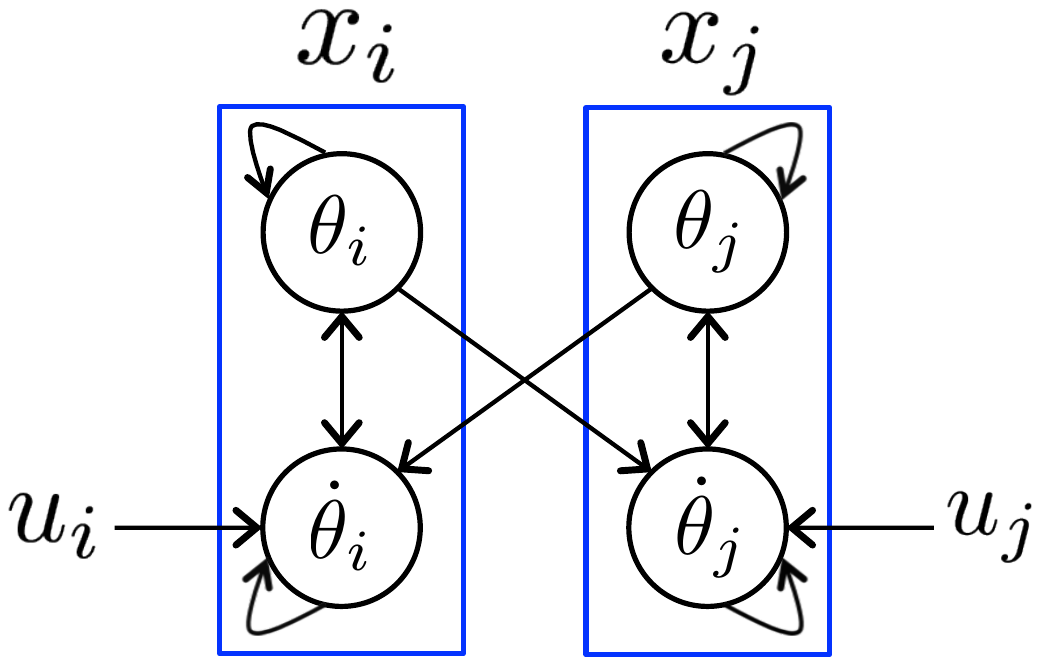}
      \label{fig:inter}} 
      \caption{Simulation example interaction graph.}
\end{figure}
The dynamics of each subsystem is given by the discretized swing equations. Consider the swing dynamics 
\begin{equation}
m_i \ddot{\theta_i} + d_i \dot{\theta_i} = -\sum_{j \in \mathcal{N}_i} k_{ij} (\theta_i - \theta_j) + w_i + u_i \label{eq:swing}
\end{equation}
where $\theta_i$, $\dot{\theta_i}$, $m_i$, $d_i$, $w_i$, $u_i$ are the phase angle deviation, frequency deviation, inertia, damping, external disturbance, and control action of the controllable load of bus $i$. The coefficient $k_{ij}$ is the coupling term between buses $i$ and $j$. We let $x_i := [\theta_i \quad \dot{\theta_i}]^\top$ be the state of bus $i$ and use $e^{A \Delta t} \approx I + A \Delta t$ to discretize the swing dynamics. Equation \eqref{eq:swing} can then be expressed in the form
\begin{eqnarray}
x_i[t+1] &=& A_{ii} x_i[t] + \sum_{j \in \mathcal{N}_i} A_{ij}x_j[t] + B_{ii} u_i[t] + \delta_{x_i}[t] \nonumber\\
y_i[t] &=& C_{ii} x_i[t] + \delta_{y_i}[t], \label{eq:interconnected}
\end{eqnarray}
with
\begin{equation}
A_{ii} = \begin{bmatrix} 1 & \quad \Delta{t} \\ -\frac{k_i}{m_i} \Delta{t} & \quad 1-\frac{d_i}{m_i} \Delta{t} \end{bmatrix}, \quad A_{ij} = \begin{bmatrix} 0 & 0 \\ \frac{k_{ij}}{m_i} \Delta{t} & 0 \end{bmatrix}, \nonumber
\end{equation}
and $B_{ii} = \begin{bmatrix} 0 & 1 \end{bmatrix}^\top$. We set $\Delta{t} = 0.2$ and $k_i = \sum_{j \in \mathcal{N}_i} k_{ij}$. In addition, the parameters $k_{ij}, d_i$, and $m_i^{-1}$ are randomly generated and uniformly distributed between $[0.5, 1]$, $[1, 1.5]$, and $[0, 2]$, respectively. 
The interaction between neighboring subsystems of the discretized model is illustrated in Figure \ref{fig:inter}. We initially assume that each subsystem in the power network has a phase measurement unit (PMU), a frequency sensor, and a controllable load that generates $u_i$.

From \eqref{eq:swing}, the external disturbance $w_i$ only directly affects the frequency deviation $\dot{\theta_i}$. 
To make the objective functional strongly convex, we introduce a small artificial disturbance on the phase deviation $\theta_i$ as well. 
We assume that the process noise on frequency and phase are uncorrelated and \rev{independently and identically distributed (i.i.d.) Gaussian random variables with zero mean,} and covariance matrices given by $I$ and $10^{-4}I$, respectively. In addition, we assume that both the phase deviation and the frequency deviation are measured with some sensor noise. The sensor noise of phase and frequency measurements are uncorrelated \rev{i.i.d. Gaussian random variables with zero mean, and} covariance matrices both given by $10^{-2}I$. We choose equal penalty on the state deviation and control effort, i.e., $\begin{bmatrix} C_1 & D_{12} \end{bmatrix} = I$. 

Based on the above setting, we formulate a $\mathcal{H}_2$ optimal control problem that minimizes the $\mathcal{H}_2$ norm of the transfer matrix from the process and sensor noises to the regulated output. The $\mathcal{H}_2$ norm of the closed loop is given by $13.3169$ when a proper centralized $\mathcal{H}_2$ optimal controller is applied, and $16.5441$ when a strictly proper centralized $\mathcal{H}_2$ optimal controller is applied. In the rest of this section, we normalize the $\mathcal{H}_2$ norm with respect to the proper centralized $\mathcal{H}_2$ optimal controller.



\subsection{Localized $\mathcal{H}_2$ Optimal Control}
The underlying assumption of the centralized optimal control scheme is that measurements can be  {instantaneously} transmitted among {all subsystems} in the network. 
To incorporate realistic communication delay constraints and facilitate the scalability of controller design, we impose additional communication delay constraints, locality constraints, and a FIR constraint on the system response. 

For the communication delay constraint $\mathcal C$, we assume that each subsystem takes one time step to transmit information to its neighboring subsystems.  Therefore, if subsystems $i$ and $j$ are $k$-hops apart (as defined by the interaction graph illustrated in Figure \ref{fig:mesh}), then the control action $u_i[t]$ at subsystem $i$ can only use the measurements $y_j[\tau]$ and internal controller state $\beta_j[\tau]$ of subsystem $j$ if $\tau \leq t-k$.

The interaction between subsystems illustrated in Fig. \ref{fig:inter} implies that it takes two discrete time steps for a disturbance at subsystem $j$ to propagate to its neighboring subsystems, and hence the communication speed between sub-controllers is twice as fast as the propagation speed of disturbances through the plant. For the given communication delay constraint $\mathcal C$, we use the method in \cite{2015_Wang_Reg} to design the tightest feasible locality constraint $\Ell$. In this example, we can localize the joint effect of the process and sensor noise at subsystem $j$ to a region defined by its two-hop neighbors (where one hop is as defined in terms of the interaction graph of the system). This implies that the sub-controller at node $j$ only needs to transmit its measurements $y_j$ and controller states $\beta_j$ within this localized region, and further only a restricted plant model (as defined by this localized region) is needed to synthesize its corresponding local control policy. 

Assuming a fixed communication delay constraint $\mathcal C$ and locality constraint $\Ell$, we first explore the tradeoff between the length $T$ of the FIR constraint $\FT$ and the transient performance of the closed loop system. Figure \ref{fig:T} shows the tradeoff curve between the transient performance of the localized $\mathcal{H}_2$ controller and the length $T$ of the FIR constraint.
For the given communication delay constraint $\mathcal C$ and the locality constraint $\Ell$, the localized $\mathcal{H}_2$ controller is feasible for FIR constraints $\FT$ whenever $T \geq 3$. When the length of the FIR constraint increases, the $\mathcal{H}_2$ norm of the closed loop converges quickly to the unconstrained optimal value. For instance, for FIR lengths of $T=7,\, 10,$ and $20$, the performance degradations of the localized FIR controllers with respect to the unconstrained $\mathcal{H}_2$ optimal controller are given by $3.8\%$, $1.0\%$, and $0.1\%$, respectively. This further means that the performance degradation due to the additional communication delay constraint $\mathcal C$ and the locality constraint $\Ell$ is less than $0.1\%$. From Figure \ref{fig:T}, we see that the localized $\mathcal{H}_2$ controller, with its additional communication delay, locality and FIR constraints can achieve similar transient performance to that of an unconstrained centralized (unimplementable) optimal $\mathcal{H}_2$ controller.

\begin{figure}[ht!]
      \centering
      \includegraphics[width=0.4\textwidth]{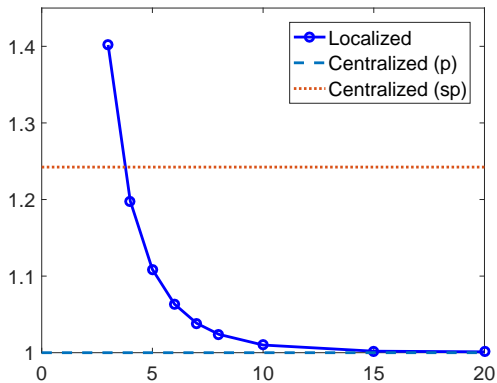}
      \caption{The vertical axis is the normalized $\mathcal{H}_2$ norm of the closed loop when the localized $\mathcal{H}_2$ controller is applied. The localized $\mathcal{H}_2$ controller is subject to the constraint $\mathcal C \cap \Ell \cap \FT$. The horizontal axis is the horizon $T$ of the FIR constraint $\FT$, which is also the settling time of an impulse disturbance. We plot the normalized $\mathcal{H}_2$ norm for the centralized unconstrained optimal controller (proper and strictly proper) in the same figure.}
      \label{fig:T}
\end{figure}

To further illustrate the advantages of the localized control scheme, we choose $T = 20$ and compare the localized optimal controller, distributed optimal controller, and the centralized optimal controller in terms of closed loop performance, complexity of controller synthesis and implementation, as well as ease of redesign,\footnote{\rev{The ability to locally redesign subsets of the control system has the appealing feature of allowing for real-time redesign in the case of local changes to the plant model.}} in Table \ref{Table:2}. The distributed optimal controller is computed using the method described in \cite{2014_Lamperski_H2_journal}, in which we assume the same communication constraint $\mathcal C$ as the localized optimal controller. It can be seen that the localized controller is vastly preferable in all aspects, except for a slight degradation in the closed-loop performance. 

\begin{table}[h]
 \caption{Comparison Between Centralized, Distributed, and Localized Control}
 \label{Table:2}
\begin{center}
    \begin{tabular}{| l | l | l | l | l |}
    \hline
     & & Cent. & Dist. & Local. \\ \hline
     & Affected region & Global & Global & $2$-hop \\ \cline{2-5}
     Closed Loop & Affected time & Long & Long & $20$ steps \\ \cline{2-5}
     & Normalized $\mathcal{H}_2$ & $1$ & $1.001$ & $1.001$ \\ \hline \hline
     & Complexity & $O(n^3)$ & $\geq O(n^3)$ & $O(n)$ \\ \cline{2-5}
     Synthesis & Plant model & Global & Global & $2$-hop \\ \cline{2-5}
     & Redesign & \rev{Global} & \rev{Global} & \rev{Local} \\ \hline \hline
     Implement. & Comm. Speed & $\infty$ & $2$ & $2$ \\ \cline{2-5}
     & Comm. Range & Global & Global & $2$-hop \\ \hline 
     \end{tabular}
\end{center}
\end{table}

We now allow the size of the problem to vary and compare the computation time needed to synthesize a centralized, distributed, and localized $\mathcal{H}_2$ optimal controller. We choose $T = 7$ for the localized controller. The empirical relationship obtained between computation time and problem size for different control schemes is illustrated in Figure \ref{fig:Time}. It is easily seen that the computation time needed for the distributed controller grows rapidly when the size of problem increases. The slope of the line describing the computation time of the centralized controller in the log-log plot of Figure \ref{fig:Time} is $3$, which matches the theoretical complexity $O(n^3)$. The slope for the localized $\mathcal{H}_2$ controller is about $1.4$, which is larger than the theoretical value of $1$. As  mentioned previously, we believe this overhead may be caused by other computational issues, such as memory management. We note that the computational bottleneck faced in computing our large-scale example arises from using a single laptop to compute the controller (and hence the localized subproblems were solved in serial) -- in practice, if each local subsystem is capable of solving its corresponding localized subproblem, our approach scales to systems of arbitrary size as all computations can be done in parallel. For the largest example that we have, we can compute the optimal localized $\mathcal{H}_2$ controller for a system with $12800$ states in $22$ minutes using \rev{MATLAB with an ASUS laptop with Intel i7-3610QM @ 2.3GHz CPU and 8 GB of RAM.} 
If the computation is parallelized across all $6400$ sub-systems, the synthesis algorithm can be completed within $0.2$ seconds. In contrast, the theoretical time to compute the centralized $\mathcal{H}_2$ optimal controller for the same example is more than a week.


\subsection{Localized $\mathcal{H}_2$ Optimal Control with Joint Sensor and Actuator Regularization}
We now move back to the $10 \times 10$ mesh example shown in Figure \ref{fig:mesh}. In the previous subsection, we assumed that each subsystem in the power network had a phase sensor, a frequency sensor, and a controllable load. In practice, the installation of these sensors and actuators is expensive: therefore we explore the tradeoff between the closed loop performance of the system and the number of sensors and actuators being used. A challenging problem is to determine the optimal locations of these sensors and actuators due to the combinatorial nature of this task. In this subsection, we apply the regularization for design (RFD) \cite{MC_CDC14} framework to jointly design the localized optimal controller and the optimal locations of sensors and actuators in the power network. This is achieved by solving the localized $\mathcal{H}_2$ optimal control problem with joint sensor and actuator regularization \eqref{eq:h2SA}.

In order to allow for more flexibility in sensor and actuator placement, we increase the localized region of each process and sensor noise from its two-hop neighbors to its four-hop neighbors. This implies that each subsystem $j$ needs to exchange information with up to its four-hop neighbors, and use the plant model restricted to this four-hop neighborhood to synthesize the localized $\mathcal{H}_2$ controller. Similarly, the length of the FIR constraint $\FT$ is increased to $T = 30$. The initial localized $\mathcal{H}_2$ cost is given by $13.3210$, which is a $0.03\%$ degradation with respect to the optimal centralized $\mathcal{H}_2$ controller. We assume that the relative prices between each frequency sensor, PMU, and controllable load are $1$, $100$, and $300$, respectively. This is to model the fact that actuators are typically more expensive than sensors, and that PMUs are typically more expensive than frequency sensors. The price for the same types of sensors and actuators at different locations remains constant.

\begin{figure}[ht!]
      \centering
      \includegraphics[width=0.25\textwidth]{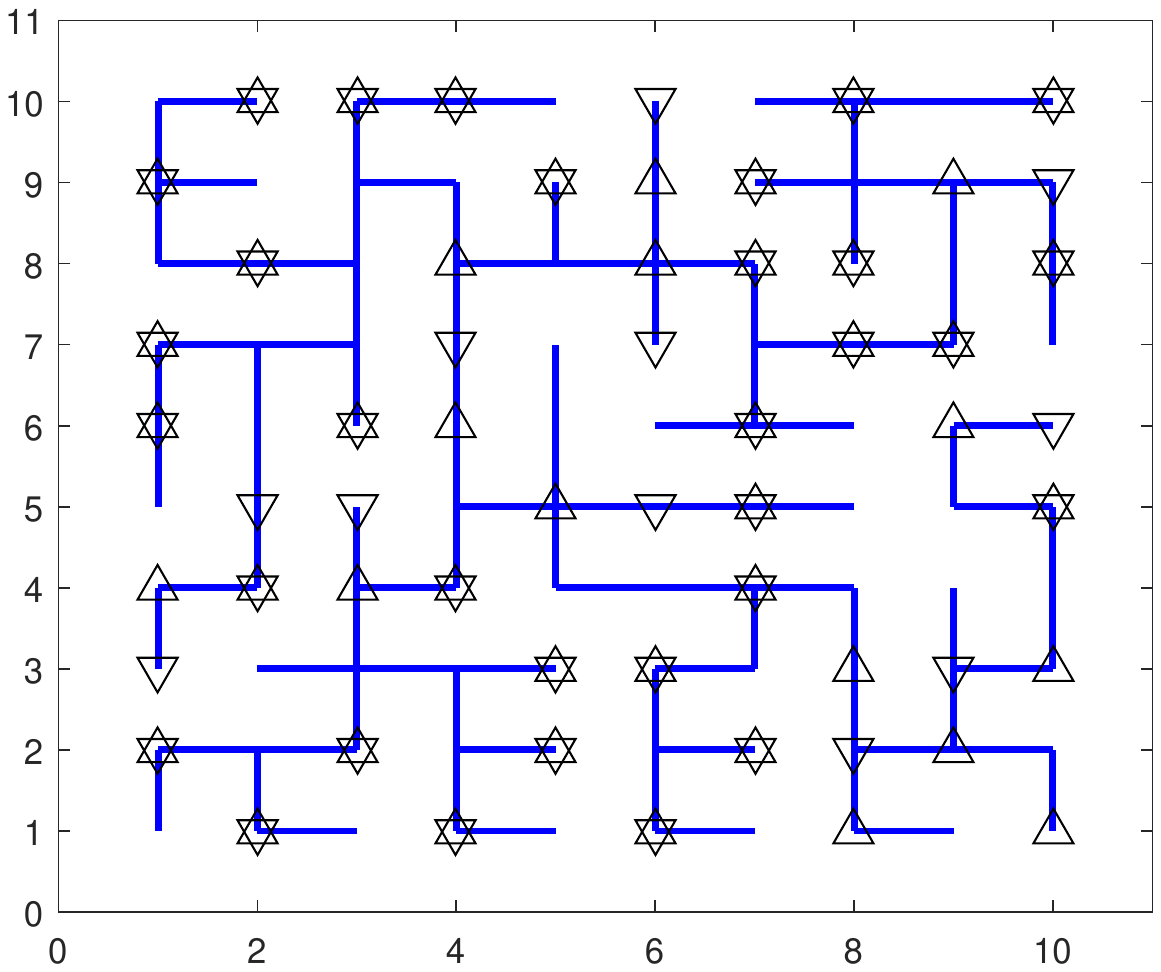}
      \caption{The upward-pointing triangles represent the subsystems in which the PMU is removed. The downward-pointing triangles represent the subsystems in which the controllable load (actuator) is removed.}
      \label{fig:SA}
\end{figure}

We begin with a dense controller architecture composed of $100$ controllable loads, $100$ PMUs, and $100$ frequency sensors, i.e., one of each type of sensor and actuator at each node.  Using optimization problem \eqref{eq:h2SA} to identify the sparsest possible architecture that still satisfies the locality, FIR and communication delay constraints, we are able to remove $43$ controllable loads and $46$ PMUs from the system (no frequency sensors were removed due to the chosen relative pricing). The locations of the removed sensors and actuators are shown in Figure \ref{fig:SA}. We argue that this sensing and actuation interface is very sparse. In particular, we only use $57$ controllable loads to control process noise from $200$ states and sensor noise from $154$ states, while ensuring that the system response to {all} process and sensor disturbances remains both localized and FIR.

The localized $\mathcal{H}_2$ cost for the system with reduced number of sensors and actuators is given by $17.8620$. In comparison, the cost achieved by a proper centralized optimal controller is $16.2280$, and the cost achieved by a strictly proper centralized optimal controller is $18.4707$. Note that as the sensing and actuation interface becomes sparser, the performance gap between the centralized and the localized controller becomes larger. 
Nevertheless, we note that the performance degradation is only $10\%$ compared to the proper centralized optimal scheme implemented using the same sparse controller architecture.

\subsection{Localized Mixed $\mathcal{H}_2/\mathcal{L}_1$ Optimal Control}
Finally, we solve the localized mixed $\mathcal{H}_2/\mathcal{L}_1$ optimal control problem in \eqref{eq:h2l1} on the $10 \times 10$ mesh example shown in Figure \ref{fig:mesh}. We progressively reduce the allowable $\mathcal{L}_1$ gain $\gamma$, as shown in equation \eqref{eq:h2l1}, to explore the tradeoff between average and worst-case performance of a system. We plot the normalized $\mathcal{H}_2$ norm and the normalized $\mathcal{L}_1$ norm in Figure \ref{fig:H2L1}.\footnote{We normalize the $\mathcal{H}_2$ performance with respect to that achieved by the optimal localized $\mathcal{H}_2$ controller, and likewise normalize the $\mathcal{L}_1$ performance with respect to that achieved by the optimal localized $\mathcal{L}_1$ controller.} The left-top point in Figure \ref{fig:H2L1} is the localized $\mathcal{H}_2$ solution. When we start reducing the $\mathcal{L}_1$ sublevel set, the $\mathcal{H}_2$ norm of the closed loop response gradually increases, thus tracing out a tradeoff curve. 


\begin{figure}[ht!]
      \centering
      \includegraphics[width=0.35\textwidth]{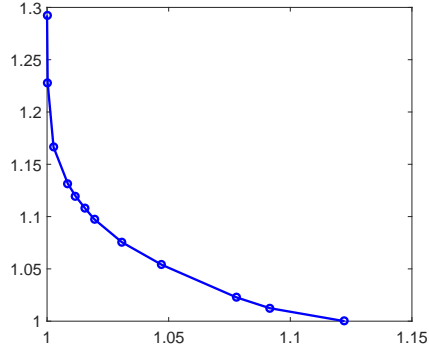}
      \caption{The vertical axis represents the normalized $\mathcal{L}_1$ norm of the closed loop, and the horizontal axis represents the normalized $\mathcal{H}_2$ norm of the closed loop.}
      \label{fig:H2L1}
\end{figure}

\subsection{Summary}
In this section, we extended many of the state-feedback results from Section \ref{sec:state_feedback_SLS} to the output feedback setting.  In particular, we showed that by exploiting the partial separability of many SLS problems of interest, output feedback problems also enjoy improved the scalability via a decomposition and dimensionality reduction based method combined with distributed optimization methods such as ADMM.  Lacking in the output feedback setting however is an analogous family of robustness results to those presented in Section \ref{sec:robust}.  Although preliminary results exist \cite{boczar2018finite}, this remains in important direction for future work.

\section{Conclusions}
\label{sec:conclusion}
This article reviewed the System Level Synthesis framework.  Our aim was to provide a self-contained and accessible resource that summarizes  progress in developing this novel approach to controller synthesis.  We highlighted the benefits that SLS provides in the context of distributed control and in robust control.  As we hope we were able to show, by working directly with system responses, there is a transparency in how system constraints, structure, and uncertainty affect controller synthesis, implementation, and performance, and it is this transparency that we exploit throughout to improve upon the state-of-the-art.  As lengthy as this article is, we believe that we have just begun to scratch the surface of what SLS can do.  Exciting current directions of research focus upon integrating SLS into model predictive control algorithms, further understanding the algebraic structure underlying localized controllers and their state-space realizations, further developing robust SLS theory (especially in the output feedback setting), and applying these tools to application areas spanning power-systems, computer networking, and machine learning/AI.

\bibliographystyle{IEEEtran}
\bibliography{Distributed_new,lqr}

\begin{thebibliography}{10}
\providecommand{\url}[1]{#1}
\csname url@samestyle\endcsname
\providecommand{\newblock}{\relax}
\providecommand{\bibinfo}[2]{#2}
\providecommand{\BIBentrySTDinterwordspacing}{\spaceskip=0pt\relax}
\providecommand{\BIBentryALTinterwordstretchfactor}{4}
\providecommand{\BIBentryALTinterwordspacing}{\spaceskip=\fontdimen2\font plus
\BIBentryALTinterwordstretchfactor\fontdimen3\font minus
  \fontdimen4\font\relax}
\providecommand{\BIBforeignlanguage}[2]{{%
\expandafter\ifx\csname l@#1\endcsname\relax
\typeout{** WARNING: IEEEtran.bst: No hyphenation pattern has been}%
\typeout{** loaded for the language `#1'. Using the pattern for}%
\typeout{** the default language instead.}%
\else
\language=\csname l@#1\endcsname
\fi
#2}}
\providecommand{\BIBdecl}{\relax}
\BIBdecl

\bibitem{1976_Youla_parameterization}
D.~C. Youla, H.~A. Jabr, and J.~J.~B. Jr., ``Modern wiener-hopf design of
  optimal controllers-part ii: The multivariable case,'' \emph{Automatic
  Control, IEEE Transactions on}, vol.~21, no.~3, pp. 319--338, 1976.

\bibitem{Factorization}
M.~Vidyasagar, \emph{Control System Synthesis:A Factorization Approach, Part
  II}.\hskip 1em plus 0.5em minus 0.4em\relax Morgan \& Claypool, 2011.

\bibitem{Boyd_closed}
S.~Boyd and C.~Barratt, \emph{Linear controller design: limits of
  performance}.\hskip 1em plus 0.5em minus 0.4em\relax Prentice-Hall, 1991.

\bibitem{Dahleh}
M.~A. Dahleh and I.~J. Diaz-Bobillo, \emph{Control of uncertain systems: a
  linear programming approach}.\hskip 1em plus 0.5em minus 0.4em\relax
  Prentice-Hall, Inc., 1994.

\bibitem{Polynomial}
J.~Y. Ishihara and R.~M. Sales, ``Parametrization of admissible controllers for
  generalized rosenbrock systems,'' in \emph{Decision and Control, 2000.
  Proceedings of the 39th IEEE Conference on}, vol.~5.\hskip 1em plus 0.5em
  minus 0.4em\relax IEEE, 2000, pp. 5014--5019.

\bibitem{Rosenbrock}
H.~H. Rosenbrock and D.~Owens, ``Computer aided control system design,''
  \emph{IEEE Transactions on Systems, Man, and Cybernetics}, vol.~6, no.~11,
  pp. 794--794, 1976.

\bibitem{Willems}
J.~C. Willems and J.~W. Polderman, \emph{Introduction to mathematical systems
  theory: a behavioral approach}.\hskip 1em plus 0.5em minus 0.4em\relax
  Springer Science \& Business Media, 2013, vol.~26.

\bibitem{Behavior_I}
J.~C. Willems and H.~L. Trentelman, ``Synthesis of dissipative systems using
  quadratic differential forms: Part i,'' \emph{IEEE Transactions on Automatic
  Control}, vol.~47, no.~1, pp. 53--69, 2002.

\bibitem{Behavior_II}
H.~L. Trentelman and J.~C. Willems, ``Synthesis of dissipative systems using
  quadratic differential forms: Part ii,'' \emph{IEEE Transactions on Automatic
  Control}, vol.~47, no.~1, pp. 70--86, 2002.

\bibitem{Behavior_para}
C.~Praagman, H.~L. Trentelman, and R.~Z. Yoe, ``On the parametrization of all
  regularly implementing and stabilizing controllers,'' \emph{SIAM Journal on
  Control and Optimization}, vol.~45, no.~6, pp. 2035--2053, 2007.

\bibitem{1989_DGKF}
J.~C. Doyle, K.~Glover, P.~P. Khargonekar, and B.~A. Francis, ``State-space
  solutions to standard ${H}_2$ and ${H}_\infty$ control problems,'' \emph{IEEE
  Transactions on Automatic Control}, vol.~34, no.~8, pp. 831--847, Aug 1989.

\bibitem{dahleh1994control}
M.~A. Dahleh and I.~J. Diaz-Bobillo, \emph{{Control of Uncertain Systems: A
  Linear Programming Approach}}, 1994.

\bibitem{ZDGBook}
K.~Zhou, J.~C. Doyle, and K.~Glover, \emph{Robust and Optimal Control}, 1995.

\bibitem{doyle-ssv}
J.~Doyle, ``{Analysis of feedback systems with structured uncertainties},''
  \emph{IEE Proceedings D - Control Theory and Applications}, vol. 129, no.~6,
  1982.

\bibitem{mu-mixed}
M.~K.~H. Fan, A.~L. Tits, and J.~C. Doyle, ``Robustness in the presence of
  mixed parametric uncertainty and unmodeled dynamics,'' \emph{IEEE
  Transactions on Automatic Control}, vol.~36, no.~1, 1991.

\bibitem{mu-complex}
A.~Packard and J.~Doyle, ``{The Complex Structured Singular Value},''
  \emph{Automatica}, vol.~29, no.~1, 1993.

\bibitem{mu-real}
P.~M. Young, M.~P. Newlin, and J.~C. Doyle, ``{$\mu$ Analysis with Real
  Parametric Uncertainty},'' in \emph{IEEE Conference on Decision and Control},
  1991.

\bibitem{megretski1997system}
A.~Megretski and A.~Rantzer, ``System analysis via integral quadratic
  constraints,'' \emph{IEEE Transactions on Automatic Control}, vol.~42, no.~6,
  1997.

\bibitem{learning-lqr}
S.~Dean, H.~Mania, N.~Matni, B.~Recht, and S.~Tu, ``On the sample complexity of
  the linear quadratic regulator,'' \emph{arXiv preprint arXiv:1710.01688},
  2017.

\bibitem{dean18safe}
S.~Dean, S.~Tu, N.~Matni, and B.~Recht, ``{Safely Learning to Control the
  Constrained Linear Quadratic Regulator.}'' \emph{arXiv:1809.10121}, 2018.

\bibitem{dean18regret}
S.~Dean, H.~Mania, N.~Matni, B.~Recht, and S.~Tu, ``{Regret Bounds for Robust
  Adaptive Control of the Linear Quadratic Regulator},'' in \emph{Neural
  Information Processing Systems}, 2018.

\bibitem{boczar2018finite}
R.~Boczar, N.~Matni, and B.~Recht, ``Finite-data performance guarantees for the
  output-feedback control of an unknown system,'' \emph{arXiv preprint
  arXiv:1803.09186}, 2018.

\bibitem{1972_Ho_info}
Y.-C. Ho and K.-C. Chu, ``Team decision theory and information structures in
  optimal control problems--part i,'' \emph{Automatic Control, IEEE
  Transactions on}, vol.~17, no.~1, pp. 15--22, 1972.

\bibitem{2012_Mahajan_Info_survey}
A.~Mahajan, N.~Martins, M.~Rotkowitz, and S.~Yuksel, ``Information structures
  in optimal decentralized control,'' in \emph{Decision and Control (CDC), 2012
  IEEE 51st Annual Conference on}, 2012, pp. 1291--1306.

\bibitem{2006_Rotkowitz_QI_TAC}
M.~Rotkowitz and S.~Lall, ``A characterization of convex problems in
  decentralized control,'' \emph{Automatic Control, IEEE Transactions on},
  vol.~51, no.~2, pp. 274--286, 2006.

\bibitem{2002_Bamieh_spatially_invariant}
B.~Bamieh, F.~Paganini, and M.~A. Dahleh, ``Distributed control of spatially
  invariant systems,'' \emph{Automatic Control, IEEE Transactions on}, vol.~47,
  no.~7, pp. 1091--1107, 2002.

\bibitem{2005_Bamieh_spatially_invariant}
B.~Bamieh and P.~G. Voulgaris, ``A convex characterization of distributed
  control problems in spatially invariant systems with communication
  constraints,'' \emph{Systems \& Control Letters}, vol.~54, no.~6, pp.
  575--583, 2005.

\bibitem{2013_Nayyar_common_info}
A.~Nayyar, A.~Mahajan, and D.~Teneketzis, ``Decentralized stochastic control
  with partial history sharing: A common information approach,'' \emph{IEEE
  Transactions on Automatic Control}, vol.~58, no.~7, pp. 1644--1658, July
  2013.

\bibitem{1968_Witsenhausen_counterexample}
H.~S. Witsenhausen, ``A counterexample in stochastic optimum control,''
  \emph{SIAM Journal of Control}, vol.~6, no.~1, 1968.

\bibitem{1984_Tsitsiklis_NP_hard}
J.~N. Tsitsiklis and M.~Athans, ``On the complexity of decentralized decision
  making and detection problems,'' in \emph{IEEE Conference on Decision and
  Control (CDC)}, 1984.

\bibitem{2004_Dullerud}
G.~E. Dullerud and R.~D'Andrea, ``Distributed control of heterogeneous
  systems,'' \emph{Automatic Control, IEEE Transactions on}, vol.~49, no.~12,
  pp. 2113--2128, 2004.

\bibitem{2010_Rotkowitz_QI_delay}
M.~Rotkowitz, R.~Cogill, and S.~Lall, ``Convexity of optimal control over
  networks with delays and arbitrary topology,'' \emph{Int. J. Syst., Control
  Commun.}, vol.~2, no. 1/2/3, pp. 30--54, Jan. 2010.

\bibitem{2014_Lessard_convexity}
\BIBentryALTinterwordspacing
L.~Lessard and S.~Lall, ``Convexity of decentralized controller synthesis,''
  \emph{IEEE Transactions on Automatic Control, To appear}, 2016. [Online].
  Available: \url{http://arxiv.org/pdf/1305.5859v2.pdf}
\BIBentrySTDinterwordspacing

\bibitem{2012_Lessard_two_player}
------, ``Optimal controller synthesis for the decentralized two-player problem
  with output feedback,'' in \emph{2012 IEEE American Control Conference}, June
  2012.

\bibitem{2010_Shah_H2_poset}
P.~Shah and P.~A. Parrilo, ``$\mathcal{H}_2$-optimal decentralized control over
  posets: A state space solution for state-feedback,'' in \emph{Decision and
  Control (CDC), 2010 49th IEEE Conference on}, 2010.

\bibitem{2013_Lamperski_H2}
A.~Lamperski and J.~C. Doyle, ``Output feedback $\mathcal{H}_2$ model matching
  for decentralized systems with delays,'' in \emph{2013 IEEE American Control
  Conference}, June 2013.

\bibitem{2013_Lessard_structure}
L.~Lessard, M.~Kristalny, and A.~Rantzer, ``On structured realizability and
  stabilizability of linear systems,'' in \emph{American Control Conference ,
  2013}, June 2013, pp. 5784--5790.

\bibitem{2013_Scherer_Hinf}
C.~W. Scherer, ``Structured $\mathcal{H}_{\infty}$-optimal control for nested
  interconnections: A state-space solution,'' \emph{Systems and Control
  Letters}, vol.~62, pp. 1105--1113, 2013.

\bibitem{2014_Lessard_Hinf}
L.~Lessard, ``State-space solution to a minimum-entropy
  $\mathcal{H}_{\infty}$-optimal control problem with a nested information
  constraint,'' in \emph{2014 IEEE Conference on Decision and Control (CDC)},
  2014.

\bibitem{2014_Matni_Hinf}
N.~Matni, ``Distributed control subject to delays satisfying an
  $\mathcal{H}_{\infty}$ norm bound,'' in \emph{2014 IEEE Conference on
  Decision and Control (CDC)}, 2014.

\bibitem{2014_Tanaka_Triangular}
T.~Tanaka and P.~A. Parrilo, ``Optimal output feedback architecture for
  triangular {LQG} problems,'' in \emph{2014 IEEE American Control Conference},
  June 2014.

\bibitem{2014_Lamperski_state}
A.~Lamperski and L.~Lessard, ``Optimal decentralized state-feedback control
  with sparsity and delays,'' \emph{Automatica}, vol.~58, pp. 143--151, 2015.

\bibitem{2014_Wang_ACC}
Y.-S. Wang, N.~Matni, S.~You, and J.~C. Doyle, ``Localized distributed state
  feedback control with communication delays,'' in \emph{2014 IEEE American
  Control Conference}, June 2014.

\bibitem{2014_Wang_CDC}
Y.-S. Wang, N.~Matni, and J.~C. Doyle, ``Localized {LQR} optimal control,'' in
  \emph{2014 53rd IEEE Conference on Decision and Control (CDC)}, 2014.

\bibitem{2014_Wang_Allerton}
Y.-S. Wang and N.~Matni, ``Localized distributed optimal control with output
  feedback and communication delays,'' in \emph{IEEE 52nd Annual Allerton
  Conference on Communication, Control, and Computing}, 2014.

\bibitem{2015_Wang_H2}
------, ``Localized {LQG} optimal control for large-scale systems,'' in
  \emph{2016 IEEE American Control Conference}, 2016.

\bibitem{2015_Wang_Reg}
Y.-S. Wang, N.~Matni, and J.~C. Doyle, ``Localized {LQR} control with actuator
  regularization,'' in \emph{2016 IEEE American Control Conference}, 2016.

\bibitem{2015_Wang_LDKF}
Y.-S. Wang, S.~You, and N.~Matni, ``Localized distributed {K}alman filters for
  large-scale systems,'' in \emph{5th IFAC Workshop on Distributed Estimation
  and Control in Networked Systems}, 2015.

\bibitem{WMD_ACC17}
Y.~S. Wang, N.~Matni, and J.~C. Doyle, ``System level parameterizations,
  constraints and synthesis,'' in \emph{2017 American Control Conference}, May
  2017, pp. 1308--1315.

\bibitem{Virtual}
N.~Matni, Y.-S. Wang, and J.~Anderson, ``Scalable system level synthesis for
  virtually localizable systems,'' in \emph{2017 56th IEEE Conf. Decision
  Control}, 2017.

\bibitem{SLStutorial}
J.~C. Doyle, N.~Matni, Y.-S. Wang, J.~Anderson, and S.~Low, ``System level
  synthesis: A tutorial,'' in \emph{Decision and Control (CDC), 2017 IEEE 56th
  Annual Conference on}.\hskip 1em plus 0.5em minus 0.4em\relax IEEE, 2017, pp.
  2856--2867.

\bibitem{Thesis}
Y.-S. Wang, ``A system level approach to optimal controller design for
  large-scale distributed systems,'' Ph.D. dissertation, California Institute
  of Technology, 2016.

\bibitem{anderson17}
J.~Anderson and N.~Matni, ``{Structured State Space Realizations for SLS
  Distributed Controllers},'' in \emph{Allerton}, 2017.

\bibitem{SysLevelSyn1}
Y.-S. Wang, N.~Matni, and J.~C. Doyle, ``A system level approach to controller
  synthesis,'' \emph{IEEE Transactions on Automatic Control}, 2019.

\bibitem{SysLevelSyn2}
------, ``Separable and localized system level synthesis for large-scale
  systems,'' \emph{IEEE Transactions on Automatic Control}, 2018.

\bibitem{CheA19}
Y.~Chen and J.~Anderson, ``System level synthesis with state and input
  constraints,'' \emph{arXiv:1903.07174}, 2019.

\bibitem{DeaTMR18}
S.~Dean, S.~Tu, N.~Matni, and B.~Recht, ``Safely learning to control the
  constrained linear quadratic regulator,'' \emph{arXiv:1809.10121}, 2018.

\bibitem{Zhou1996robust}
K.~Zhou, J.~C. Doyle, and K.~Glover, \emph{Robust and optimal control}.\hskip
  1em plus 0.5em minus 0.4em\relax Prentice Hall New Jersey, 1996.

\bibitem{2011_QIN}
L.~Lessard and S.~Lall, ``Quadratic invariance is necessary and sufficient for
  convexity,'' in \emph{Proceedings of the 2011 American Control Conference},
  June 2011, pp. 5360--5362.

\bibitem{2014_Sabau_QI}
{\c S}.~Sab{\u a}u and N.~C. Martins, ``Youla-like parametrizations subject to
  {QI} subspace constraints,'' \emph{Automatic Control, IEEE Transactions on},
  vol.~59, no.~6, pp. 1411--1422, 2014.

\bibitem{MickeyThesis}
Y.-S. Wang, ``A system level approach to optimal controller design for
  large-scale distributed systems,'' Ph.D. dissertation, California Institute
  of Technology, 2017.

\bibitem{Ran18}
A.~Rantzer, ``Control synthesis with localizability and passivity
  constraints,'' \emph{arXiv:1812.07748}, 2018.

\bibitem{JenB18}
E.~Jensen and B.~Bamieh, ``{Optimal spatially-invariant controllers with
  locality constraints: A system level approach},'' in \emph{Proceedings of the
  2018 Annual American Control Conference}.\hskip 1em plus 0.5em minus
  0.4em\relax IEEE, 2018, pp. 2053--2058.

\bibitem{AndMC19}
J.~Anderson, N.~Matni, and Y.~Chen, ``Sparsity preserving discretization with
  error bounds,'' \emph{arXiv:1903.11267}, 2019.

\bibitem{IMC1}
D.~E. Rivera, M.~Morari, and S.~Skogestad, ``Internal model control: Pid
  controller design,'' \emph{Industrial \& engineering chemistry process design
  and development}, vol.~25, no.~1, pp. 252--265, 1986.

\bibitem{IMC2}
C.~E. Garcia and M.~Morari, ``Internal model control. a unifying review and
  some new results,'' \emph{Industrial \& Engineering Chemistry Process Design
  and Development}, vol.~21, no.~2, pp. 308--323, 1982.

\bibitem{BoydADMM}
S.~P. Boyd, N.~Parikh, E.~Chu, B.~Peleato, and J.~Eckstein, ``Distributed
  optimization and statistical learning via the alternating direction method of
  multipliers,'' \emph{Foundations and Trends in Machine Learning}, vol.~3,
  no.~1, pp. 1--122, 2011.

\bibitem{MC_CDC14}
N.~Matni and V.~Chandrasekaran, ``Regularization for design,'' \emph{IEEE
  Transactions on Automatic Control}, vol.~61, no.~12, pp. 3991--4006, Dec
  2016.

\bibitem{2006_grouplasso}
M.~Yuan and Y.~Lin, ``Model selection and estimation in regression with grouped
  variables,'' \emph{Journal of the Royal Statistical Society: Series B
  (Statistical Methodology)}, vol.~68, no.~1, pp. 49--67, 2006.

\bibitem{Munther_L1}
M.~Dahleh and J.~Pearson, J., ``$l^{1}$-optimal feedback controllers for {MIMO}
  discrete-time systems,'' \emph{Automatic Control, IEEE Transactions on},
  vol.~32, no.~4, pp. 314--322, Apr 1987.

\bibitem{proximal}
\BIBentryALTinterwordspacing
P.~L. Combettes and J.-C. Pesquet, ``Proximal splitting methods in signal
  processing,'' \emph{arXiv preprint}, vol. arXiv:0912.3522, 2010. [Online].
  Available: \url{http://arxiv.org/abs/0912.3522v4.pdf}
\BIBentrySTDinterwordspacing

\bibitem{proximalBoyd}
N.~Parikh and S.~Boyd, ``Proximal algorithms,'' \emph{Found. Trends Optim.},
  vol.~1, no.~3, pp. 127--239, Jan. 2014.

\bibitem{AMA}
P.~Tseng, ``Applications of a splitting algorithm to decomposition in convex
  programming and variational inequalities,'' \emph{SIAM Journal on Control and
  Optimization}, vol.~29, no.~1, pp. 119--138, 1991.

\bibitem{2014_Lamperski_H2_journal}
A.~Lamperski and J.~C. Doyle, ``The $\mathcal{H}_2$ control problem for
  quadratically invariant systems with delays,'' \emph{Automatic Control, IEEE
  Transactions on}, vol.~60, no.~7, pp. 1945--1950, 2015.

\end{thebibliography}

\end{document}